\title{The Cantor-Bernstein-Schr\"oder theorem\\ in algebra}
\author{{\sc Hector Freytes}}
\date{{\small
1. Dipartimento di Filosofia, Universit\'a di Cagliari\\ Via Is Mirionis I, Cagliari - Italia.\\
2. Departamento de Matem\'{a}tica UNR-CONICET \\ Av. Pellegrini 250, CP 2000 Rosario - Argentina.\\
email: hfreytes@gmail.com}}
\begin{document}

\bibliographystyle{plain}

\maketitle

\begin{abstract}
\noindent
The famous Cantor-Bernstein-Schr\"oder theorem (CBS-theorem for short) of set theory was generalized by Sikorski and Tarski to $\sigma$-complete Boolean algebras. After this, 
numerous generalizations of the CBS-theorem, extending the Sikorski-Tarski version to different classes of algebras, have been established. Among these classes there are lattice ordered groups, orthomodular lattices, MV-algebras, residuated lattices, etc. This suggests to consider a common algebraic framework in which various versions of the CBS-theorem can be formulated. In this work we provide this algebraic framework establishing necessary and sufficient conditions for the validity of the theorem. We also show how this abstract framework includes the versions of the CBS-theorem already present in the literature as well as new versions of the theorem extended to other classes such as groups, modules, semigroups, rings, $*$-rings etc.  
\end{abstract}

\begin{small}

{\em Keywords: Cantor-Bernstein-Schr\"oder theorem, presheaves, factor congruences}

{\em Mathematics Subject Classification 2000: 06B99, 08B99.}

\end{small}

\bibliography{pom}

\newtheorem{theo}{Theorem}[section]

\newtheorem{definition}[theo]{Definition}

\newtheorem{lem}[theo]{Lemma}

\newtheorem{met}[theo]{Method}

\newtheorem{prop}[theo]{Proposition}

\newtheorem{coro}[theo]{Corollary}

\newtheorem{exam}[theo]{Example}

\newtheorem{rema}[theo]{Remark}{\hspace*{4mm}}

\newtheorem{example}[theo]{Example}

\newcommand{\proof}{\noindent {\em Proof:\/}{\hspace*{4mm}}}

\newcommand{\qed}{\hfill$\Box$}

\newcommand{\ninv}{\mathord{\sim}} 

\section*{Introduction}\label{INTRO}

The famous Cantor-Bernstein-Schr\"oder  theorem of the set theory states that, 
\begin{enumerate}
\item[] ``{\it if a set $X$ can be embedded into a set $Y$ and vice versa, then there is a one-to-one function of $X$ onto $Y$}''. 
\end{enumerate}
 The history of this theorem is rather curious. The earliest record of the theorem might be a letter to Dedekind dated 5 november 1882 where Cantor conjectured the theorem. Dedekind proved it in 1887 but did not publish it. His proof was printed only in his collected works in 1932.  Schr\"oder proved the theorem in 1894 but he published it in 1898 \cite{SCH1, SCH2}. However Schr\"oder's proof was defective. Korselt wrote to Schr\"oder about the error in 1902 and few weeks later he sent a proof of the theorem to the Mathematische Annalen. Korselt paper appeared in 1911 \cite{KOR}. Bernstein, a 19 years old Cantor student, proved the theorem. His proof find its way to the public through  Borel because Cantor showed the proof to Borel in the 1897 during the International Congress of Mathematicians in Z\"{u}rich. The Bernstein proof was published in 1898 in the appendix of a Borel book \cite{Borel} and in 1901 Bernstein's thesis appeared with his proof.  Several years later, at the end of the forties, Sikorski \cite{Si} and independently Tarski \cite{Ta}, showed that the $CBS$-theorem is a particular case of a statement on $\sigma$-complete Boolean algebras. Following this idea, several authors have extended the Sikorski-Tarski version to classes of algebras more general than Boolean algebras. Among these classes there are
lattice ordered groups \cite{J2},  $MV$-algebras \cite{DMN,J0}, orthomodular lattices \cite{DMP}, effect algebras \cite{JEN}, pseudo effect algebras \cite{DVU4}, pseudo $MV$-algebras \cite{J}, pseudo $BCK$-algebras \cite{KU1} and in general, algebras with an underlying lattice structure such that the central elements of this lattice determine a direct decomposition of the algebra \cite{FR}. It suggests that the CBS-theorem can be formulated in a common algebraic framework from which all the versions of the theorem mentioned above stem. Moreover, this framework  also yields new versions of the CBS-theorem, applied to other algebraic structures. 

In this work we provide a such common algebraic framework. It consists of a category ${\cal A}$ of algebras of the same type and a presheaf, called {\it congruences presheaf}, acting on the congruence lattice of each algebra of ${\cal A}$. In this way, necessary and sufficient conditions for the validity of the CBS-theorem are given in terms of properties that certain algebras in ${\cal A}$ should satisfy with respect to the congruence presheaf.

The paper is structured as follows. Section \ref{Basic Notions} contains generalities on lattice theory, universal algebra and some technical results that are used in subsequent sections. In Section \ref{AFCBS} the crucial notion of congruences presheaf is introduced and the abstract framework for the CBS-theorem is provided. A necessary and sufficient condition for the validity of the CBS-theorem is given. Quasi-cyclic groups are studied as an example of algebras satisfying the CBS-theorem. In Section \ref{FCPRES} a congruences presheaf related to factor congruences is introduced and a CBS-theorem respect to this special presheaf is established. Injective modules and divisible groups  are studied as examples of algebras satisfying the CBS-theorem. A useful  necessary and sufficient condition for the validity of the CBS-theorem, restricted to this particular congruences presheaf, is also provided. In Section \ref{BFCFC} our abstract version of the CBS-theorem is studied in categories of algebras having Boolean factor congruences (BFC). This particular framework allow us to consider  versions of the theorem extended to algebras with an underlying lattice structure as lattice ordered groups, orthomodular lattices, residuated lattices, \L ukasiewicz and Post algebras; semigroups with $0,1$, bounded semilattices, commutative pseudo BCK-algebras, rings with unity, $*$-rings etc. Finally, we extend our abstract framework to two categories of algebras defined by partial operations.

\section{Basic Notions}\label{Basic Notions}
We recall from \cite{Bir, Bur, MM} some notions basic about lattice theory and universal algebra that will play an important role in what follows. Let $\langle L, \leq \rangle$ be an ordered set.  An interval {\rm [{\it a,b}]} of $L$ is defined as the set $\{\,x\in A : a\leq x\leq b \}$. $L$ is called {\it bounded} provided it has a smallest element $0$ and a greatest element $1$. Let $L$ be a bounded ordered set. A subset $X$ of $L$ is {\it orthogonal} ({\it dual orthogonal}) iff $x\land y = 0$ ($x\lor y = 1$) whenever $x,y$ are distinct elements of $X$. 

Let $\langle L,\lor,\land\rangle$ be a lattice.  Given $a, b, c$ in $L$, we write: $(a,b,c)D$  iff $(a\lor b)\land c = (a\land c)\lor (b\land c)$; $(a,b,c)D^{*}$ iff $(a\land b)\lor c = (a\lor c)\land (b\lor c)$ and, $(a,b,c)T$   iff $(a,b,c)D$, (a,b,c)$D^{*}$ hold for all permutations of $a, b, c$. An element $z$ of a lattice $L$ is called a {\it neutral element} iff for all elements $a,b\in L$ we have\ $(a,b,z)T$. The lattice $L$ is {\it$\sigma$-complete} iff $L$ admits denumerable supremum and denumerable infimum. In particular, $L$ is said to be {\it orthogonally $\sigma$-complete} ({\it dual orthogonal $\sigma$-complete}) iff every denumerable orthogonal (dual orthogonal) subset of $L$ has supremum (infimum) in $L$. 

Let $\langle L,\lor,\land,0,1\rangle$ be a bounded lattice. A {\it complement} of an element $a\in L$ is an element $\neg a\in L$ such that $a\lor \neg a = 1$ and $a\land \neg a = 0$. $L$ is called {\it complemented} when every element of $L$ has a complement. $L$ is a {\it Boolean algebra} iff it is a complemented distributive lattice. If $L$ is a Boolean algebra then the complement of an element $a$ is unique. An element $z \in L$ is called a {\it central element} iff $z$ is a neutral element having a complement. The set of all central elements of $L$ is called the {\it center} of $L$ and it is denoted by $Z(L)$. The center $Z(L)$ is a Boolean sublattice of $L$ \cite[Theorem 4.15]{MM}.

\begin{prop}\label{CENTER}{\rm  \cite[Proposition 3.1]{FR}}
Let $L$ be a bounded lattice and $z\in Z(L)$. Then,
\begin{enumerate}

\item
$Z(L)\cap [z,1] = Z([z,1])$,

\item
$[z,1]_L = \langle [z,1], \lor, \land, \neg_z, z, 1 \rangle$ where $\neg_z x = z \lor \neg x $, is a Boolean algebra. 
\end{enumerate}

\qed
\end{prop}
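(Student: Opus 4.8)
The plan is to reduce everything to the direct product decomposition that the central element $z$ induces on $L$. Since $z$ is neutral and has a complement $\neg z$, the map $\varphi\colon L\to [0,z]\times[z,1]$ given by $\varphi(x)=(x\land z,\,x\lor z)$ is a lattice isomorphism, with inverse $(u,w)\mapsto u\lor(w\land\neg z)$; that the two composites are the identity follows from a short computation using only the neutrality of $z$ (to distribute $\land z$ and $\lor z$ across meets and joins) together with $z\land\neg z=0$ and $z\lor\neg z=1$. Once this isomorphism is in hand, $[z,1]$ appears as a direct factor of $L$ and the whole statement becomes a transfer of centrality across that factor.

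For the inclusion $Z(L)\cap[z,1]\subseteq Z([z,1])$ I would take $c\in Z(L)$ with $c\geq z$. Neutrality of $c$ is a family of identities holding for all elements of $L$, hence in particular for all elements of the sublattice $[z,1]$, so $c$ is neutral in $[z,1]$. For the complement I would check that $\neg_z c=z\lor\neg c$ is the complement of $c$ in the bounded lattice $[z,1]$: indeed $c\lor(z\lor\neg c)=(c\lor\neg c)\lor z=1$, while $c\land(z\lor\neg c)=(c\land z)\lor(c\land\neg c)=z\lor 0=z$, using that $c$ is neutral and that $c\geq z$ gives $c\land z=z$. Thus $c\in Z([z,1])$, and this computation simultaneously records the complement formula required for the second assertion.

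The reverse inclusion $Z([z,1])\subseteq Z(L)\cap[z,1]$ is the main obstacle, because a priori an element $c\in Z([z,1])$ is only known to be neutral relative to $[z,1]$, not relative to all of $L$. Here the decomposition does the work: under $\varphi$ one has $\varphi(c)=(z,c)$, whose first coordinate $z$ is the top element (hence a neutral element) of $[0,z]$ and whose second coordinate is neutral in $[z,1]$ by hypothesis. Since the identities defining neutrality hold in a product exactly when they hold in each factor, $(z,c)$ is neutral in $[0,z]\times[z,1]$, and therefore $c$ is neutral in $L$. For the $L$-complement, let $c'$ be the complement of $c$ in $[z,1]$, so $c\lor c'=1$ and $c\land c'=z$, and set $d=c'\land\neg z$; then $c\land d=(c\land c')\land\neg z=z\land\neg z=0$, and, using the neutrality of $c$ just established, $c\lor d=(c\lor c')\land(c\lor\neg z)=1\land 1=1$, since $c\lor\neg z\geq z\lor\neg z=1$. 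Hence $c$ is neutral and complemented in $L$, i.e. $c\in Z(L)\cap[z,1]$, which proves the first assertion.

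The second assertion is then immediate. Applying the quoted fact that the center of any bounded lattice is a Boolean sublattice to the bounded lattice $[z,1]$ shows that its center is a Boolean algebra with bounds $z$ and $1$; by the first part this center is exactly $Z(L)\cap[z,1]$, on which the complement computed above is precisely $\neg_z x=z\lor\neg x$. This exhibits the desired Boolean algebra $\langle\,\cdot\,,\lor,\land,\neg_z,z,1\rangle$. The only delicate point throughout is the neutrality transfer in the reverse inclusion; everything else is a direct verification once the product decomposition induced by $z$ is set up.
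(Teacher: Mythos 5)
The paper offers no proof of this proposition at all: it is quoted from \cite[Proposition 3.1]{FR} and closed with a tombstone, so there is no in-paper argument to measure yours against, and your proposal must be judged on its own. On item 1 it holds up completely. The easy inclusion $Z(L)\cap[z,1]\subseteq Z([z,1])$ is done the only way it can be (the neutrality identities restrict to the sublattice, and $z\lor\neg c$ is checked to be a complement in $[z,1]$), and for the substantive inclusion your use of the central decomposition $\varphi\colon L\to[0,z]\times[z,1]$, $\varphi(x)=(x\land z,\,x\lor z)$, is exactly the right tool: neutrality is a coordinate-wise condition in a product, $z$ is the top of $[0,z]$ and hence neutral there, so $\varphi(c)=(z,c)$ is neutral in the product and therefore $c$ is neutral in $L$; your explicit complement $d=c'\land\neg z$ then checks out, the join computation legitimately using the neutrality of $c$ in $L$ that you have just established. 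This is the standard argument for transferring centrality across a direct factor, and I see no error in it.

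What you should not leave implicit concerns item 2. As printed, it asserts that the \emph{whole interval} $[z,1]$, under $\neg_z x=z\lor\neg x$, is a Boolean algebra; for an arbitrary bounded lattice this is false (take $z=0$, which is always central: the claim would force every bounded lattice to be Boolean, and $\neg x$ need not even exist, or be unique, for general $x\in[z,1]$). What your argument actually proves is the corrected statement that $Z([z,1])=Z(L)\cap[z,1]$, with operations $\lor,\land,\neg_z$ and bounds $z,1$, is a Boolean algebra. That is the only reading under which the proposition is true, and it is the one consistent with every later use in the paper, where it is invoked only inside lattices that are already Boolean --- Boolean algebras $A$ in the Sikorski--Tarski statement, or the Boolean sublattices ${\cal K}(A)$ and $Z(Con(A))$ --- so that $Z(L)=L$ and the two readings coincide. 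Your final paragraph, however, silently swaps the interval for its center (``this exhibits the desired Boolean algebra''); state the correction explicitly rather than papering over the mismatch. One further seam to close there: to conclude that the complement operation of the Boolean algebra $Z([z,1])$ is \emph{precisely} $\neg_z$, you need either the standard fact that a neutral element has at most one complement, or the observation that $z\lor\neg c$ itself lies in $Z(L)\cap[z,1]$; without one of these, the Boolean complement supplied by the Maeda--Maeda theorem could a priori differ from the element you computed.
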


Let $\tau$ be a type of algebras and $X$ be a denumerable set of variables such that $\tau \cap X = \emptyset$. We denote by $Term_\tau(X)$ the set of terms built from the set of variables $X$. Each element $t \in Term_\tau(X)$ is referred as a {\it $\tau$-term}. For a $\tau$-term $t$ we often write $t(x_1,x_2 \ldots x_n)$ to indicate that the variables occurring in $t$ are among $x_1,x_2 \ldots x_n$. If $t\in Term_\tau(X)$ and $A$ is an algebra of type $\tau$ then we denote by $t^A$ the interpretation of $t$ in the algebra $A$. A {\it $\tau$-homomorphism} is a function between algebras of type $\tau$ preserving the $\tau$-operations. We write $A\cong_\tau B$ to indicate that there exists a $\tau$-isomorphism between the algebras $A$ and $B$ of type $\tau$.  An equation of type $\tau$ is an expression of the form $s=t$ such that $s,t \in  Term_\tau(X)$ and the symbol $=$ is interpreted as the identity. A quasi equation is an expression of the form $(\&_{i=1}^{n}s_i = t_i) \Longrightarrow s = t$ where $t_i, s_i, s, t \in  Term_\tau(X)$ and $\&_{i=1}^{n}$ denotes a logical $n$-conjunction. 

Let ${\cal A}$ be a class of algebras of type $\tau$. The {\it language of ${\cal A}$} is the first order language with identity built from the set $Term_\tau(X)$. If $\Phi$ is a sentence in the language of ${\cal A}$ and $A \in {\cal A}$ then, $A \models \Phi$ means that {\it $\Phi$ holds in the algebra $A$}. The sentence {\it $\Phi$ holds in the class ${\cal A}$}, abbreviated as ${\cal A} \models \Phi$, iff for each $A\in {\cal A}$, $A \models \Phi$. If $\Sigma$ is a set of sentences in the language of ${\cal A}$ then, $A \models \Sigma$ means that $A \models \Phi$ for each $\Phi \in \Sigma$. The class ${\cal A}$ is a {\it variety} ({\it quasivariety})  iff there exists a set $\Sigma$ of equations  (quasi equations) in the language of ${\cal A}$ such that ${\cal A} = \{A: A \models \Sigma\} $. Equivalently, ${\cal A}$ is a variety iff it is closed under homomorphic images -i.e. quotient algebras-, subalgebras and direct products. While ${\cal A}$ is a quasivariety iff it is closed by subalgebras, direct products but not necessarily under homomorphic images.

Let $A$ be an algebra of type $\tau$. We denote by $Con(A)$ the congruence lattice of $A$. The largest congruence on $A$, given by $A^2$, is denoted by $\nabla_A$ and the smallest, given by the diagonal $\{(a,a):a\in A\}$, is denoted by $\Delta_A$. For $a\in A$ and  $\theta \in Con (A)$, $a_\theta$ denotes the congruence class of $a$ modulo $\theta$. 
Let $\theta_1, \theta_2 \in Con(A)$. Then we say that $\theta_1, \theta_2$ are {\it permutable} iff $\theta_1 \circ \theta_2 = \theta_2 \circ \theta_1$ where $\circ$ is the relational product defined as $\theta_1 \circ \theta_2 = \{(x,y) \in A^2: \exists w\in A, \mbox{with $(x,w) \in \theta_1$ and $(w,y) \in \theta_2$}\}$. In \cite[Theorem 5.9]{Bur} it is proved that
$\theta_1, \theta_2$ are permutable iff $\theta \lor \sigma = \theta \circ \sigma$. Let $\sigma \in Con(A)$. If $\theta \in [\sigma, \nabla_A]$ then, 
\begin{equation}\label{QUOTCONG}
\theta / \sigma = \{(x_\sigma, y_\sigma) \in (A/\sigma)^2: (x,y) \in \theta \} 
\end{equation}
is a congruence on $A/\sigma$. The following results will be quite important in the next sections:

\begin{theo}\label{CORRESP}
Let $A$ be an algebra of type $\tau$ and $\sigma \in Con(A)$. Then:

\begin{enumerate}

\item
If $\sigma \subseteq \theta$ then, $f:(A/\sigma)/(\theta/\sigma) \rightarrow (A/\theta)$ such that $f({a_{\sigma}}_{(\theta/\sigma)}) = a_{\theta}$ is a $\tau$-isomorphism.  

\item
$u_\sigma: [\sigma, \nabla_A]\rightarrow Con(A/\sigma)$ such that $u_\sigma(\theta)= \theta /\sigma$ is a lattice isomorphism.

\item
If $\sigma \subseteq \theta_1$ and $\sigma \subseteq \theta_2$ then, $(a,b) \in \theta_1 \circ \theta_2$ iff  $(a_\sigma,b_\sigma) \in \theta_1/\sigma \circ \theta_2/\sigma$.

\end{enumerate}

\end{theo}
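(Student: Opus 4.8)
The plan is to prove the three items as instances of the standard correspondence and isomorphism machinery, organized around the canonical projection $\pi_\sigma \colon A \to A/\sigma$, $\pi_\sigma(a)=a_\sigma$, which is a surjective $\tau$-homomorphism with kernel $\sigma$. For item (1) I would establish that $f$ is simultaneously well defined, injective, and surjective by reading off the chain of equivalences
\[
{a_\sigma}_{(\theta/\sigma)} = {b_\sigma}_{(\theta/\sigma)} \iff (a_\sigma,b_\sigma)\in\theta/\sigma \iff (a,b)\in\theta \iff a_\theta = b_\theta,
\]
in which the central step is precisely the defining property (\ref{QUOTCONG}) of $\theta/\sigma$. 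Left-to-right gives well-definedness, right-to-left gives injectivity, and surjectivity is immediate since $a_\theta = f({a_\sigma}_{(\theta/\sigma)})$. That $f$ preserves each operation symbol $g\in\tau$ then follows by unwinding the three nested quotients and using that each projection is a homomorphism, so beyond the bookkeeping of the layered congruence classes no genuine computation is required.

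For item (2) I would exhibit an explicit inverse. Given $\psi \in Con(A/\sigma)$, set $\psi^{\ast} = \{(a,b)\in A^2 : (a_\sigma,b_\sigma)\in\psi\}$, the $\pi_\sigma$-preimage of $\psi$; this is a congruence on $A$, and since $\psi$ is reflexive it contains $\sigma$, whence $\psi^{\ast}\in[\sigma,\nabla_A]$. I would then verify that $\theta\mapsto\theta/\sigma$ and $\psi\mapsto\psi^{\ast}$ are mutually inverse, again using (\ref{QUOTCONG}) together with $\sigma\subseteq\theta$ to recover $\theta=(\theta/\sigma)^{\ast}$. Both maps are visibly order preserving in both directions, so $u_\sigma$ is an order isomorphism between the two lattices; and an order isomorphism automatically preserves meets and joins, hence is a lattice isomorphism.

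Item (3) is a direct computation with relational products. For the forward direction, a witness $w$ for $(a,b)\in\theta_1\circ\theta_2$ projects to a witness $w_\sigma$ for $(a_\sigma,b_\sigma)\in\theta_1/\sigma\circ\theta_2/\sigma$. For the converse, a witness $c_\sigma$ lifts to an element $c\in A$, and here the hypotheses $\sigma\subseteq\theta_1$ and $\sigma\subseteq\theta_2$ are exactly what guarantee that $(a_\sigma,c_\sigma)\in\theta_1/\sigma$ yields $(a,c)\in\theta_1$, since any two representatives of one $\sigma$-class are already $\theta_i$-related; the same applies to $\theta_2$, so $c$ witnesses $(a,b)\in\theta_1\circ\theta_2$. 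I expect the only delicate point of the whole argument to be this final lifting step: without the inclusions $\sigma\subseteq\theta_i$ the passage from $(a_\sigma,c_\sigma)\in\theta_i/\sigma$ back to $(a,c)\in\theta_i$ could fail, so the main task is to make the role of these inclusions explicit rather than to carry out any hard calculation.
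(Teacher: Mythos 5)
Your proposal is correct and takes essentially the same route as the paper: for items (1) and (2) the paper simply cites the standard Second Isomorphism and Correspondence Theorems from Burris--Sankappanavar, and your arguments are precisely those standard proofs, while for item (3) your computation coincides with the paper's chain of equivalences. The only (welcome) difference is that you make explicit the lifting of the witness $c_\sigma$ to $c\in A$ and the exact role of the inclusions $\sigma\subseteq\theta_1$ and $\sigma\subseteq\theta_2$, which the paper's compressed ``iff'' chain leaves implicit.
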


\begin{proof}
1) See {\rm  \cite[Theorem 6.15]{Bur}}. 2) See {\rm \cite[Theorem 6.20]{Bur}}. 3) $(a,b) \in \theta_1 \circ \theta_2$ iff there exists $c\in A$ such that $(a,c) \in \theta_1$ and $(c,b) \in  \theta_2 $ iff $(a_\sigma,c_\sigma) \in \theta_1/\sigma$ and $(c_\sigma,b_\sigma) \in \theta_2/\sigma$ iff $(a_\sigma, b_\sigma) \in \theta_1/\sigma \circ \theta_2/\sigma$.

\qed
\end{proof}

A congruence $\theta$ on $A$ is a {\it factor congruence} iff there exists $\neg \theta \in Con(A)$, referred to as a {\it a factor complement of $\theta$}, such that $\theta \cap \neg \theta = \Delta_A$, $\theta \lor \neg \theta = \nabla_A$ and $\theta$ permutes with $\neg \theta$ (or equivalently, by \cite[Theorem 5.9]{Bur}, $\theta \cap \neg \theta = \Delta_A$ and $\theta \circ \neg \theta = \nabla_A$). In this case $A$ is $\tau$-isomorphic to $A/\theta \times A/ \neg \theta$. The pair $(\theta, \neg \theta)$ is called a {\it pair of factor congruences}.  We denote by $FC(A)$ the set of factor congruences on $A$.

\begin{prop}\label{BFCQUOT}
Let $A$ be an algebra of type $\tau$, $\sigma \in FC(A)$ and $\theta \in [\sigma, \nabla_A]$ such that $\theta/\sigma \in FC(A/\sigma)$. Then $\theta \in FC(A)$.

\end{prop}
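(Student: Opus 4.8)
The plan is to exhibit an explicit factor complement of $\theta$, assembled from the factor complement $\neg\sigma$ of $\sigma$ and from a factor complement of $\theta/\sigma$ in $A/\sigma$. Since $\theta/\sigma\in FC(A/\sigma)$, fix a factor complement $\psi$ of $\theta/\sigma$ in $A/\sigma$. Because $u_\sigma:[\sigma,\nabla_A]\rightarrow Con(A/\sigma)$ is a lattice isomorphism (part (2) of Theorem \ref{CORRESP}), there is a $\beta\in[\sigma,\nabla_A]$ with $\beta/\sigma=\psi$. I claim that $\neg\theta:=\beta\cap\neg\sigma$ is a factor complement of $\theta$ in $A$. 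By the equivalent characterization of factor congruences recalled above, it suffices to check that the congruence $\neg\theta$ satisfies $\theta\cap\neg\theta=\Delta_A$ and $\theta\circ\neg\theta=\nabla_A$.

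First I would settle the intersection condition. As $\theta$ and $\beta$ both contain $\sigma$, their meet lies in $[\sigma,\nabla_A]$, so the isomorphism $u_\sigma$ applies and preserves meets, giving $u_\sigma(\theta\cap\beta)=(\theta/\sigma)\cap(\beta/\sigma)=(\theta/\sigma)\cap\psi=\Delta_{A/\sigma}=u_\sigma(\sigma)$; injectivity of $u_\sigma$ then yields $\theta\cap\beta=\sigma$. Hence $\theta\cap\neg\theta=\theta\cap\beta\cap\neg\sigma=\sigma\cap\neg\sigma=\Delta_A$, where the last equality uses that $\neg\sigma$ is a factor complement of $\sigma$.

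The main work, and the step I expect to be the real obstacle, is proving $\theta\circ\neg\theta=\nabla_A$, since $\neg\theta$ has been cut down by the factor $\neg\sigma$ and one must recover permutability for the refined relation. I would first observe that $\theta\circ\beta=\nabla_A$: as $(\theta/\sigma,\psi)$ is a pair of factor congruences it is permutable, so $\theta/\sigma\circ\beta/\sigma=\nabla_{A/\sigma}$, and part (3) of Theorem \ref{CORRESP} transfers this back, yielding $(a,b)\in\theta\circ\beta$ for all $a,b\in A$. Now, given arbitrary $a,a'\in A$, I would produce the required intermediate element in two stages. Using $\theta\circ\beta=\nabla_A$, choose $w_0$ with $a\mathrel{\theta}w_0$ and $w_0\mathrel{\beta}a'$. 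Using $\sigma\circ\neg\sigma=\nabla_A$ applied to the pair $(w_0,a')$, choose $w$ with $w_0\mathrel{\sigma}w$ and $w\mathrel{\neg\sigma}a'$. Since $\sigma\subseteq\theta$ and $\sigma\subseteq\beta$, the relation $w_0\mathrel{\sigma}w$ gives both $w_0\mathrel{\theta}w$ and $w_0\mathrel{\beta}w$; combining with $a\mathrel{\theta}w_0$ and $w_0\mathrel{\beta}a'$ by transitivity yields $a\mathrel{\theta}w$ and $w\mathrel{\beta}a'$. Together with $w\mathrel{\neg\sigma}a'$ this shows $a\mathrel{\theta}w$ and $w\mathrel{(\beta\cap\neg\sigma)}a'$, so $(a,a')\in\theta\circ\neg\theta$. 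As $a,a'$ were arbitrary, $\theta\circ\neg\theta=\nabla_A$, which completes the verification that $\theta\in FC(A)$.
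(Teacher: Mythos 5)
Your proof is correct, but it takes a genuinely different route from the paper's. The paper argues via product decompositions: from $(\sigma,\neg\sigma)$ and $(\theta/\sigma,\neg(\theta/\sigma))$ it builds a chain of isomorphisms $A \cong_\tau A/\sigma \times A/\neg\sigma \cong_\tau ((A/\sigma)/(\theta/\sigma) \times (A/\sigma)/\neg(\theta/\sigma)) \times A/\neg\sigma \cong_\tau A/\theta \times B$, and then declares the factor complement of $\theta$ to be $Ker(\pi_B f)$ for the composite isomorphism $f$; the complement is thus obtained implicitly, as the kernel of a projection, and one must trace through the chain to see that the first coordinate of $f$ really is the natural map $A \to A/\theta$ (so that its kernel is $\theta$). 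You instead stay entirely inside $Con(A)$: pulling the complement $\psi$ of $\theta/\sigma$ back to $\beta \in [\sigma,\nabla_A]$ via the correspondence isomorphism $u_\sigma$, you exhibit the explicit factor complement $\neg\theta = \beta \cap \neg\sigma$, verify $\theta \cap \neg\theta = \Delta_A$ by applying $u_\sigma$ to the meet, and prove $\theta \circ \neg\theta = \nabla_A$ by a two-stage element chase (first through $\theta\circ\beta = \nabla_A$, transferred from the quotient by Theorem \ref{CORRESP}-3, then through $\sigma \circ \neg\sigma = \nabla_A$, using $\sigma \subseteq \theta \cap \beta$ to absorb the correction). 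Your approach buys an explicit formula for the complement and avoids any bookkeeping about isomorphisms of products — it uses only Theorem \ref{CORRESP} and the relational characterization of factor congruences — while the paper's approach is shorter on the page and fits the product-decomposition viewpoint it uses elsewhere (e.g.\ in Proposition \ref{NDIRECTPRODUCT}). Incidentally, your formula is the natural inverse of the one the paper uses later in Definition \ref{FACTORCPRESHEAF}-3 and Proposition \ref{MODPERM}, where the complement of $\theta/\sigma$ is taken to be $(\neg\theta \lor \sigma)/\sigma$; you recover the complement downstairs as $\beta \cap \neg\sigma$ rather than pushing one upstairs.
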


\begin{proof}
Let us suppose that $(\sigma, \neg \sigma)$ is a pair of factor congruences in $FC(A)$ and $(\theta/\sigma, \neg (\theta/\sigma))$ is a pair of factor congruences in $FC(A/\sigma)$. Then, by Theorem \ref{CORRESP}-1, we have that  
\begin{eqnarray*}
A & \cong_\tau & A/\sigma \times A/\neg \sigma \cong_\tau ((A/\sigma)/(\theta/\sigma) \times (A/\sigma)/\neg(\theta/\sigma)) \times A/\neg \sigma\\
& \cong_\tau & A/\theta \times B
\end{eqnarray*}
where $B = (A/\sigma)/\neg(\theta/\sigma) \times A/\neg \sigma$. Consider the diagram $A \stackrel{f}{\rightarrow} A/\theta \times B \stackrel{\pi_B}{\rightarrow} B$
where $f$ is a $\tau$-isomorphism. Then, $(\theta, Ker(\pi_B f))$ is a pair of factor congruences on $A$ proving that $\theta \in FC(A)$.

\qed
\end{proof}

\begin{prop}\label{NDIRECTPRODUCT}
Let $A$ be an algebra of type $\tau$ and let us consider the denumerable direct product $B = \prod_{{\mathbb N}}A$.  Then there exists $\sigma \in FC(B)$ such that $B \cong_{\tau} B/\sigma$. 

\end{prop}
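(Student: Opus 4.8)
The plan is to exploit the ``Hilbert hotel'' feature of countable products: removing a single coordinate from $B=\prod_{\mathbb N}A$ leaves a product whose index set is again countably infinite, hence isomorphic to $B$, while the removed coordinate splits off as a direct factor. I would package this splitting as a pair of factor congruences on $B$.

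First I would partition the index set as $\mathbb N=\{0\}\sqcup M$ with $M=\{1,2,3,\dots\}$ and set
\[
\sigma=\{(x,y)\in B^{2}:x_{i}=y_{i}\text{ for all }i\in M\},\qquad
\neg\sigma=\{(x,y)\in B^{2}:x_{0}=y_{0}\}.
\]
Both are congruences on $B$, being the kernels of the projections $\pi_{M}\colon B\to\prod_{i\in M}A$ and $\pi_{0}\colon B\to A$, which are $\tau$-homomorphisms because the operations of $B$ are computed coordinatewise. By the equivalent form of the definition of factor congruence recalled above, to see that $(\sigma,\neg\sigma)$ is a pair of factor congruences it suffices to check $\sigma\cap\neg\sigma=\Delta_{B}$ and $\sigma\circ\neg\sigma=\nabla_{B}$. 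The first is immediate: two tuples agreeing on $M$ and also at $0$ coincide. The second is the only step that needs an idea: given $x,y\in B$, I would form the ``glued'' tuple $w$ with $w_{0}=y_{0}$ and $w_{i}=x_{i}$ for $i\in M$; then $(x,w)\in\sigma$ and $(w,y)\in\neg\sigma$, so every pair lies in $\sigma\circ\neg\sigma$. Hence $\sigma\in FC(B)$.

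It then remains to identify the quotient. Since $\sigma$ is the kernel of the surjective $\tau$-homomorphism $\pi_{M}$, the homomorphism theorem gives $B/\sigma\cong_{\tau}\prod_{i\in M}A$. Finally the bijection $n\mapsto n+1$ of $\mathbb N$ onto $M$ induces a coordinate-relabeling map $\prod_{\mathbb N}A\to\prod_{i\in M}A$ that is a $\tau$-isomorphism, since relabeling index sets preserves all coordinatewise operations. Composing,
\[
B=\prod_{\mathbb N}A\;\cong_{\tau}\;\prod_{i\in M}A\;\cong_{\tau}\;B/\sigma,
\]
which is the assertion.

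The only genuinely nonroutine point is the permutability $\sigma\circ\neg\sigma=\nabla_{B}$, and once the gluing construction of $w$ is in place everything else reduces to bookkeeping about coordinatewise operations and the reindexing bijection; I do not anticipate any further obstacle.
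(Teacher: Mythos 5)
Your proof is correct and follows essentially the same route as the paper: split off one coordinate of $B=\prod_{\mathbb N}A$, take $\sigma$ to be the kernel of the projection onto the remaining coordinates, and use the Hilbert-hotel shift to identify $B/\sigma\cong\prod_{i\in M}A$ with $B$. The only difference is cosmetic: the paper cites the standard correspondence between binary direct-product decompositions and pairs of factor congruences to conclude $\sigma\in FC(B)$, whereas you verify $\sigma\cap\neg\sigma=\Delta_B$ and $\sigma\circ\neg\sigma=\nabla_B$ explicitly via the gluing construction.
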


\begin{proof}
If we consider $B = A \times \prod_{i\in  {\mathbb N} - \{1\}}A$ then, $f:B \rightarrow \prod_{i\in  {\mathbb N} - \{1\}}A$ defined by
$B \ni (b_i)_{i\in  {\mathbb N}} \mapsto f((b_i)_{i\in  {\mathbb N}})= (a_i)_{i\geq 2}$ where $a_2 = b_1, a_3 = b_2, \ldots a_{n+1} = b_n, \ldots$ is a ${\tau}$-isomorphism. Then, by defining $\sigma = Ker(\pi{_{\prod_{i\in  {\mathbb N} - \{1\}}  A}})$, we have that $\sigma \in FC(B)$ and $B \cong_{\tau} B/\sigma$.

\qed
\end{proof}

\begin{definition}
{\rm A {\it category of algebras} is a category ${\cal A}$ whose objects are algebras of type $\tau$ and whose morphisms are all of the $\tau$-homomorphisms (also called ${\cal A}$-homomorphisms) $f:A \rightarrow B$ such that $A,B$ are objects of ${\cal A}$.
}
\end{definition}

Let ${\cal A}$ be a category of algebras. We denote by $Ob({\cal A})$ the class of objects of ${\cal A}$ and by $Hom_{\cal A}$ the set of all ${\cal A}$-homomorphisms.  
For the sake of simplicity if $A$ is an object of ${\cal A}$  then we write $A \in {\cal A}$ when there is no confusion. If two objects $A,B \in {\cal A}$ are $\tau$-isomorphics then we denote it by $A\cong_{_{\cal A}}B$. Note that if ${\cal A}$ a class of algebras of type $\tau$ then we can identify ${\cal A}$ with a category of algebras by considering all of the $\tau$-homomorphisms between algebras of ${\cal A}$ as arrows of ${\cal A}$.  In this sense varieties and quasivarieties can be seen as categories of algebras. A {\it presheaf} on a category ${\cal C}$ is  a functor ${\cal F}: {\cal C}^{op} \rightarrow Set$ where ${\cal C}^{op}$ is the dual category of ${\cal C}$ and $Set$ is the category of all sets.

\section{Presheaf approach to the CBS-theorem } \label{AFCBS}

In this section we provide a general  framework for the CBS-theorem that capture the Sikorski-Tarski version in purely algebraic terms. Taking into account Proposition \ref{CENTER}-2, the Sikorski-Tarski version of the CBS-theorem reads as follows: 

\begin{theo}
For any two $\sigma$-complete Boolean algebras $A$, $B$ and for any element $a\in A$, and $b\in B$, if $A$ is Boolean-isomorphic to $[b,1]_B$ and $B$ is Boolean-isomorphic to $[a,1]_A$, then $A$ is Boolean-isomorphic to $B$. \qed
\end{theo}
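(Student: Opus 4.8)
The plan is to reproduce the classical back-and-forth/telescoping argument of the Sikorski–Tarski theorem, but phrased entirely in terms of the intervals $[a,1]_A$ and $[b,1]_B$ with their Boolean structure guaranteed by Proposition \ref{CENTER}-2. Let $\varphi: A \to [b,1]_B$ and $\psi: B \to [a,1]_A$ be the given Boolean isomorphisms. The first step is to transport the situation so that everything lives inside a single algebra: composing, $\psi\circ\varphi : A \to [a,1]_A$ is a Boolean isomorphism of $A$ onto the principal filter $[a,1]_A$, and dually $\varphi\circ\psi: B \to [b,1]_B$. So it suffices to handle the following core lemma: if a $\sigma$-complete Boolean algebra $A$ is isomorphic to one of its intervals $[a,1]_A$, then writing $\chi=\psi\circ\varphi$ and setting $a_0=0$, $a_1=a$, and inductively $a_{n+1}=\chi(a_n)$, one obtains an increasing chain $0=a_0\le a_1\le a_2\le\cdots$ whose ``gaps'' $[a_n,a_{n+1}]$ are all Boolean-isomorphic to one another via restrictions of $\chi$.

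**Next I would** build the isomorphism $A\cong B$ by a partition argument. Using the complementation in $A$, decompose $A$ as a direct product indexed by the telescoping intervals: intuitively $A\cong \prod$ of the gap-pieces $[a_n,a_{n+1}]$ together with a ``tail'' piece $[c,1]$ where $c=\bigvee_n a_n$ (this denumerable supremum exists by $\sigma$-completeness; here is where the $\sigma$-completeness hypothesis is essential). Because $\chi$ shifts each gap isomorphically onto the next, the product of the even-indexed gaps is isomorphic to the product of the odd-indexed gaps, and one assembles a Boolean isomorphism between $[a,1]_A$ (the image) and all of $A$ that respects the complement. The analogous decomposition is performed on the $B$-side through $\varphi\circ\psi$, and the two assembled maps are then glued using $\varphi$ and $\psi$ themselves on the residual tail pieces to produce a single Boolean isomorphism $A\cong B$.

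**The hard part will be** making the ``product decomposition along a denumerable chain'' precise inside an abstract $\sigma$-complete Boolean algebra, rather than a concrete power-set algebra. Unlike the set-theoretic original, I cannot literally form an infinite direct product of subintervals; instead I must work with an orthogonal family of complements $d_n = a_{n+1}\wedge \neg a_n$ (the element of $A$ representing the $n$-th gap), verify these form a denumerable orthogonal set, take the supremum $c=\bigvee_n d_n = \bigvee_n a_n$, and then define the candidate isomorphism on each $d_n$-component by the appropriate restriction of $\chi$ and on the complement $\neg c$ by the identity. The delicate points are checking that this piecewise definition is well defined and total (which needs the $\sigma$-completeness to guarantee the relevant suprema exist and that $x=\bigvee_n (x\wedge d_n)\vee(x\wedge\neg c)$ for every $x$), that it preserves $\vee$, $\wedge$ and $\neg$ (using neutrality/centrality of the $a_n$ from Proposition \ref{CENTER}-1 so that meets distribute over the infinite joins), and that it is a bijection. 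Once this telescoping isomorphism $A\cong[a,1]_A$ is upgraded to $A\cong A$-onto-itself-shifted correctly and matched against the $B$-side, composing with $\varphi,\psi$ yields $A\cong_{\text{Bool}} B$, which is the desired conclusion.
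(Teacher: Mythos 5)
Your opening reduction contains a genuine error that propagates through the whole argument. The composite $\psi\circ\varphi$ is a Boolean isomorphism of $A$ onto $[\psi(b),1]_A$, \emph{not} onto $[a,1]_A$: since $\varphi$ maps $A$ onto $[b,1]_B$ and $\psi$ maps $B$ isomorphically onto $[a,1]_A$, the image of the composite is $\psi([b,1]_B)=[\psi(b),1]_A$, in general a proper subinterval of $[a,1]_A$ (and the composite does not even preserve the complement $\neg_a$ of $[a,1]_A$, only the complement $\neg_{\psi(b)}$). Observe that if your claim were true, the theorem would follow instantly with no telescoping at all: $A\cong[a,1]_A\cong B$ via $\psi^{-1}$. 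As a result, your ``core lemma'' is circular --- its hypothesis ($A$ isomorphic to $[a,1]_A$) is exactly the conclusion the proof needs --- and its key assertion fails: the gaps are \emph{not} all Boolean-isomorphic to one another. In the correct setting, where $\chi=\psi\circ\varphi$ maps $A$ onto $[a_1,1]_A$ with $a_1=\psi(b)\geq a$, the chain must interleave the two parameters: $c_0=0$, $c_1=a$, $c_2=a_1$, and $c_{n+2}=\chi(c_n)$ in general. Then $\chi$ carries the gap $[c_n,c_{n+1}]$ onto $[c_{n+2},c_{n+3}]$, so the gaps fall into two alternating isomorphism classes (those isomorphic to $[0,a]$ and those isomorphic to $[a,a_1]$). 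The isomorphism $A\to[a,1]_A$ is obtained by shifting only the even-indexed gaps by $\chi$ and fixing the odd-indexed gaps and the tail $[\bigvee_n c_n,1]$ pointwise; composing with $\psi^{-1}$ then gives $A\cong B$.

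The machinery in your final paragraph is sound and would carry this corrected argument: the orthogonal gap elements $d_n$, the supremum provided by $\sigma$-completeness, and the piecewise definition all work, using the countable distributive law $x\wedge\bigvee_n y_n=\bigvee_n(x\wedge y_n)$, which holds in \emph{any} Boolean algebra for suprema that exist (it is not a consequence of neutrality of the $c_n$; it is a separate standard fact), to show that $x\mapsto\bigl((x\wedge d_n)_n,\,x\wedge\neg c\bigr)$ is an isomorphism onto the product of the pieces. What is missing is the interleaved chain this machinery must be applied to. For comparison: the paper itself does not prove this statement directly --- it is quoted as the classical Sikorski--Tarski result --- but recovers it later through the abstract framework (BFC, Boolean factor congruences presheaves, Proposition \ref{CBS33}), and there the interleaving you omitted is plainly visible: the CBS-sequence of Definition \ref{CBSSEQURNCE} has the form $(\sigma_{2n}\lor\neg\sigma_{2n+1})_{n\geq 0}$, mixing even- and odd-indexed members of the congruence chain, which is precisely the feature your single chain $a_{n+1}=\chi(a_n)$ lacks.
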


Clearly, to obtain the classical CBS-theorem it suffices to assume that $A$ and $B$ are the power sets of two sets endowed with  the natural set-theoretic Boolean operations. In this algebraic version of the CBS-theorem, we first note that $[a,1]_A$ and $[b,1]_B$ are Boolean-isomorphics to the quotients algebras $A/\theta_a$ and $B/\theta_b$ respectively where,  $\theta_a = \{(x,y) \in A^2: x\lor a = y \lor a \} \in FC(A)$ and  $\theta_b = \{(x,y) \in B^2: x\lor b = y \lor b \} \in FC(B)$. We also note that the hypothesis of $\sigma$-completeness in $A$ and $B$ can be equivalent expressed as a condition of $\sigma$-completeness in $FC(A)$ and $FC(B)$. These facts suggest that, the Sikorski-Tarski version of the CBS-theorem depends on order-theoretic properties about the set of factor congruences of Boolean algebras. This framework can become even more general by taking into account the following:

A category of algebras ${\cal A}$ where for each $A\in {\cal A}$ a subset ${\cal K}(A) \subseteq Con(A)$ is considered. Order-theoretic properties imposed on the set ${\cal K}(A)$,  will allow us to establish conditions for the validity of the $CBS$-theorem formulated in this abstract framework. In order to do it, we need some preliminary results. 

Let $A,B$ two algebras of type $\tau$ and $f:A\rightarrow B$ be a $\tau$-homomorphism. Then we define the following sets: 
\begin{equation}\label{UPF}
f^*(\theta) = \{(a,b) \in A^2: (f(a),f(b)) \in \theta\} \hspace{0.2cm} \mbox{for each $\theta \in Con(B)$}.
\end{equation}
\begin{equation}\label{DOWNF}
f_*(\theta) = \{(f(a), f(b)) \in B^2 : (a,b) \in \theta\} \hspace{0.2cm} \mbox{for each $\theta \in Con(A)$}.
\end{equation}

\begin{prop}\label{AUX}
Let $A,B$ be two algebras of type $\tau$ and $f:A\rightarrow B$ be a $\tau$-homomorphism. Then we have:

\begin{enumerate}
\item
The assignment $Con(B)\ni\theta \mapsto f^*(\theta)$ defines  an order homomorphism $f^*:Con(B) \rightarrow Con(A)$. 

\item
$(gf)^* = f^*g^*$ whenever the composition of $\tau$-homomorphism $gf$ is defined.

\item
$1_A^* = 1_{Con(A)} $.   

\item
If $f$ is a $\tau$-isomorphism then, the assignment $Con(A) \ni \theta \mapsto f_*(\theta)$ defines an order isomorphism $f_*:Con(A) \rightarrow Con(B)$ and $f_* = (f^*)^{-1} = (f^{-1})^*$. Moreover $f': A/\theta \rightarrow B/f_*(\theta)$ such that $f'(x_\theta) = f(x)_{f_*(\theta)}$ is a $\tau$-isomorphism.

\item
If $f$ is a $\tau$-isomorphism and $\theta_1, \theta _2 \in Con(A)$ are permutable then $f_*(\theta_1)$, $f_*(\theta_2)$ are permutable in $Con(B)$   

\end{enumerate}

\end{prop}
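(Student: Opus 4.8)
The plan is to establish the five claims in order, leveraging the earlier ones to shorten the later arguments. The central observation is that $f^*$ is simply the inverse image under the map $f\times f: A^2 \to B^2$, and $f_*$ (when $f$ is bijective) is the direct image; most of the work consists in checking that these set-theoretic operations interact correctly with the congruence structure.

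For part (1), I would first verify that $f^*(\theta)$ is a congruence on $A$. Reflexivity, symmetry and transitivity transfer immediately from $\theta$ through the definition (\ref{UPF}). For compatibility with an $n$-ary operation symbol $g$, I would take pairs $(a_i,b_i)\in f^*(\theta)$, note that $(f(a_i),f(b_i))\in\theta$, apply compatibility of $\theta$, and then use the homomorphism identity $g^B(f(a_1),\ldots)=f(g^A(a_1,\ldots))$ to pull the conclusion back into $A$. Monotonicity is immediate since inverse images preserve inclusion, yielding the order homomorphism $f^*:Con(B)\to Con(A)$.

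Parts (2) and (3) are direct unfoldings of the definitions: expanding both $(gf)^*(\theta)$ and $f^*g^*(\theta)$ produces the same set $\{(a,b):(g(f(a)),g(f(b)))\in\theta\}$, and $1_A^*(\theta)=\theta$ is immediate. These two functorial identities are what make part (4) clean. Rather than checking bijectivity of $f_*$ by hand, I would first prove the set-theoretic identity $f_*=(f^{-1})^*$ directly from the definitions (using that $f^{-1}$ inverts $f$), which by part (1) applied to $f^{-1}$ already shows $f_*$ is an order homomorphism $Con(A)\to Con(B)$. Then I would combine parts (2) and (3): from $f^{-1}f=1_A$ and $ff^{-1}=1_B$ one gets $f^*(f^{-1})^*=1_{Con(A)}$ and $(f^{-1})^*f^*=1_{Con(B)}$, i.e.\ $f^*f_*=\mathrm{id}$ and $f_*f^*=\mathrm{id}$. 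Hence $f_*=(f^*)^{-1}$ and both maps are order isomorphisms. The ``moreover'' clause on $f':A/\theta\to B/f_*(\theta)$ then follows: well-definedness and injectivity both reduce to the equivalence $(x,y)\in\theta \Longleftrightarrow (f(x),f(y))\in f_*(\theta)$, which is exactly the identity $f_*=(f^{-1})^*$; surjectivity uses surjectivity of $f$; and the homomorphism property is inherited from $f$ through the quotient operations.

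For part (5) the key step is to show that $f_*$ preserves the relational product, i.e.\ $f_*(\theta_1\circ\theta_2)=f_*(\theta_1)\circ f_*(\theta_2)$. The forward inclusion only uses that $f$ is a function: a witness $c$ for $(a,b)\in\theta_1\circ\theta_2$ transports to the witness $f(c)$. The reverse inclusion is where the hypothesis that $f$ is an isomorphism is essential, and I expect this to be the only place needing genuine care: given $(u,v)\in f_*(\theta_1)\circ f_*(\theta_2)$ with witness $w$, surjectivity lets me write $u=f(a)$, $w=f(c)$, $v=f(b)$, and injectivity (again via $f_*=(f^{-1})^*$) forces $(a,c)\in\theta_1$ and $(c,b)\in\theta_2$, so $c$ witnesses $(a,b)\in\theta_1\circ\theta_2$. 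Once this identity is in hand, permutability of the images is immediate, since $f_*(\theta_1)\circ f_*(\theta_2)=f_*(\theta_1\circ\theta_2)=f_*(\theta_2\circ\theta_1)=f_*(\theta_2)\circ f_*(\theta_1)$ using permutability of $\theta_1,\theta_2$. Everything else is routine bookkeeping with the two definitions (\ref{UPF}) and (\ref{DOWNF}).
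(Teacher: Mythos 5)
Your proposal is correct, but it is organized differently from the paper's proof in the two nontrivial items. For part (4) the paper proceeds by direct element-chasing: it verifies $f^*f_*=1_{Con(A)}$ and $f_*f^*=1_{Con(B)}$ pointwise, proves order-preservation of $f_*$ separately, and only at the end checks $f_*=(f^{-1})^*$; you instead prove the set-theoretic identity $f_*=(f^{-1})^*$ \emph{first} and then invoke the functorial identities (2) and (3) applied to $f^{-1}f=1_A$ and $ff^{-1}=1_B$ to conclude $f^*f_*=\mathrm{id}$, $f_*f^*=\mathrm{id}$, hence $f_*=(f^*)^{-1}$. Your ordering buys two things: it eliminates the element-wise inverse computations, and it yields for free (via part (1) applied to $f^{-1}$) that $f_*(\theta)$ is in fact a congruence and that $f_*$ is order-preserving --- facts the paper asserts rather than derives when it says ``$f_*$ defines a bijective function $Con(A)\to Con(B)$.'' For part (5) the paper shows directly that any pair in $f_*(\theta_1)\circ f_*(\theta_2)$ lies in $f_*(\theta_2)\circ f_*(\theta_1)$ by transporting witnesses back and forth along $f$; you factor the argument through the lemma $f_*(\theta_1\circ\theta_2)=f_*(\theta_1)\circ f_*(\theta_2)$ and then conjugate the hypothesis $\theta_1\circ\theta_2=\theta_2\circ\theta_1$. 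This is cleaner, with one small point worth making explicit: $\theta_1\circ\theta_2$ need not be a congruence for arbitrary $\theta_1,\theta_2$, so you should either note that the direct-image identity holds for arbitrary binary relations, or note that under the permutability hypothesis $\theta_1\circ\theta_2=\theta_1\vee\theta_2$ is a congruence; either remark closes the point, and in the application the hypothesis is available. Your treatment of the ``moreover'' clause on $f'$ is more compressed than the paper's (which writes out the full term-wise computation that $f'$ preserves $\tau$-operations), but the reduction you describe is the same one the paper carries out.
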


\begin{proof}
 1) Straightforward calculation.

2) Let $A \stackrel{f}{\rightarrow} B \stackrel{g}{\rightarrow} C$ be a composition of  $\tau$-homomorphisms. Consider the diagram $Con(A) \stackrel{f^*}{\leftarrow} Con(B) \stackrel{g^*}{\leftarrow} Con(C)$. If $\theta \in Con(C)$ then, $f^*g^*(\theta) =  \{(x,y) \in A^2: (f(a),f(b)) \in g^*(\theta) \} = \{(x,y) \in A^2: (gf(a),gf(b)) \in \theta \} = (gf)^*(\theta)$. Hence $(gf)^* = f^*g^*$. 

3) Immediate

4) Let us assume that $f$ is a $\tau$-isomorphism. Then $f_*$ defines a bijective function $f_*:Con(A) \rightarrow Con(B)$. We first prove that $f^* f_* = 1_{Con(A)}$. Let $\theta \in Con(A)$. Then $(x,y) \in f^* f_*(\theta)$ iff $(f(x), f(y)) \in f_*(\theta)$ iff $(x,y) \in \theta$. Therefore  $f^* f_* = 1_{Con(A)}$. Now we prove that $f_* f^* = 1_{Con(B)}$. Let $\theta \in Con(B)$. Then $(x,y) \in f_* f^*(\theta)$ iff there exists $(x_0, y_0) \in f^*(\theta)$ such that $f(x_0) = x$ and $f(y_0) = y$. Since $(x_0, y_0) \in f^*(\theta)$ iff $(x,y) = (f(x_0), f(y_0)) \in \theta$, then we have that $f_* f^* = 1_{Con(B)}$. Thus $f_* = (f^*)^{-1}$. 

Let $f^{-1}$ be the inverse of $f$ and $\theta \in Con(A)$. Then $(x,y) \in (f^{-1})^*(\theta) \subseteq B^2$ iff $(f^{-1}(x), f^{-1}(y)) \in \theta$ iff $(ff^{-1}(x), ff^{-1}(y)) \in f_*(\theta)$ iff $(x, y) \in f_*(\theta)$. It proves that  $f_* = (f^*)^{-1} = (f^{-1})^*$. 

Now we prove that $f_*$ is an order preserving function. Suppose that $\theta_1 \subseteq \theta_2$ in $Con(A)$. Let $(c,d) \in f_*(\theta_1)$. Then $(f^{-1}(c), f^{-1}(d)) \in \theta_1 \subseteq \theta_2$ and $(c,d) \in f_*(\theta_2)$. Hence $f_*(\theta_1) \subseteq f_*(\theta_2)$ and $f_*$ is an order isomorphism from $Con(A)$ onto $Con(B)$. 

We first prove that $f'$ is well defined. If $x_\theta = y_\theta$ then, $(x,y)\in \theta$, $(f(x),f(y))\in f_*(\theta)$ and $f'(x_\theta) = f(x)_{f_*(\theta)} = f(y)_{/f_*(\theta)} =  f'(y_\theta)$. Thus $f'$ is well defined. If $f'(x_\theta) = f'(y_\theta)$ then, $(f(x), f(y)) \in f_*(\theta)$ and $(x,y) \in \theta$. Thus, $x_\theta = y_\theta$ and  $f'$ is injective.
We shall now proceed to prove that $f'$ is surjective. Let $y_{f_*(\theta)} \in B/f_*(\theta)$. Since $f$ is surjective, there exists $x\in A$ such that $f(x)= y$. Thus $y_{f_*(\theta)} = f(x)_{f_*(\theta)} = f'(x_\theta)$. Therefore $f'$ is surjective. Let $t(x_1 \ldots x_n) \in Term_\tau(X)$. Then for $a_1 \ldots a_n \in A$ we have that:  
\begin{eqnarray*}
f'(t^{A/\theta}({a_1}_\theta, \ldots, {a_n}_\theta)) & = &  f'(t^A(a_1,\ldots , a_n )_\theta) = f(t^A(a_1 ,\ldots , a_n ))_{f_*(\theta)}\\
& = & t^B(f(a_1) ,\ldots , f(a_n))_{f_*(\theta)} \\
 & = & t^{B/f_*(\theta)}(f(a_1)_{f_*(\theta)} ,\ldots ,f(a_n)_{f_*(\theta)})\\
 & = &  t^{B/f_*(\theta)}(f'({a_1}_\theta) , \ldots , f'({a_n}_\theta)).
\end{eqnarray*}
It proves that $f'$ preserve $\tau$-operations. Hence, $f'$ is a $\tau$-isomorphism.

5) Let us assume that $\theta_1, \theta _2 \in Con(A)$  are permutable. Since $f$ is a $\tau$-isomorphism each pair in $f_*(\theta_1) \circ f_*(\theta_2)$ has the form $(f(x), f(y))$ where $x,y \in A$. Suppose that $(f(x), f(y)) \in f_*(\theta_1) \circ f_*(\theta_2)$. Then, by definition of relational product, there exists $w\in A$ such that $(f(x), f(w)) \in f(\theta_1)$ and $(f(w), f(y)) \in f(\theta_2)$. Thus $(x,w) \in \theta_1$, $(w,y) \in \theta_2$ and $(x,y) \in \theta_1 \circ \theta_2 = \theta_2 \circ \theta_1$. It implies that there exists $v \in A$ such that $(x,v) \in \theta_2$ and $(v,x) \in \theta_1$ and consequently $(f(x), f(v)) \in f_*(\theta_2)$ and $(f(v),f(x)) \in f_*(\theta_1)$. Therefore $(f(x), f(y)) \in f_*(\theta_2) \circ f_*(\theta_1)$proving that $f_*(\theta_1)$, $f_*(\theta_2)$ are permutable.

\qed
\end{proof}

\begin{prop}\label{AUX1}
Let $A$ be an algebra, $\sigma \in Con(A)$ and the order isomorphism $u_\sigma: [\sigma, \nabla_A ] \rightarrow Con(A/\sigma)$ given by $u(\theta)=\theta/\sigma$. 
If $p:A \rightarrow A/\sigma$ is the natural homomorphism then $p^* = u_\sigma^{-1}$.
\end{prop}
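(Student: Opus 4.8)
The plan is to exploit that $u_\sigma$ is already known to be a bijection (indeed a lattice isomorphism, by Theorem \ref{CORRESP}-2), so that establishing the single identity $u_\sigma \circ p^* = 1_{Con(A/\sigma)}$ will be enough: composing on the left with $u_\sigma^{-1}$ then yields $p^* = u_\sigma^{-1}$ immediately. Thus I would not try to verify both composites, only that $p^*$ is a right inverse of $u_\sigma$, after first checking that $p^*$ actually takes values in the domain $[\sigma, \nabla_A]$ of $u_\sigma$.

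First I would pin down the codomain. By Proposition \ref{AUX}-1 the assignment $\phi \mapsto p^*(\phi)$ is an order homomorphism from $Con(A/\sigma)$ into $Con(A)$, so it remains to see that its image lies in $[\sigma,\nabla_A]$. For this, fix $\phi \in Con(A/\sigma)$ and take any $(a,b)\in\sigma$. Then $p(a)=a_\sigma=b_\sigma=p(b)$, so $(p(a),p(b))\in\phi$ by reflexivity of $\phi$, whence $(a,b)\in p^*(\phi)$ by the definition \eqref{UPF}. This shows $\sigma\subseteq p^*(\phi)$, i.e.\ $p^*(\phi)\in[\sigma,\nabla_A]$, so that $p^*$ may legitimately be viewed as a map $Con(A/\sigma)\rightarrow[\sigma,\nabla_A]$.

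Next I would compute the quotient $p^*(\phi)/\sigma$ directly from its definition \eqref{QUOTCONG} and compare it with $\phi$. Unwinding, $p^*(\phi)/\sigma = \{(x_\sigma,y_\sigma)\in (A/\sigma)^2 : (x,y)\in p^*(\phi)\}$, and by \eqref{UPF} the condition $(x,y)\in p^*(\phi)$ is exactly $(p(x),p(y))=(x_\sigma,y_\sigma)\in\phi$. Hence $p^*(\phi)/\sigma = \{(x_\sigma,y_\sigma): (x_\sigma,y_\sigma)\in\phi\} = \phi$. In other words $u_\sigma(p^*(\phi))=p^*(\phi)/\sigma=\phi$ for every $\phi\in Con(A/\sigma)$, which is precisely $u_\sigma\circ p^* = 1_{Con(A/\sigma)}$.

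Finally I would conclude: since $u_\sigma$ is a bijection by Theorem \ref{CORRESP}-2, the identity $u_\sigma\circ p^* = 1_{Con(A/\sigma)}$ forces $p^* = u_\sigma^{-1}\circ u_\sigma\circ p^* = u_\sigma^{-1}$, as desired. I do not expect any genuine obstacle here; the argument is a careful unwinding of the three definitions \eqref{UPF}, \eqref{QUOTCONG} and of $u_\sigma$. The one point that requires a moment's attention — and is the natural place for a slip — is the verification that $p^*(\phi)$ really sits inside the interval $[\sigma,\nabla_A]$, since otherwise the composite $u_\sigma\circ p^*$ would not even be defined on the correct domain.
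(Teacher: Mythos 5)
Your proposal is correct and is essentially the paper's own argument: a direct unwinding of the definitions in Eq.(\ref{UPF}) and Eq.(\ref{QUOTCONG}), combined with the bijectivity of $u_\sigma$ from Theorem \ref{CORRESP}-2. The only difference is the direction of the composite verified: the paper computes $p^*(\theta/\sigma)=\theta$ for each $\theta\in[\sigma,\nabla_A]$ (i.e.\ $p^*\circ u_\sigma$ is the identity, so the containment $\sigma\subseteq p^*(\theta/\sigma)$ comes for free), whereas you verify $u_\sigma\circ p^*=1_{Con(A/\sigma)}$, which is exactly why you needed the preliminary check that $p^*(\phi)$ lies in $[\sigma,\nabla_A]$.
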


\begin{proof}
Let $\theta \in [\sigma, \nabla]$. Then, by Eq.(\ref{QUOTCONG}), we have that 
\begin{eqnarray*}
p^*(\theta/\sigma) & = &  \{(x,y) \in A^2: (p(x), p(y)) \in \theta/\sigma \} = \{(x,y) \in A^2: (x_\sigma, y_\sigma) \in \theta/\sigma \}\\
 & = & \{(x,y) \in A^2: (x, y) \in \theta \}= \theta = u^{-1}(\theta/\sigma).
\end{eqnarray*}
Hence our claim.

\qed
\end{proof}

\begin{definition}\label{congoperator}
{\rm
Let ${\cal A}$ be a category of algebras. A {\it congruences operator} over ${\cal A}$ is a class operator of the form ${\cal A} \ni A \mapsto {\cal K}(A) \subseteq Con(A)$ such that,
\begin{enumerate}
\item
$\Delta_A \in {\cal K}(A) $.

\item
For each $\sigma \in {\cal K}(A)$, $A/\sigma \in {\cal A}$.

\item
If $f:A \rightarrow B$ is a ${\cal A}$-isomorphism then $f^* \upharpoonright_{{\cal K}(B)}: {\cal K}(B) \rightarrow {\cal K}(A)$ is an order isomorphism.

\end{enumerate}
}
\end{definition}

\begin{prop}\label{PRESHEAF}
Let ${\cal A}$ be a category of algebras and ${\cal K}$  be a congruences operator over ${\cal A}$. Let us define the class 
\begin{equation}\label{HOMAK}
Hom_{{\cal A}_{\cal K}}= \{A \stackrel{f}{\rightarrow} B \in Hom_{\cal A}: \exists \sigma \in {\cal K}(A)\hspace{0.1cm} s.t. \hspace{0.1cm} B \cong_{\cal A} A/\sigma  \}.
\end{equation}
Then the following assertions are equivalent:

\begin{enumerate}
\item
The pair ${\cal A}_{{\cal K}} = \langle Ob({\cal A}), Hom_{{\cal A}_{\cal K}}\rangle$ is a category and, by defining ${\cal K}(f) = f^* \upharpoonright_{{\cal K}(B)}$ for each $A \stackrel{f}{\rightarrow} B \in Hom_{{\cal A}_{\cal K}}$,  ${\cal K}: {\cal A}_{\cal K} \rightarrow Set$ is a presheaf.

\item
For each $A \in {\cal A}$ and $\sigma \in {\cal K}(A)$, if $p:A \rightarrow A/\sigma$ is the natural ${\cal A}$-homomorphism then the restriction $p^*\upharpoonright_{{\cal K}(A/\sigma)}$ is an order isomorphism from ${\cal K}(A/\sigma)$ onto ${\cal K}(A)\cap [\sigma, \nabla_A]$. 

\item 
$\theta \in {\cal K}(A)\cap[\sigma, \nabla_A ]$ iff $ \theta / \sigma \in {\cal K}(A/\sigma)$, for all $A \in {\cal A}$ and $\sigma \in {\cal K}(A)$.

\end{enumerate}
\end{prop}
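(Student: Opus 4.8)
The plan is to prove Proposition \ref{PRESHEAF} by establishing the cycle of implications $(1) \Rightarrow (3) \Rightarrow (2) \Rightarrow (1)$, or more naturally to first prove the equivalence $(2) \Leftrightarrow (3)$ (which is essentially bookkeeping via the correspondence theorem) and then connect these to the functoriality statement $(1)$. The central tool throughout will be Proposition \ref{AUX1}, which identifies the natural-homomorphism operator $p^*$ with the inverse of the correspondence isomorphism $u_\sigma: [\sigma,\nabla_A] \to Con(A/\sigma)$. Combined with Theorem \ref{CORRESP}-2, this tells us exactly how $p^*$ acts: it sends $\theta/\sigma$ back to $\theta$.

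First I would prove $(2) \Leftrightarrow (3)$. By Proposition \ref{AUX1}, $p^* = u_\sigma^{-1}$, and by Theorem \ref{CORRESP}-2, $u_\sigma$ is a lattice isomorphism from $[\sigma,\nabla_A]$ onto $Con(A/\sigma)$. The restriction $p^*\upharpoonright_{{\cal K}(A/\sigma)}$ is therefore automatically an order \emph{embedding} onto its image; the only content of $(2)$ is the identification of that image. Since $p^*(\theta/\sigma) = \theta$ lands inside $[\sigma,\nabla_A]$ for every $\theta/\sigma \in Con(A/\sigma)$, the image $p^*({\cal K}(A/\sigma))$ is contained in $[\sigma,\nabla_A]$, and the assertion that this image equals ${\cal K}(A) \cap [\sigma,\nabla_A]$ is precisely the biconditional that $\theta \in {\cal K}(A)\cap[\sigma,\nabla_A]$ holds iff $\theta/\sigma \in {\cal K}(A/\sigma)$. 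Unwinding $u_\sigma^{-1}$ in each direction gives $(2) \Leftrightarrow (3)$ directly.

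Next I would close the loop with $(1)$. For $(3) \Rightarrow (1)$, the main task is to verify that $Hom_{{\cal A}_{\cal K}}$ is closed under composition and contains identities, so that ${\cal A}_{\cal K}$ is a genuine subcategory; functoriality of ${\cal K}$ then follows from Proposition \ref{AUX}-2,3 (which give $(gf)^* = f^*g^*$ and $1_A^* = 1_{Con(A)}$), once one checks that the restrictions to the ${\cal K}$-parts are well defined. Closure under composition is where condition $(3)$ is needed: given $A \to B \to C$ in $Hom_{{\cal A}_{\cal K}}$, one uses that $B \cong_{\cal A} A/\sigma$ and $C \cong_{\cal A} B/\tau \cong_{\cal A} (A/\sigma)/(\tau'/\sigma)$, and Theorem \ref{CORRESP}-1 together with $(3)$ to exhibit the composite as a quotient $A/\theta$ with $\theta \in {\cal K}(A)$. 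For $(1) \Rightarrow (3)$, presheaf functoriality forces $p^* \upharpoonright_{{\cal K}(A/\sigma)}$ to be a morphism of the appropriate sort whose image is ${\cal K}(A)$-valued, and reading off its action via Proposition \ref{AUX1} recovers the biconditional.

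The main obstacle I anticipate is the closure-under-composition step in $(3) \Rightarrow (1)$: one must carefully track the interplay between the abstract isomorphisms $B \cong_{\cal A} A/\sigma$ provided by the definition of $Hom_{{\cal A}_{\cal K}}$ and the concrete congruences, using Proposition \ref{AUX}-4 to transport ${\cal K}$-congruences across the ${\cal A}$-isomorphisms and the third transitivity-type clause of the correspondence theorem to identify the iterated quotient. The bookkeeping is delicate because condition $(3)$ must be applied to guarantee that the congruence $\theta$ realizing the composite lies in ${\cal K}(A)$ and not merely in $Con(A)$; everything else reduces to the already-established properties of the operators $f^*, f_*$ and the correspondence isomorphism.
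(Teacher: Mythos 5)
Your equivalence $(2) \Leftrightarrow (3)$ is correct and is essentially the paper's own bookkeeping (Proposition \ref{AUX1} together with Theorem \ref{CORRESP}-2), and your sketch of $(3) \Rightarrow (1)$ --- identities from $\Delta_A \in {\cal K}(A)$, closure under composition by transporting ${\cal K}$-congruences across the isomorphisms via Proposition \ref{AUX}-4 and Theorem \ref{CORRESP}-1 and then invoking $(3)$, functoriality from Proposition \ref{AUX}-(2,3) after factoring each morphism of $Hom_{{\cal A}_{\cal K}}$ as an ${\cal A}$-isomorphism composed with a natural projection --- follows the paper's argument (your composition check even treats general composable pairs $A \to B \to C$, where the paper only writes out iterated projections). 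The genuine gap is in $(1) \Rightarrow (3)$.

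Condition $(3)$ is a biconditional, and your argument delivers only one half of it. The presheaf axiom applied to the natural projection $p:A \to A/\sigma$ (which lies in $Hom_{{\cal A}_{\cal K}}$ because $\sigma \in {\cal K}(A)$) says that ${\cal K}(p) = p^*\upharpoonright_{{\cal K}(A/\sigma)}$ takes values in ${\cal K}(A)$; since $p^*(\theta/\sigma)=\theta$ by Proposition \ref{AUX1}, this yields precisely: if $\theta/\sigma \in {\cal K}(A/\sigma)$ then $\theta \in {\cal K}(A)\cap[\sigma,\nabla_A]$. The converse half --- if $\theta \in {\cal K}(A)\cap[\sigma,\nabla_A]$ then $\theta/\sigma \in {\cal K}(A/\sigma)$ --- is the \emph{surjectivity} of ${\cal K}(p)$ onto ${\cal K}(A)\cap[\sigma,\nabla_A]$, and nothing in the presheaf axioms forces the image of ${\cal K}(p)$ to fill up that interval; ``reading off the action via Proposition \ref{AUX1}'' cannot produce membership of $\theta/\sigma$ in ${\cal K}(A/\sigma)$, because in everything you have written that membership is a hypothesis, never a conclusion. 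The paper closes this gap with a separate device: given $\theta \in {\cal K}(A)\cap[\sigma,\nabla_A]$, it identifies (via Theorem \ref{CORRESP}-1) the natural projection $A/\sigma \to (A/\sigma)/(\theta/\sigma)$ with the morphism $g:A/\sigma \to A/\theta$, $g(x_\sigma)=x_\theta$, regarded as an element of $Hom_{{\cal A}_{\cal K}}$, and then applies the presheaf map ${\cal K}(g)=g^*\upharpoonright_{{\cal K}(A/\theta)}$ to the distinguished element $\Delta_{A/\theta} \in {\cal K}(A/\theta)$ (clause 1 of Definition \ref{congoperator} is essential here), computing $g^*(\Delta_{A/\theta}) = g^*(\theta/\theta) = \theta/\sigma$, whence $\theta/\sigma \in {\cal K}(A/\sigma)$. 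Some argument of this kind --- exploiting the presheaf property at a morphism other than $p$ together with the axiom that $\Delta$ always belongs to ${\cal K}$ --- is indispensable; without it the implication $(1)\Rightarrow(3)$ is unproven and your cycle of implications does not close.
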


\begin{proof}
$1\Longrightarrow 2$) Let us suppose that ${\cal A}_{{\cal K}}$ is a category and ${\cal K}: {\cal A}_{\cal K} \rightarrow Set$ is a presheaf.
Let $A \in {\cal A}$, $\sigma \in Con(A)$ and $p: A \rightarrow A/\sigma$ be the natural ${\cal A}$-homomorphism. Note that, $Imag(p^*\upharpoonright_{{\cal K}(A/\sigma)}) = Imag({\cal K}(p)) \subseteq {\cal K}(A)$ because ${\cal K}$ is a presheaf. Then, by Proposition \ref{AUX1}, $p^*\upharpoonright_{{\cal K}(A/\sigma)}$ is an injective order homomorphism of the form $p^*\upharpoonright_{{\cal K}(A/\sigma)} :{\cal K}(A/\sigma) \rightarrow {\cal K}(A)\cap[\sigma, \nabla_A]$. We want to prove that $p^*\upharpoonright_{{\cal K}(A/\sigma)}$ is a surjective map. For this we need to show that if $\theta \in {\cal K}(A)\cap[\sigma, \nabla_A]$ then $\theta/\sigma \in {\cal K}(A/\sigma)$. Indeed: By Theorem \ref{CORRESP}-1, $A/\theta \cong_{\cal A}(A/\sigma)/(\theta/\sigma)$ and therefore, the natural ${\cal A}$-homomorphism $A/\sigma \rightarrow (A/\sigma)/(\theta/\sigma)$ can be identify to the  ${\cal A}_{\cal K}$-homomorphism $g:A/\sigma \rightarrow A/\theta$ such that $g(x_{\sigma}) = x_{\theta}$. By hypothesis we have that ${\cal K}(g) = g^*\upharpoonright_{{\cal K}(A/\theta)}: {\cal K}(A/\theta) \rightarrow {\cal K}(A/\sigma)$ and  $\Delta_{A/\theta} \in {\cal K}(A/\theta)$. Then
\begin{eqnarray*}
{\cal K}(A/\sigma) \ni g^*(\Delta_{A/\theta}) & = & g^*(\theta /\theta) = \{(x_\sigma , y_\sigma) \in (A/\sigma)^2: (g(x_\sigma), g(y_\sigma)) \in \theta /\theta \} \\
  & = & \{(x_\sigma , y_\sigma) \in (A/\sigma)^2: (x_\theta, y_\theta) \in \theta /\theta \} \\
 & = & \{(x_\sigma , y_\sigma) \in (A/\sigma)^2: (x, y) \in \theta \}\\
 & = & \theta/\sigma
\end{eqnarray*}
proving that $\theta/\sigma \in  {\cal K}(A/\sigma)$. Thus, by Proposition \ref{AUX1}, if $\theta \in {\cal K}(A)\cap[\sigma, \nabla_A]$ then, $\theta/\sigma \in  {\cal K}(A/\theta)$ and  $[{\cal K}(p)](\theta/\sigma) = p^*(\theta/\sigma) = \theta$ proving that $ p^*\upharpoonright_{{\cal K}(A/\sigma)}$ is surjective. Hence our claim.

$2\Longrightarrow 3$) Immediate form Proposition \ref{AUX1}.

$3\Longrightarrow 1$) We first note that for each $A \in {\cal A}$, $1_A \in Hom_{{\cal A}_{\cal K}}$ because $\Delta_A \in {\cal K}(A)$. Now we prove that the class $Hom_{{\cal A}_{\cal K}}$ is closed by compositions. Let $A \in {\cal K}$, $\sigma \in {\cal K}(A)$, $\theta/\sigma \in {\cal K}(A/\sigma)$ and let us consider the following diagram  $A \stackrel{p_1}{\rightarrow} A/\sigma \stackrel{p_2}{\rightarrow} (A/\sigma)/(\theta/\sigma)$ in $Hom_{{\cal A}_{\cal K}}$ where $p_1$ and $p_2$ are two natural ${\cal A}$-homomorphisms. By Theorem \ref{CORRESP}-1, $(A/\sigma)/(\theta/\sigma) \cong_{\cal A} A/\theta$ and, by hypothesis, $\theta \in {\cal K}(A)$. Then the composition $p_2 p_1 \in Hom_{{\cal A}_{\cal K}}$ proving that $Hom_{{\cal A}_{\cal K}}$ is closed by compositions. Hence ${\cal A}_{\cal K}$ is a category. Now we prove that  ${\cal K}: {\cal A}_{\cal K} \rightarrow Set$ is a presheaf.  Let $f:A \rightarrow B \in Hom_{{\cal A}_{\cal K}}$. We first show that ${\cal K}(f) = f^* \upharpoonright_{{\cal K}(B)}$ is a function of the form ${\cal K}(f) = {\cal K}(B) \rightarrow {\cal K}(A)$. Note that $f$ admits the following factorization in ${\cal A}$
\begin{center}
\unitlength=1mm
\begin{picture}(20,20)(0,0)
\put(8,16){\vector(3,0){5}} \put(2,10){\vector(0,-2){5}}
\put(10,4){\vector(1,1){7}}

\put(2,10){\makebox(13,0){$\equiv$}}

\put(2,16){\makebox(0,0){$A$}} \put(20,16){\makebox(0,0){$B$}}
\put(2,0){\makebox(0,0){$A/\sigma$}}
\put(2,20){\makebox(17,0){$f$}} \put(2,8){\makebox(-6,0){$p$}}
\put(18,2){\makebox(-4,3){$g$}}
\end{picture}
\end{center}
where $\sigma \in {\cal K}(A)$, $p$ is the natural ${\cal A}$-homomorphism and $g$ is an ${\cal A}$-isomorphism. By hypothesis and Theorem \ref{CORRESP}, $p^*: {\cal K}(A/\sigma) \rightarrow {\cal K}(A) \cap [\sigma, \nabla_A]$ is an order isomorphism and $g^*\upharpoonright_{{\cal K}(B)}: {\cal K}(B) \rightarrow {\cal K}(A/\sigma)$ is an order isomorphism because $g$ is a ${\cal A}$-isomorphism. Thus, by Proposition \ref{AUX}-2, 
$f^* = (pg)^* = p^*g^*$ and then, $f^*\upharpoonright_{{\cal K}(B)}$ is an order homomorphism from ${\cal K}(B)$ onto ${\cal K}(A)$. By Proposition \ref{AUX}-(2 and 3) we also note that ${\cal K}(-)$ is a contravariant functor. Hence  ${\cal K}: {\cal A}_{\cal K} \rightarrow Set$ is a presheaf.

\qed
\end{proof}

\begin{definition}
{\rm
Let ${\cal A}$ be a category of algebras. A congruences operator ${\cal K}$ over ${\cal A}$ satisfying the equivalent conditions of Proposition \ref{PRESHEAF} is called a {\it congruences presheaf}.   
}
\end{definition}

\begin{example}\label{VARIETYPREC} 
{\rm [{\it Presheaf $Con$}]
Let ${\cal A}$ be a category of algebras closed under homomorphic images. Let us define the class operator ${\cal A} \ni A \mapsto Con(A)$. It is not difficult to show that  $Con(-)$ is a congruences operator and that $Hom_{{\cal A}_{Con}}$ is the class of surjective ${\cal A}$-homomorphisms. Thus ${\cal A}_{Con}$ is a category. If we define $Con(f) = f^*$  then, by Proposition \ref{AUX}, $Con$ is a congruences presheaf. In particular $Con(-)$ is a congruences presheaf over varieties of algebras.      
}
\end{example}

\begin{example}\label{QUASIVARIETYPREC}
{\rm Let ${\cal A}$ be a quasivariety. For each $A\in {\cal A}$, let us consider the set of relative congruences of $A$, $Rel(A) = \{\theta \in Con(A): A/\theta \in {\cal A} \}$. Let us 
define the class operator ${\cal A}\ni A \mapsto Rel(A)$.  It is not difficult to prove that $Rel(-)$ is a congruences operator and that ${\cal A}_{Rel} = \langle Ob({\cal A}), Hom_{{\cal A}_{Rel}} \rangle$ is a category. We shall prove that if  $f:A \rightarrow B \in Hom_{{\cal A}_{Rel}}$ then, $Imag(f^*) \subseteq Rel(A)$ or equivalently, for each $\theta \in Rel(B)$, $A/f^*(\theta) \in {\cal A}$. Indeed: Let us consider a quasi equation $(\&_{i=1}^{n} r_i(\overline{x}) = s_i(\overline{x})) \Longrightarrow r(\overline{x}) = s(\overline{x}) $  holding in ${\cal A}$ where $\overline{x}$ is a vector of $k$ variables. Let $\overline{a}_{f^*(\theta)}$ be a vector of $k$ elements of the algebra $A/f^*(\theta)$ such that $A/f^*(\theta) \models \&_{i=1}^{n} r_i(\overline{a}_{f^*(\theta)}) = s_i(\overline{a}_{f^*(\theta)})$. Thus, by definition of $f^*$ in Eq.(\ref{UPF}), we have that $(f(s_i(\overline{a}),f(r_i(\overline{a})) = (s_i(f(\overline{a})),r_i(f(\overline{a})) \in \theta$ for $1\leq i \leq n$ and then, $B/\theta \models \&_{i=1}^{n} r_i(f(\overline{a}))_{\theta} = s_i(f(\overline{a}))_{\theta}$. Since $B/\theta \in {\cal A}$ and the quasi equation holds in ${\cal A}$,  $B/\theta \models s(f(\overline{a}))_{\theta} = r(f(\overline{a}))_{\theta}$. It implies that $(f(s(\overline{a})), f(r(\overline{a}))) \in \theta$ and then $(s(\overline{a}), r(\overline{a})) \in f^*(\theta)$. Hence $A/f^*(\theta) \models r(\overline{a}_{f^*(\theta)}) = s(\overline{a}_{f^*(\theta)})$. It proves that $A/f^*(\theta) \in {\cal A}$ and, by Proposition \ref{AUX}, $Rel(-)$ is a congruences presheaf.
}
\end{example}

\begin{prop}\label{CORRESP2}
Let ${\cal A}$ be a category of algebras, ${\cal K}$ be a congruences presheaf and $A \in {\cal A}$. If $\sigma \in {\cal K}(A)$, $\theta \in {\cal K}(A)\cap [\sigma, \nabla_A]$ and $A \cong_{{\cal A}} A/\theta$ then there exists $\theta' \in {\cal K}(A/\sigma)$ such that $A \cong (A/\sigma)/\theta'$.

\end{prop}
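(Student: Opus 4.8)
The plan is to take $\theta' = \theta/\sigma$ as the candidate congruence on $A/\sigma$ and then verify the two required properties separately: that $\theta'$ belongs to ${\cal K}(A/\sigma)$, and that $(A/\sigma)/\theta'$ is ${\cal A}$-isomorphic to $A$.

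First I would observe that the hypothesis places $\theta$ in ${\cal K}(A) \cap [\sigma, \nabla_A]$, so $\sigma \subseteq \theta$ and the quotient congruence $\theta/\sigma$ given by Eq.(\ref{QUOTCONG}) is well defined on $A/\sigma$. Since ${\cal K}$ is a congruences presheaf, it satisfies the equivalent conditions of Proposition \ref{PRESHEAF}; applying condition 3 directly yields $\theta/\sigma \in {\cal K}(A/\sigma)$. This establishes that $\theta' = \theta/\sigma$ is a legitimate member of ${\cal K}(A/\sigma)$.

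Next I would chain the two isomorphisms. Because $\sigma \subseteq \theta$, Theorem \ref{CORRESP}-1 provides the ${\cal A}$-isomorphism $(A/\sigma)/(\theta/\sigma) \cong_{\cal A} A/\theta$. Combining this with the hypothesis $A \cong_{\cal A} A/\theta$ and the transitivity of $\cong_{\cal A}$ gives $A \cong_{\cal A} (A/\sigma)/\theta'$, which is exactly the desired conclusion.

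The argument is essentially a bookkeeping step that assembles structural facts already developed: the only genuine content is recognizing that the congruences-presheaf property (Proposition \ref{PRESHEAF}-3) is precisely what guarantees membership of $\theta/\sigma$ in ${\cal K}(A/\sigma)$, since without it the candidate $\theta'$ might fail to lie in the operator. I therefore expect no real obstacle beyond verifying this membership; the isomorphism portion follows immediately from the correspondence theorem and transitivity.
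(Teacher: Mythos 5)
Your proposal is correct and follows exactly the paper's own argument: take $\theta' = \theta/\sigma$, invoke Proposition \ref{PRESHEAF}-3 for membership in ${\cal K}(A/\sigma)$, and chain Theorem \ref{CORRESP}-1 with the hypothesis $A \cong_{\cal A} A/\theta$. There is nothing to add or correct.
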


\begin{proof}
Since $\theta \in {\cal K}(A)\cap [\sigma, \nabla_A]$, by Proposition \ref{PRESHEAF}-3, $\theta'= \theta/\sigma \in {\cal K}(A/\sigma)$. Then, by Theorem \ref{CORRESP}-1, $(A/\sigma)/\theta' = (A/\sigma)/(\theta/\sigma) \cong_{{\cal A}} A/\theta \cong A$.  

\qed
\end{proof}

\begin{definition}
{\rm Let ${\cal A}$ be a category of algebras and ${\cal K}$ be a congruences presheaf. An algebra $A \in {\cal A}$ has the {\it Cantor-Bernstein-Schr\"oder property} with respect to  ${\cal K}$ ({\it $CBS_{{\cal K}}$-property} for short)  iff the following holds: Given $B \in {\cal A}$ and $\theta_B \in {\cal K}(B)$ such that there is $\theta_A \in {\cal K}(A)$ with $A\cong_{\cal A} B/\theta_B$ and $B\cong_{\cal A} A/\theta_A$ it follows that $A \cong_{\cal A} B$.
}
\end{definition}

\begin{theo}\label{CBS1}
Let ${\cal A}$ be a category of algebras and ${\cal K}$ be a congruences presheaf. Then, the following conditions are equivalent for each $A\in {\cal A}$:

\begin{enumerate}
\item
$A$ has the $CBS_{{\cal K}}$-property.

\item
If $\theta \in {\cal K}(A)$ and $A \cong_{\cal A} A/\theta$ then, for all $\sigma \in {\cal K}(A)$ such that $\sigma \subseteq \theta$ we have that $A \cong_{\cal A} A/\sigma$.

\end{enumerate}
\end{theo}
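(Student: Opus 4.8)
The plan is to establish the two implications separately, using the interval correspondence of Theorem \ref{CORRESP} together with the presheaf behaviour encoded in Propositions \ref{AUX}, \ref{AUX1} and \ref{PRESHEAF} to translate quotients of $A$ into quotients of $A/\sigma$ and back.

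For $(1)\Rightarrow(2)$, I would assume $\theta\in{\cal K}(A)$ with $A\cong_{\cal A}A/\theta$ and fix $\sigma\in{\cal K}(A)$ with $\sigma\subseteq\theta$. Put $B=A/\sigma$, which lies in ${\cal A}$ by the second clause of Definition \ref{congoperator}. Taking $\theta_A=\sigma$ immediately gives $B\cong_{\cal A}A/\theta_A$. Since $\theta\in{\cal K}(A)\cap[\sigma,\nabla_A]$ and $A\cong_{\cal A}A/\theta$, Proposition \ref{CORRESP2} furnishes $\theta_B=\theta'\in{\cal K}(A/\sigma)={\cal K}(B)$ with $A\cong_{\cal A}(A/\sigma)/\theta'=B/\theta_B$. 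Thus $A$ and $B$ satisfy the hypotheses of the $CBS_{\cal K}$-property, and applying (1) yields $A\cong_{\cal A}B=A/\sigma$, as required.

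For $(2)\Rightarrow(1)$, I would start from $B\in{\cal A}$, $\theta_B\in{\cal K}(B)$ and $\theta_A\in{\cal K}(A)$ with $A\cong_{\cal A}B/\theta_B$ and $B\cong_{\cal A}A/\theta_A$, and aim to reduce the data to a single quotient of $A$ sitting above $\theta_A$. Fix an ${\cal A}$-isomorphism $g:B\to A/\theta_A$. By Proposition \ref{AUX}-4 and the third clause of Definition \ref{congoperator}, the map $g_*$ sends $\theta_B$ to $\psi:=g_*(\theta_B)\in{\cal K}(A/\theta_A)$ and induces an isomorphism $B/\theta_B\cong_{\cal A}(A/\theta_A)/\psi$, so that $A\cong_{\cal A}(A/\theta_A)/\psi$. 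Now I would transport $\psi$ back along the natural homomorphism $p:A\to A/\theta_A$: by Propositions \ref{PRESHEAF}-2 and \ref{AUX1}, $p^*$ restricts to an order isomorphism of ${\cal K}(A/\theta_A)$ onto ${\cal K}(A)\cap[\theta_A,\nabla_A]$ with $p^*=u_{\theta_A}^{-1}$, so $\theta:=p^*(\psi)\in{\cal K}(A)$ satisfies $\theta_A\subseteq\theta$ and $\theta/\theta_A=\psi$. Theorem \ref{CORRESP}-1 then gives $A/\theta\cong_{\cal A}(A/\theta_A)/(\theta/\theta_A)=(A/\theta_A)/\psi\cong_{\cal A}A$. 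Hence $\theta\in{\cal K}(A)$ with $A\cong_{\cal A}A/\theta$, and since $\theta_A\in{\cal K}(A)$ with $\theta_A\subseteq\theta$, hypothesis (2) yields $A\cong_{\cal A}A/\theta_A\cong_{\cal A}B$.

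The routine direction is $(1)\Rightarrow(2)$, where Proposition \ref{CORRESP2} does essentially all the work. The delicate point, and the step I expect to require the most care, is the chain of identifications in $(2)\Rightarrow(1)$: one must move $\theta_B$ through the isomorphism $g$ into a congruence on $A/\theta_A$ and then pull it back through $p$ to a congruence $\theta$ of $A$ lying above $\theta_A$, checking at each stage that the congruence stays inside ${\cal K}$ so that hypothesis (2) becomes applicable. This is exactly what the presheaf axioms of Proposition \ref{PRESHEAF} guarantee, but the bookkeeping linking $\psi=g_*(\theta_B)$, $\theta=p^*(\psi)$ and the quotient identifications of Theorem \ref{CORRESP} is where an error would most easily creep in.
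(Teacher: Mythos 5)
Your proof is correct and follows essentially the same route as the paper: $(1)\Rightarrow(2)$ via Proposition \ref{CORRESP2} applied to $B=A/\sigma$, and $(2)\Rightarrow(1)$ by transporting $\theta_B$ through $g_*$ into ${\cal K}(A/\theta_A)$ and lifting it to some $\theta\in{\cal K}(A)\cap[\theta_A,\nabla_A]$ with $A\cong_{\cal A}A/\theta$. The only cosmetic difference is that you invoke Proposition \ref{PRESHEAF}-2 together with Proposition \ref{AUX1} where the paper cites Proposition \ref{PRESHEAF}-3, which are equivalent formulations of the same lifting fact.
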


\begin{proof}
{\rm $1 \Longrightarrow 2$)  Let $\sigma , \theta \in {\cal K}(A)$ such that $\sigma \subseteq \theta$ and $A \cong_{\cal A} A/\theta$. Let $B = A/\sigma$. 
Note that $\theta \in {\cal K}(A) \cap [\sigma, \nabla_A]$ then, by Proposition \ref{CORRESP2}, there exists $\theta_B \in {\cal K}(A/\sigma)= {\cal K}(B) $ such that $A \cong_{\cal A} B/\theta_B$. Since $A$ has the $CBS_{{\cal K}}$-property we have that $A \cong_{\cal A} B = A/\sigma$.       

$2 \Longrightarrow 1$) 
Let $B$ in ${\cal A}$, $\sigma_A \in {\cal K}(A)$, $\sigma_B \in {\cal K}(B)$. Suppose that there exists two ${\cal A}$-isomorphisms $f: A\rightarrow B/\sigma_B$ and  $g: B\rightarrow A/\sigma_A$. 

By Proposition \ref{AUX}-4, $g_*(\sigma_B) \in {\cal K}(A/\sigma_A)$ and there exists a ${\cal A}$-isomorphism $g': B/\sigma_B \rightarrow (A/\sigma_A)/g_*(\sigma_B)$. Let us consider the following composition of ${\cal A}$-isomorphisms: 
\begin{equation}\label{COMPCBS1}
A \stackrel{f}{\rightarrow} B/\sigma_B \stackrel{g'}{\rightarrow} (A/\sigma_A)/g_*(\sigma_B).
\end{equation}
Note that $g_*(\sigma_B) = \theta/\sigma_A$ for some $\theta \in Con(A)$ and, by Proposition \ref{PRESHEAF}-3, $\theta \in {\cal K}(A) \cap[\sigma_A, \nabla_A]$. Thus, by Theorem \ref{CORRESP}-1, we have that $(A/\sigma_A)/g_*(\sigma_B) = (A/\sigma_A)/(\theta/\sigma_A) \cong_{\cal A} A/\theta$ and the diagram of ${\cal A}$-isomorphisms given in Eq.(\ref{COMPCBS1}) can be seen as  
\begin{equation}\label{COMPCBS2}
A \stackrel{f}{\rightarrow} B/\sigma_B \stackrel{g'}{\rightarrow} A/\theta.
\end{equation}
Therefore $A \cong_{\cal A} A/\theta$ where $\theta \in {\cal K}(A) \cap[\sigma_A, \nabla_A]$. Since $\sigma_A \subseteq \theta$, by hypothesis, $A \cong_{\cal A} A/\sigma_{A} \cong_{\cal A} B$. Hence $A$ has the $CBS_{{\cal K}}$-property.  
}

\qed
\end{proof}

\begin{rema}\label{NONTRIVIALCBS}
{\rm Let us notice that, by the condition 2 of the Theorem \ref{CBS1}, if there would be no $\theta \in {\cal K}(A)$ such that $A \cong_{\cal A} A/\theta$ then the algebra $A$ trivially has the $CBS_{{\cal K}}$-property. Then we say that {\it $A$ satisfies the $CBS_{{\cal K}}$-property in a non trivial way} whenever this property is satisfied and there exists $\theta \in {\cal K}(A)$ such that $A \cong_{\cal A} A/\theta$.} 
\end{rema}

We conclude this section with an example illustrating our abstract framework for the CBS-theorem.

\begin{example}\label{PSEUDOSIMPLE}
{\rm [Pseudo-simple algebras] An algebra $A$ is called {\it pseudo-simple} \cite{MONK} iff $Card(A)> 1$ and for every $\sigma \in Con(A) - \{\nabla_A\}$, $A/\sigma \cong A$. Let ${\cal A}$ be category of algebras closed under homomorphic images and let us consider the congruences presheaf $Con(-)$ (see Example \ref{VARIETYPREC}). Then, by Theorem \ref{CBS1}, pseudo-simple algebras of ${\cal A}$ satisfy the $CBS_{Con}$-property.

Concrete examples of these algebras, can be found in the variety ${\cal G}rp$ of groups. Indeed, a {\it quasi-cyclic group} is an Abelian group which is isomorphic to $Z(p^\infty)$ for some prime number $p$. They are pseudo-simple algebras in ${\cal G}rp$. In this way quasi-cyclic groups have the $CBS_{Con}$-property. 
}
\end{example}

\section{Factor congruences presheaves} \label{FCPRES}

In this section we introduce and study a special case of congruences presheaf related to factor congruences. In this framework necessary and sufficient conditions for the validity of
CBS-Theorem are established.

\begin{definition}\label{FACTORCPRESHEAF}
{\rm
Let ${\cal A}$ be a category of algebras. A {\it factor congruences presheaf} is a congruences presheaf ${\cal K}$ such that for each  $A\in {\cal A}$,

\begin{enumerate}

\item
${\cal K}(A) \subseteq FC(A)$.

\item
For each $\theta \in {\cal K}(A)$ there exists $\neg \theta \in {\cal K}(A)$, such that $(\theta, \neg \theta)$ is a pair of factor congruences on $A$.

\item
If $\sigma \in {\cal K}(A)$, $\theta \in {\cal K}(A) \cap [\sigma, \nabla_A]$ and $(\theta, \neg \theta)$ is a pair of factor congruences in ${\cal K}(A)$ then 
$(\theta/\sigma, (\neg \theta \lor \sigma)/\sigma)$ is a pair of factor congruences in ${\cal K}(A/\sigma)$.  

\end{enumerate}
}
\end{definition}

By item 2 in the above definition, $\nabla_A \in {\cal K}(A)$ because $\Delta_A \in {\cal K}(A)$ and, by Proposition \ref{PRESHEAF}, the following result is immediate.

\begin{prop}\label{AUX4}
Let ${\cal A}$ be a category of algebras and ${\cal K}$ be a factor congruences presheaf. Let $A\in {\cal A}$, $\sigma \in {\cal K}(A)$, $\theta \in {\cal K}(A) \cap [\sigma, \nabla_A]$ and $(\theta, \neg \theta)$ be a pair of factor congruences in ${\cal K}(A)$. Then $\neg \theta \lor \sigma \in {\cal K}(A) \cap [\sigma, \nabla_A]$. 

\qed
\end{prop}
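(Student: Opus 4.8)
Let ${\cal A}$ be a category of algebras and ${\cal K}$ be a factor congruences presheaf. Let $A\in {\cal A}$, $\sigma \in {\cal K}(A)$, $\theta \in {\cal K}(A) \cap [\sigma, \nabla_A]$ and $(\theta, \neg \theta)$ be a pair of factor congruences in ${\cal K}(A)$. Then $\neg \theta \lor \sigma \in {\cal K}(A) \cap [\sigma, \nabla_A]$.

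Let me sketch a proof.

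The plan is to use the characterization in Proposition \ref{PRESHEAF}-3: an element of $[\sigma,\nabla_A]$ lies in ${\cal K}(A)$ exactly when its quotient by $\sigma$ lies in ${\cal K}(A/\sigma)$. So the whole task reduces to producing the factor congruence $(\neg\theta\lor\sigma)/\sigma$ inside ${\cal K}(A/\sigma)$ and then pulling it back.

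First I would observe that $\neg\theta\lor\sigma$ trivially belongs to $[\sigma,\nabla_A]$, since it is an upper bound of $\sigma$ by construction, so the only real content is membership in ${\cal K}(A)$. Next I would invoke condition 3 of Definition \ref{FACTORCPRESHEAF}: because $\sigma\in{\cal K}(A)$, $\theta\in{\cal K}(A)\cap[\sigma,\nabla_A]$, and $(\theta,\neg\theta)$ is a pair of factor congruences in ${\cal K}(A)$, the pair $(\theta/\sigma,(\neg\theta\lor\sigma)/\sigma)$ is a pair of factor congruences in ${\cal K}(A/\sigma)$. In particular $(\neg\theta\lor\sigma)/\sigma\in{\cal K}(A/\sigma)$.

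Finally I would apply the presheaf characterization. Setting $\rho=\neg\theta\lor\sigma$, we have $\rho\in[\sigma,\nabla_A]$ and $\rho/\sigma\in{\cal K}(A/\sigma)$; hence by Proposition \ref{PRESHEAF}-3 (the equivalence ``$\rho\in{\cal K}(A)\cap[\sigma,\nabla_A]$ iff $\rho/\sigma\in{\cal K}(A/\sigma)$''), we conclude $\neg\theta\lor\sigma\in{\cal K}(A)\cap[\sigma,\nabla_A]$, as desired.

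I do not expect a serious obstacle here: the argument is essentially a bookkeeping combination of Definition \ref{FACTORCPRESHEAF}-3 (which hands us the factor-congruence pair in the quotient, and in particular places $(\neg\theta\lor\sigma)/\sigma$ in ${\cal K}(A/\sigma)$) with the lattice-correspondence characterization of a congruences presheaf. The only point requiring a moment's care is checking that $\neg\theta\lor\sigma$ genuinely sits in the interval $[\sigma,\nabla_A]$ so that Proposition \ref{PRESHEAF}-3 applies, but this is immediate since $\sigma\subseteq\neg\theta\lor\sigma\subseteq\nabla_A$. This is exactly why the paper remarks that the result is ``immediate'' from Proposition \ref{PRESHEAF}.
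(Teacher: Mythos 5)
Your proof is correct and follows exactly the route the paper intends: Definition \ref{FACTORCPRESHEAF}-3 places $(\neg\theta\lor\sigma)/\sigma$ in ${\cal K}(A/\sigma)$, and the correspondence in Proposition \ref{PRESHEAF}-3 pulls it back to ${\cal K}(A)\cap[\sigma,\nabla_A]$. This is precisely the ``immediate'' argument the paper has in mind, with the bookkeeping (membership of $\neg\theta\lor\sigma$ in the interval $[\sigma,\nabla_A]$) spelled out.
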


Let ${\cal A}$ be a category of algebras such that for each $A \in {\cal A}$ and $\sigma \in FC(A)$, $A/\sigma \in {\cal A}$. The, by Proposition \ref{AUX}-5, it is immediate that the class operator 
\begin{equation}\label{CLASOPFACT}
{\cal A} \ni A \mapsto FC(A)
\end{equation}
is a congruence operator. The following proposition provides a sufficient condition for $FC(-)$ to be a congruences presheaf.

\begin{prop}\label{MODPERM}
Let ${\cal A}$ be a category of algebras such that for each $A \in {\cal A}$ and $\sigma \in FC(A)$, $A/\sigma \in {\cal A}$. If ${\cal A}$ is congruence modular or congruence permutable then $FC(-)$ is a congruences presheaf.
\end{prop}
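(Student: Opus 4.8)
The plan is to verify condition 3 of Proposition \ref{PRESHEAF}, since the class operator $A \mapsto FC(A)$ has already been observed to be a congruences operator; establishing condition 3 then upgrades it to a congruences presheaf. Concretely, I must show that for every $A \in {\cal A}$ and every $\sigma \in FC(A)$ one has $\theta \in FC(A) \cap [\sigma,\nabla_A]$ if and only if $\theta/\sigma \in FC(A/\sigma)$.

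The converse implication needs no extra hypothesis on ${\cal A}$: if $\theta \in [\sigma,\nabla_A]$ and $\theta/\sigma \in FC(A/\sigma)$, then Proposition \ref{BFCQUOT} already yields $\theta \in FC(A)$, whence $\theta \in FC(A)\cap[\sigma,\nabla_A]$. So the real content lies in the forward implication. I would fix $\theta \in FC(A)\cap[\sigma,\nabla_A]$ with a factor complement $\neg\theta$, so that $(\theta,\neg\theta)$ is a pair of factor congruences, and take $(\neg\theta\lor\sigma)/\sigma$ as the natural candidate for the factor complement of $\theta/\sigma$ in $A/\sigma$. It then remains to check the three defining properties of the pair $(\theta/\sigma,\ (\neg\theta\lor\sigma)/\sigma)$.

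For the join, both $\theta$ and $\neg\theta\lor\sigma$ lie in $[\sigma,\nabla_A]$, so the lattice isomorphism $u_\sigma$ of Theorem \ref{CORRESP}-2 gives $(\theta/\sigma)\lor((\neg\theta\lor\sigma)/\sigma) = (\theta\lor\neg\theta\lor\sigma)/\sigma = \nabla_A/\sigma = \nabla_{A/\sigma}$. For permutability I would argue in $A$ first: since $(\theta,\neg\theta)$ permutes, $\theta\circ\neg\theta = \theta\lor\neg\theta = \nabla_A$, and from $\neg\theta\subseteq\neg\theta\lor\sigma$ it follows that $\theta\circ(\neg\theta\lor\sigma)=\nabla_A$. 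Hence by Theorem \ref{CORRESP}-3 one obtains $(\theta/\sigma)\circ((\neg\theta\lor\sigma)/\sigma) = \nabla_{A/\sigma} = (\theta/\sigma)\lor((\neg\theta\lor\sigma)/\sigma)$, so the two quotient congruences permute by \cite[Theorem 5.9]{Bur}. Note that these two steps require neither modularity nor permutability of ${\cal A}$.

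The crux, and the only place the hypothesis on ${\cal A}$ enters, is the meet condition, which via $u_\sigma$ reduces to the identity $\theta\cap(\neg\theta\lor\sigma)=\sigma$ in $Con(A)$; this gives $(\theta/\sigma)\cap((\neg\theta\lor\sigma)/\sigma) = \sigma/\sigma = \Delta_{A/\sigma}$. When ${\cal A}$ is congruence modular, since $\sigma\subseteq\theta$ the modular law applies directly: $\theta\cap(\neg\theta\lor\sigma)=(\theta\cap\neg\theta)\lor\sigma = \Delta_A\lor\sigma = \sigma$. When ${\cal A}$ is congruence permutable, one may either invoke that permutability forces $Con(A)$ to be modular and reuse this computation, or argue directly by writing $\neg\theta\lor\sigma=\neg\theta\circ\sigma$ and chasing an element of $\theta\cap(\neg\theta\circ\sigma)$: if $(x,y)\in\theta$ and $(x,w)\in\neg\theta$, $(w,y)\in\sigma\subseteq\theta$, then $(x,w)\in\theta\cap\neg\theta=\Delta_A$ forces $x=w$, so $(x,y)\in\sigma$. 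Either way $\theta\cap(\neg\theta\lor\sigma)=\sigma$. Combining the three properties shows $\theta/\sigma\in FC(A/\sigma)$, which completes condition 3 and hence the proposition. I expect this meet identity to be the main obstacle, precisely because it is the single step that fails for a general congruence lattice and is rescued exactly by modularity or permutability, while the join and permutability conditions, and the whole converse direction, hold unconditionally.
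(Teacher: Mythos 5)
Your proof is correct and follows essentially the same route as the paper: verifying condition 3 of Proposition \ref{PRESHEAF} with $(\neg\theta\lor\sigma)/\sigma$ as the candidate factor complement, using modularity for the meet identity $\theta\cap(\neg\theta\lor\sigma)=\sigma$, using $\nabla_A=\theta\circ\neg\theta\subseteq\theta\circ(\neg\theta\lor\sigma)$ together with Theorem \ref{CORRESP}-3 for the relational product, invoking Proposition \ref{BFCQUOT} for the converse direction, and handling the permutable case by reduction to modularity. Your extra direct element-chasing argument for the permutable case is a valid minor alternative to that reduction, but otherwise the two arguments coincide.
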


\begin{proof}
Let us assume that ${\cal A}$ is congruence modular. Let $A \in {\cal A} $, $\sigma \in FC(A)$, $\theta \in FC(A) \cap [\sigma, \nabla_A]$ and $(\theta, \neg \theta)$ be a pair of factor congruences in $FC(A)$. 

We first prove that $(\theta /\sigma, \neg \theta \lor \sigma /\sigma)$ is a pair of factor congruences in $FC(A/\sigma)$.  By the modularity $\theta \cap (\sigma \lor \neg \theta) = \sigma \lor (\theta \cap \neg \theta) = \sigma \lor \Delta_A = \sigma$ because $\sigma \subseteq \theta$. Then, by Theorem \ref{CORRESP}-2, $\theta/\sigma \cap (\neg \theta \lor \sigma)/\sigma = \Delta_{A/\sigma}$. We also note that $\nabla_A = \theta \circ \neg \theta \subseteq  \theta \circ (\neg \theta \lor \sigma)$. Then, by Theorem \ref{CORRESP}-3, $\theta/\sigma \circ (\neg \theta \lor \sigma)/\sigma = \nabla_{A/\sigma}$. Thus, $(\theta/\sigma, (\neg \theta \lor \sigma)/\sigma )$ is a pair of factor congruences on $A/\sigma$ and $\theta/\sigma \in FC(A/\sigma)$. 
Now if we suppose  that $\theta/\sigma \in FC(A/\sigma)$ then, by Proposition \ref{BFCQUOT}, $\theta \in FC(A)$. Hence, by Proposition \ref{PRESHEAF}, $FC(-)$ is a factor congruences presheaf.  
Let us notice that if ${\cal A}$ is a category of congruence permutable algebras then, by the Birkhoff theorem (see \cite[Proposition 5.10]{Bur}), ${\cal A}$ is congruence modular. Hence or claim.

\qed
\end{proof}

\begin{example}\label{INJMODDIV}
{\rm [$CBS_{FC}$-property: injective modules and divisible groups] Let ${\cal M}od_R$ be the variety of modules over the ring $R$ and ${\cal A}b$ be the variety of Abelian groups. Let us notice that divisible groups are the injective objects in ${\cal A}b$.  We will indistinctly denote the varieties ${\cal M}od_R$ and ${\cal A}b$ with ${\cal A}$. In the variety ${\cal A}$, the notion of finite direct sum and finite direct product coincides. Thus, for each $A \in {\cal A}$, $\langle FC(A), \subseteq \rangle $ is order reverse isomorphic to the set of direct factor sub algebras of $A$ denoted by  $\langle DF(A), \subseteq \rangle $. It is well known that ${\cal A}$ is a congruence permutable variable and then, by Proposition \ref{MODPERM},  $FC(-)$ is a congruences presheaf.

Let $A$ be an injective object in ${\cal A}$. We shall prove that $A$ has the $CBS_{FC}$-property. In order to do this, by Theorem \ref{CBS1}, we have to show that: for $I, K \in DF(A)$ such that $I$ is a sub algebra of $K$, if $A \cong_{\cal A} I$ then $A \cong_{\cal A} K$. 

Indeed: Let $f:I \rightarrow A$ be a ${\cal A}$-isomorphism. Since $A$ is injective, there exists ${\cal A}$-homomoprhism $g:K \rightarrow A$ such that the following diagram commute       
    
\begin{center}
\unitlength=1mm
\begin{picture}(20,20)(0,0)
\put(8,16){\vector(3,0){5}} \put(2,10){\vector(0,-2){5}}
\put(10,4){\vector(1,1){7}}

\put(2,10){\makebox(13,0){$\equiv$}}

\put(2,16){\makebox(0,0){$I$}} \put(20,16){\makebox(0,0){$A$}}
\put(2,0){\makebox(0,0){$K$}}
\put(2,20){\makebox(17,0){$f$}} \put(2,8){\makebox(-6,0){$1_I$}}
\put(18,2){\makebox(-4,3){$g$}}
\end{picture}
\end{center}
Let us notice that  $g 1_I$ is an injective ${\cal A}$-homomoprhism. Thus, if we consider the following composition $K \stackrel{f^{-1}\upharpoonright_{K}}{\rightarrowtail} I \stackrel{g1_I }{\rightarrowtail} A$, by the commutativity of the above diagram, we have that $ A \supseteq K \ni x = f (f^{-1}(x)) = g 1_I (f^{-1}(x)) $  proving that the diagram $K \stackrel{f^{-1}\upharpoonright_{K}}{\rightarrowtail} I \stackrel{g1_I }{\rightarrowtail} K$ is the identity $1_K$. It implies that $g 1_I $ is also a surjective ${\cal A}$-homomoprhism and $I \cong_{\cal A} K $. Hence $A \cong_{\cal A} K $ and $A$ has the $CBS_{FC}$-property. Since $A$ is an injective object then the denumerable direct product $B = \prod_{{\mathbb N}}A$ is injective in ${\cal A}$. Thus, by Proposition \ref{NDIRECTPRODUCT}, there exists $\sigma \in FC(B)$ such that $B \cong_{\cal A} B/\sigma$. In this way $B$ satisfies the $CBS_{FC}$-property in a non trivial way.
}
\end{example}

Now we study a necessary a sufficient condition for the validity of the $CBS$-property respect to a factor congruences presheaf. \\

Let ${\cal A}$ be a category of algebras and ${\cal K}$ be a factor congruences presheaf.  Let $A \in {\cal A}$, $\theta \in {\cal K}(A)$ and let us suppose that there exists an ${\cal A}$-isomorphism  $f:A\rightarrow A/\theta$. By Theorem \ref{CORRESP}-2 and Proposition \ref{AUX}-4 let us consider the $\langle \nabla, \Delta, \subseteq \rangle$-isomorphism $\hat{f} = u_\theta^{-1}f_*$ i.e.,  
\begin{equation}\label{fWIDE}
\hat{f}:  {\cal K}(A)  \stackrel{f_*}{\rightarrow} {\cal K}(A/\theta) \stackrel{u_\theta^{-1}}{\rightarrow} {\cal K}(A)\cap[\theta, \nabla_A].
\end{equation}
If  $\sigma \in {\cal K}(A)$ such that $\sigma \subseteq \theta$ then we define the following set: 
\begin{equation}\label{SIGMAIS}
\langle \sigma \rangle_{\theta}= \{\zeta \in [\Delta_A,\theta]\cap {\cal K}(A): A/\sigma \cong_{\cal A} A/\zeta\}.
\end{equation}
If $\zeta \in \langle \sigma \rangle_{\theta}$ then we recursively define the following sequences of congruences: 
\begin{align}\label{ALIN1}
\sigma_0 &= \Delta_A \,,&   \nonumber \\     
\sigma_1 &= \zeta \,,&                 \theta_1 &= f_*(\sigma_0) =  \theta/\theta \,,\ \nonumber\\    
\sigma_2 &= u_\theta^{-1}(\theta_1) = \theta \,,& \theta_2 &= f_*(\sigma_1) \,,\ \nonumber \\
\sigma_3 &= u_\theta^{-1}(\theta_2) \,,&          \theta_3 &= f_*(\sigma_2) \,,\ \nonumber \\
         &\vdots                             \,&                   &\vdots               \,\ \nonumber \\
\sigma_{n+1} &= u_\theta^{-1}(\theta_n) \,,&      \theta_{n+1} &= f_*(\sigma_n) \,.\
\end{align}
By Eq.(\ref{fWIDE}), $(\sigma_n)_{n\in {\mathbb{N}}}$ is a sequence in ${\cal K}(A)$.

\begin{prop}\label{INCREASINGSIGMAN}
Let ${\cal A}$ be a category of algebras, ${\cal K}$ be a factor congruences presheaf, $A \in {\cal A}$ and $\theta \in {\cal K}(A)$ such that 
there exists an ${\cal A}$-isomorphism  $f:A\rightarrow A/\theta$. Let us consider the sequence $(\sigma_n)_{n\in {\mathbb{N}}}$ in ${\cal K}(A)$
given in {\rm Eq.(\ref{ALIN1})}. Then: 
\begin{enumerate}

\item
$\hat{f}(\sigma_n) = \sigma_{n+2}$,

\item
$(\sigma_n)_{n \in {\mathbb{N}}} $ is an increasing sequence in ${\cal K}(A)$. In particular, if $\Delta_A < \zeta$ then $(\sigma_n)_{n \in {\mathbb{N}}} $ is strictly increasing.

\end{enumerate}
\end{prop}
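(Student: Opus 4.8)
The plan is to treat the two items separately: the first is essentially a bookkeeping identity extracted from the recursion, while the second is a two-step induction that rides on the order-isomorphism property of $\hat{f}$.

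For item 1 I would simply unwind the definitions in Eq.(\ref{ALIN1}). By construction $\theta_{n+1} = f_*(\sigma_n)$ and $\sigma_{n+2} = u_\theta^{-1}(\theta_{n+1})$, so that
\[
\hat{f}(\sigma_n) = u_\theta^{-1}(f_*(\sigma_n)) = u_\theta^{-1}(\theta_{n+1}) = \sigma_{n+2}
\]
for every $n$. This is purely formal once one has Eq.(\ref{fWIDE}) and the recursion, and needs no case analysis; the only sanity check is that the base instances match the explicit values, i.e. $\hat{f}(\Delta_A) = u_\theta^{-1}(\Delta_{A/\theta}) = \theta = \sigma_2$, which holds because $f_*$ carries $\Delta_A$ to $\Delta_{A/\theta} = \theta/\theta$.

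For the increasing part of item 2 I would run an induction with step two, using that by Eq.(\ref{fWIDE}) the map $\hat{f}$ is an order isomorphism and hence order preserving. The two base cases are $\sigma_0 = \Delta_A \subseteq \zeta = \sigma_1$ (trivially, as $\Delta_A$ is least) and $\sigma_1 = \zeta \subseteq \theta = \sigma_2$ (since $\zeta \in [\Delta_A,\theta]$ by the definition of $\langle \sigma \rangle_\theta$ in Eq.(\ref{SIGMAIS})). For the inductive step, assuming $\sigma_n \subseteq \sigma_{n+1}$, I apply $\hat{f}$ and invoke item 1 to obtain $\sigma_{n+2} = \hat{f}(\sigma_n) \subseteq \hat{f}(\sigma_{n+1}) = \sigma_{n+3}$. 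This advances the induction by two, and together with the two base cases yields $\sigma_n \subseteq \sigma_{n+1}$ for all $n$.

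The strict version is the delicate point and the main obstacle. Because $\hat{f}$ is an order isomorphism it preserves strict inclusions, so the very same two-step induction upgrades to strictness as soon as both base inequalities are strict. The hypothesis $\Delta_A < \zeta$ supplies the first, $\sigma_0 < \sigma_1$. The subtlety is the second base case $\sigma_1 < \sigma_2$, that is $\zeta < \theta$, since a priori I only know $\zeta \subseteq \theta$; note that strictness genuinely needs this, as the recursion relates terms only at distance two and thus propagates each parity separately. I would dispose of it by observing that $\zeta = \theta$ cannot occur in the nondegenerate situation: if $\zeta = \theta$ then $A/\sigma \cong_{\cal A} A/\zeta = A/\theta \cong_{\cal A} A$ would already give $A\cong_{\cal A}A/\sigma$ outright, so one may assume $\zeta \neq \theta$, forcing $\zeta < \theta$ and hence $\sigma_1 < \sigma_2$. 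With both strict base inequalities in place, repeated application of $\hat{f}$ carries strictness through both parities and shows that $(\sigma_n)_{n\in{\mathbb N}}$ is strictly increasing.
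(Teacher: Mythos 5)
Your proof of item 1 and of the monotonicity claim in item 2 is correct and follows essentially the paper's own route: unwind the recursion of Eq.(\ref{ALIN1}) to get $\hat{f}(\sigma_n)=\sigma_{n+2}$, then induct using the fact that $\hat{f}$ is an order isomorphism. Your uniform two-step induction from the two base cases $\sigma_0\subseteq\sigma_1$ and $\sigma_1\subseteq\sigma_2$ is actually tidier than the paper's, which splits off the case $\zeta=\Delta_A$ and there asserts that $\sigma_n=\theta$ for all $n\geq 2$; that assertion is false unless $\theta=\Delta_A$ (since $f$ is injective, $\hat{f}(\theta)=\theta$ forces $\theta=\Delta_A$; in general one only gets $\sigma_{2n}=\sigma_{2n+1}=\hat{f}^{\,n-1}(\theta)$, which is still increasing because the image of $\hat{f}$ lies in $[\theta,\nabla_A]$), though the monotonicity conclusion survives.

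Concerning strictness, what you flagged as the delicate point is a genuine gap in the paper's own proof, not merely in yours. The paper's induction step $\sigma_n=\hat{f}(\sigma_{n-2})<\hat{f}(\sigma_{n-1})=\sigma_{n+1}$ requires, at $n=3$, the inequality $\sigma_1<\sigma_2$, i.e. $\zeta<\theta$; this follows neither from its induction hypothesis (vacuous for $n=3$) nor from $\Delta_A<\zeta$. In fact the ``in particular'' claim is false as stated: $\theta\in\langle\theta\rangle_\theta$ always holds, so one may take $\sigma=\zeta=\theta>\Delta_A$, and then $\sigma_1=\sigma_2=\theta$ (and more generally $\sigma_{2n+1}=\sigma_{2n+2}$ for all $n$), so the sequence is increasing but not strictly. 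Your repair---excluding $\zeta=\theta$ on the grounds that in that case the isomorphism $A\cong_{\cal A}A/\sigma$ one is ultimately after is immediate, and then running the strict two-step induction from the two strict base cases---is sound, and it costs nothing downstream, since only item 1 and the non-strict part of item 2 are used later (in Proposition~\ref{INCREASINGSIGMAN2} and Theorem~\ref{CBS2}). Just be explicit that what you actually prove is the proposition under the additional hypothesis $\zeta<\theta$, which is the corrected form of the statement rather than the statement as written.
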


\begin{proof}
1) If $k \geq 2$ then $\sigma_k = u_\theta^{-1}(\theta_{k-1}) = u_\theta^{-1}f_*(\sigma_{k-2}) = \hat{f}(\sigma_{k-2})$. Thus if $k = n+2$ we have that $\hat{f}(\sigma_n) = \sigma_{n+2}$.

2) Suppose that  $\sigma_0 = \Delta_A = \zeta = \sigma_1$. Then it is not very hard to see that $\sigma_n = \theta$ for $n\geq 2$. Thus $(\sigma_n)_{n \in {\mathbb{N}}} $ is an increasing sequence in ${\cal K}(A)$. Let us assume that  $\sigma_0 = \Delta_A < \zeta = \sigma_1$. By induction, let us assume that  $\sigma_{i} < \sigma_{j}$ whenever  $1 < i < j < n$. Since $\hat{f}$ is an order isomorphism and $n\geq 2$, by item 1, we have that $\sigma_n = \hat{f}(\sigma_{n-2}) < \hat{f}(\sigma_{n-1}) = \sigma_{n+1}$. Hence $(\sigma_n)_{n\in {\mathbb{N}}}$ is strictly increasing.

\qed
\end{proof}

\begin{definition}\label{CBSSEQURNCE}
{\rm Let ${\cal A}$ be a category of algebras, ${\cal K}$ be a factor congruences, $A \in {\cal A}$ and $\theta \in {\cal K}(A)$ such that 
there exists an ${\cal A}$-isomorphism  $f:A\rightarrow A/\theta$. Let us consider the sequence $(\sigma_n)_{n\in {\mathbb{N}}}$ in ${\cal K}(A)$ given in {\rm Eq.(\ref{ALIN1})}.  Then a {\it CBS-sequence} is a sequence of the form $(\sigma_{2n}\lor\neg \sigma_{2n+1})_{n\geq 0}$ such that  

\begin{enumerate}

\item
$\neg \sigma_1 =  \neg \zeta = \hat{f}^{-1}( \neg \hat{f}(\zeta))$ where $(\hat{f}(\zeta),\neg \hat{f}(\zeta))$ is a pair of factor congruences in  ${\cal K}(A)$. 

\item
$\neg \sigma_{2n+3} = \hat{f}(\neg \sigma_{2n+1}) $ for $n\geq 1$.

\end{enumerate}
}
\end{definition}

Let us note that $(\zeta, \neg \zeta )$ is a pair of factor congruences because $\hat{f}$ preserves order and permutability in view of Proposition \ref{AUX}-5.

\begin{prop}\label{INCREASINGSIGMAN2}
Let ${\cal A}$ be a category of algebras, ${\cal K}$ be a factor congruences presheaf, $A \in {\cal A}$ and $\theta \in {\cal K}(A)$ such that 
there exists an ${\cal A}$-isomorphism  $f:A\rightarrow A/\theta$. Let us consider the sequence $(\sigma_n)_{n\in {\mathbb{N}}}$ in ${\cal K}(A)$ given in {\rm Eq.(\ref{ALIN1})} and a CBS-sequence $(\sigma_{2n}\lor\neg \sigma_{2n+1})_{n\geq 0}$. Then:

\begin{enumerate}
\item
$\sigma_{2n + 1} \lor \neg \sigma_{2n + 1} = \nabla_A$.

\item
$(\sigma_{2n}\lor\neg \sigma_{2n+1})_{n\geq 0}$ is a dual orthogonal sequence in ${\cal K}(A)$. 

\item
$\hat{f}(\sigma_{2n}\lor\neg \sigma_{2n+1}) = \sigma_{2n+2}\lor\neg \sigma_{2n+3}$ for $n\geq 0$.

\end{enumerate}

\end{prop}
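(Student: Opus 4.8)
The plan is to establish the three items in the order (1), (3), (2), because the dual orthogonality asserted in (2) will rest on both the covering identity (1) and the shift identity (3). Before starting I would record the facts that drive everything. Since $\hat{f}=u_\theta^{-1}f_*$ is an order isomorphism onto ${\cal K}(A)\cap[\theta,\nabla_A]$ (Proposition \ref{AUX}-4 together with the lattice isomorphism $u_\theta$ of Theorem \ref{CORRESP}-2), $\hat{f}$ preserves joins and sends the top to the top, $\hat{f}(\nabla_A)=\nabla_A$. Proposition \ref{INCREASINGSIGMAN}-1 gives $\hat{f}(\sigma_k)=\sigma_{k+2}$, and Definition \ref{CBSSEQURNCE} yields $\neg\sigma_{2n+3}=\hat{f}(\neg\sigma_{2n+1})$ for every $n\geq 0$: the case $n\geq 1$ is item 2 of that definition, while the case $n=0$ is exactly item 1, read as $\hat{f}(\neg\sigma_1)=\neg\hat{f}(\zeta)=\neg\sigma_3$. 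Finally, since $\neg\sigma_1=\neg\zeta\in{\cal K}(A)$ by Definition \ref{FACTORCPRESHEAF}-2, iterating $\hat{f}$ shows $\neg\sigma_{2n+1}\in{\cal K}(A)$ for all $n$, so all terms to which $\hat{f}$ is applied below lie in its domain.

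For item (1) I would induct on $n$. The base case is $\sigma_1\lor\neg\sigma_1=\zeta\lor\neg\zeta=\nabla_A$, which holds because $(\zeta,\neg\zeta)$ is a pair of factor congruences, as noted just after Definition \ref{CBSSEQURNCE}. For the step, using $\sigma_{2n+3}=\hat{f}(\sigma_{2n+1})$, the recursion $\neg\sigma_{2n+3}=\hat{f}(\neg\sigma_{2n+1})$, join preservation, and the inductive hypothesis, I compute $\sigma_{2n+3}\lor\neg\sigma_{2n+3}=\hat{f}(\sigma_{2n+1})\lor\hat{f}(\neg\sigma_{2n+1})=\hat{f}(\sigma_{2n+1}\lor\neg\sigma_{2n+1})=\hat{f}(\nabla_A)=\nabla_A$. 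Item (3) is then an immediate one-line computation: $\hat{f}(\sigma_{2n}\lor\neg\sigma_{2n+1})=\hat{f}(\sigma_{2n})\lor\hat{f}(\neg\sigma_{2n+1})$, where Proposition \ref{INCREASINGSIGMAN}-1 gives $\hat{f}(\sigma_{2n})=\sigma_{2n+2}$ and the recursion gives $\hat{f}(\neg\sigma_{2n+1})=\neg\sigma_{2n+3}$, so the value is $\sigma_{2n+2}\lor\neg\sigma_{2n+3}$.

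For item (2), write $\gamma_k=\sigma_{2k}\lor\neg\sigma_{2k+1}$. Membership in ${\cal K}(A)$ I would get by induction from (3): the base term is $\gamma_0=\sigma_0\lor\neg\sigma_1=\Delta_A\lor\neg\zeta=\neg\zeta\in{\cal K}(A)$, and if $\gamma_n\in{\cal K}(A)$ then $\hat{f}(\gamma_n)=\gamma_{n+1}$ by (3) again lies in ${\cal K}(A)$, since $\hat{f}$ maps ${\cal K}(A)$ into ${\cal K}(A)$. For dual orthogonality, fix $i<j$. Then $\gamma_i\supseteq\neg\sigma_{2i+1}$, and since $(\sigma_n)_n$ is increasing (Proposition \ref{INCREASINGSIGMAN}-2) and $2i+1\leq 2j$, one has $\gamma_j\supseteq\sigma_{2j}\supseteq\sigma_{2i+1}$. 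Hence $\gamma_i\lor\gamma_j\supseteq\neg\sigma_{2i+1}\lor\sigma_{2i+1}$, which is $\nabla_A$ by item (1) applied at index $i$. Thus $\gamma_i\lor\gamma_j=\nabla_A$ for all $i\neq j$, which is exactly the required dual orthogonality.

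The arithmetic here is all short, so the only real obstacle is the bookkeeping with the complements: one must verify that the $n=0$ instance $\neg\sigma_3=\hat{f}(\neg\sigma_1)$ is already forced by item 1 of Definition \ref{CBSSEQURNCE} (so that $\neg\sigma_{2n+3}=\hat{f}(\neg\sigma_{2n+1})$ is genuinely available for every $n\geq 0$, not merely for $n\geq 1$), and that $\hat{f}$ really does preserve the joins in question and fix $\nabla_A$. Once these points are pinned down, the crux of the whole argument reduces to the single order estimate $\gamma_i\lor\gamma_j\supseteq\sigma_{2i+1}\lor\neg\sigma_{2i+1}=\nabla_A$, which exploits that the odd term $\sigma_{2i+1}$ that is ``missing'' from $\gamma_i$ is supplied by the later term $\gamma_j$ through the monotonicity of the sequence.
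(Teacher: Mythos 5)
Your proof is correct and follows essentially the same route as the paper: induction via $\hat{f}$ for item (1), the join-preservation computation for item (3), and the same key order estimate $\gamma_i\lor\gamma_j\supseteq\sigma_{2i+1}\lor\neg\sigma_{2i+1}=\nabla_A$ for the dual orthogonality in item (2). The only minor deviations are cosmetic: you obtain membership of the terms in ${\cal K}(A)$ by iterating $\hat{f}$ through item (3), where the paper instead invokes Definition \ref{FACTORCPRESHEAF}-3 and Proposition \ref{AUX4}, and you make explicit the $n=0$ instance $\neg\sigma_3=\hat{f}(\neg\sigma_1)$ of the complement recursion, which the paper uses implicitly.
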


\begin{proof}
1) By definition of CBS-sequence, $(\sigma_1, \neg \sigma_1)$ is a pair of factor congruences in  ${\cal K}(A)$ and then $\sigma_1 \lor \neg \sigma_1= \nabla_A$. Since $\hat{f}$ is an order isomorphism, if $n>0$ and  $\sigma_{2(n-1) + 1} \lor \neg \sigma_{2(n-1) + 1} = \nabla_A$ then $\nabla_A = \hat{f}(\sigma_{2(n-1) + 1} \lor \neg \sigma_{2(n-1) + 1}) = \hat{f}(\sigma_{2(n-1) + 1}) \lor \hat{f}(\neg \sigma_{2(n-1) + 1}) = \sigma_{2(n-1) + 3} \lor \neg \sigma_{2(n-1) + 3} = \sigma_{2n + 1} \lor \neg \sigma_{2n + 1}$.  

2) By Proposition \ref{INCREASINGSIGMAN}-2, for each natural number $n$ we have that $\sigma_{2n} \leq \sigma_{2n+1}$ and then, $\sigma_{2n+1} \in  {\cal K}(A)\cap [\sigma_{2n}, \nabla_A]$. 
 Thus, by Definition \ref{FACTORCPRESHEAF}-3 and Proposition \ref{AUX4}, $ \sigma_{2n} \lor \neg \sigma_{2n+1} \in {\cal K}(A)$. In this way, $ (\sigma_0\lor\neg \sigma_1, \sigma_2\lor \neg \sigma_3, \ldots )=(\sigma_{2n}\lor\neg \sigma_{2n+1})_{n\geq 0}$ is a sequence in ${\cal K}(A)$.  
Suppose that $m < n$. Since $(\sigma_n)_{n\in {\mathbb{N}}}$ is an increasing sequence $\sigma_{2n} \geq \sigma_{2m + 1}$ then, by item 1, we have that,  
\begin{eqnarray*}
(\sigma_{2m}\lor\neg \sigma_{2m+1}) \lor(\sigma_{2n}\lor\neg \sigma_{2n+1}) & \geq & \sigma_{2m}\lor (\neg \sigma_{2m+1} \lor \sigma_{2m + 1} )\lor\neg \sigma_{2n+1} \\
 & = & \sigma_{2m}\lor \nabla_A \lor \neg \sigma_{2n+1} = \nabla_A.
\end{eqnarray*}
Hence $(\sigma_{2n}\lor\neg \sigma_{2n+1})_{n\geq 0}$ is a dual orthogonal sequence in ${\cal K}(A)$. 

4) Since $\hat{f}$ is a an order isomorphism, by Proposition \ref{INCREASINGSIGMAN}-1,  $\hat{f}(\sigma_{2n}\lor \neg \sigma_{2n+1}) = \hat{f}(\sigma_{2n})\lor \hat{f}(\neg \sigma_{2n+1}) = \sigma_{2n+2}\lor\neg \sigma_{2n+3}$.

\qed
\end{proof}

In what follows, the infimum in ${\cal K}(A)$ of a family $(\sigma_i)_{i\in I}$ of ${\cal K}(A)$, if it exists, will be denoted by  $\bigsqcap^{{\cal K}(A)}_{i\in I}\sigma_i$, to distinguish it from the infimum $\bigcap_{i\in I}\sigma_i$  in $Con(A)$, which need not belong to ${\cal K}(A)$.

\begin{definition}\label{CBSCOMPLETE}
{\rm
Let ${\cal A}$ be a category of algebras and ${\cal K}$ be a factor congruences presheaf. An algebra $A\in {\cal A}$ is called {\it  $CBS_{\cal K}$-complete} iff for all
${\cal A}$-isomorphism $f:A \rightarrow A/\theta$, where $\theta \in {\cal K}(A)$, and for all $\sigma \in {\cal K}(A)$ such that $\sigma \subseteq \theta$ there exists $\zeta \in \langle \sigma \rangle_{\theta}$ and a CBS-sequence $(\sigma_{2n}\lor\neg \sigma_{2n+1})_{n\geq 0}$ satisfying,
\begin{enumerate}
\item
$\sigma_{\zeta} = \bigsqcap^{{\cal K}(A)}_{n\geq 1}(\sigma_{2n}\lor\neg \sigma_{2n+1})$ exists. 

\item
There exists $\neg \sigma_{\zeta} \in {\cal K}(A)$ such that $(\sigma_{\zeta}, \neg \sigma_{\zeta})$  and $(\neg\zeta \cap \sigma_{\zeta}, \zeta \lor \neg \sigma_{\zeta})$ are two pairs of factor congruences in  ${\cal K}(A)$.

\end{enumerate}
}
\end{definition}

\begin{theo} \label{CBS2}
Let ${\cal A}$ be a category of algebras, ${\cal K}$ be a factor congruences presheaf  and $A \in {\cal A}$. Then the following conditions are equivalent: 
\begin{enumerate}
\item
$A$ is $CBS_{\cal K}$-complete. 

\item
$A$ has the $CBS_{\cal K}$-property.

\end{enumerate}
\end{theo}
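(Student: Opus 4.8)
The plan is to prove the equivalence by establishing both directions using Theorem \ref{CBS1}, which already reduces the $CBS_{\cal K}$-property to the following internal condition: whenever $\theta \in {\cal K}(A)$ with $A \cong_{\cal A} A/\theta$ and $\sigma \in {\cal K}(A)$ with $\sigma \subseteq \theta$, then $A \cong_{\cal A} A/\sigma$. So the real content is to show that $CBS_{\cal K}$-completeness is exactly what is needed to produce this last isomorphism, and conversely that the isomorphism forces the completeness data to exist.

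For the direction $1 \Longrightarrow 2$ (completeness implies the property), I would fix an ${\cal A}$-isomorphism $f:A \to A/\theta$ and $\sigma \in {\cal K}(A)$ with $\sigma \subseteq \theta$, and take the $\zeta \in \langle\sigma\rangle_\theta$ and the CBS-sequence $(\sigma_{2n}\lor\neg\sigma_{2n+1})_{n\geq 0}$ guaranteed by completeness. The strategy is the classical Cantor--Bernstein back-and-forth argument transported into ${\cal K}(A)$ via the order isomorphism $\hat f = u_\theta^{-1}f_*$ of Eq.(\ref{fWIDE}). The key technical facts are already in hand: by Proposition \ref{INCREASINGSIGMAN2} the CBS-sequence is dual orthogonal and $\hat f$ shifts its terms by one index, i.e. $\hat f(\sigma_{2n}\lor\neg\sigma_{2n+1}) = \sigma_{2n+2}\lor\neg\sigma_{2n+3}$. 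I would then show that the infimum $\sigma_\zeta = \bigsqcap^{{\cal K}(A)}_{n\geq 1}(\sigma_{2n}\lor\neg\sigma_{2n+1})$ is a fixed point of $\hat f$, so that $A/\sigma_\zeta \cong_{\cal A} A/\hat f(\sigma_\zeta) \cong_{\cal A} A/\sigma_\zeta$, and, using the two factor-congruence decompositions $(\sigma_\zeta,\neg\sigma_\zeta)$ and $(\neg\zeta\cap\sigma_\zeta,\zeta\lor\neg\sigma_\zeta)$ from condition 2 of Definition \ref{CBSCOMPLETE}, decompose the quotients so that the ``telescoping'' of isomorphic pieces yields $A/\sigma \cong_{\cal A} A/\zeta \cong_{\cal A} A/\Delta_A \cong_{\cal A} A$. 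The role of the two paired factor congruences is precisely to split off the $\zeta$-part against the $\neg\sigma_\zeta$-part in the manner of the set-theoretic bijection built from the even and odd orbits.

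For the direction $2 \Longrightarrow 1$ (the property implies completeness), I would start from an arbitrary ${\cal A}$-isomorphism $f:A \to A/\theta$ with $\theta \in {\cal K}(A)$ and $\sigma \subseteq \theta$ in ${\cal K}(A)$. Since $A$ has the $CBS_{\cal K}$-property, Theorem \ref{CBS1} gives $A \cong_{\cal A} A/\sigma$, so one may simply take $\zeta = \sigma$ (noting $\sigma \in [\Delta_A,\theta]\cap{\cal K}(A)$ and $A/\sigma \cong_{\cal A} A/\sigma$, whence $\zeta \in \langle\sigma\rangle_\theta$). The remaining work is to exhibit the CBS-sequence and verify that the infimum in condition 1 and the paired factor congruences in condition 2 actually exist; here I would construct the complements $\neg\sigma_{2n+1}$ recursively via Definition \ref{CBSSEQURNCE}, using that ${\cal K}$ is a factor congruences presheaf (so complements exist within ${\cal K}(A)$, by Definition \ref{FACTORCPRESHEAF}-2) and that $\hat f$ preserves both order and permutability by Proposition \ref{AUX}-5. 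The infimum $\sigma_\zeta$ and its complement must then be produced from the fixed-point structure created by the isomorphism $A \cong_{\cal A} A/\sigma$, which closes the loop.

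The main obstacle I anticipate is the second clause of condition 2 in Definition \ref{CBSCOMPLETE}: guaranteeing that $\sigma_\zeta$ and the combined element $\neg\zeta\cap\sigma_\zeta$ admit \emph{factor} complements lying in ${\cal K}(A)$, rather than mere lattice complements. Infima in ${\cal K}(A)$ need not be well-behaved with respect to the factor-congruence structure, and permutability of the relevant congruences must be checked carefully; this is where Proposition \ref{AUX}-5, Proposition \ref{AUX4}, and the presheaf axioms of Definition \ref{FACTORCPRESHEAF} will have to be combined delicately. In the forward direction this data is assumed, but in the reverse direction it must be manufactured from the single isomorphism $A \cong_{\cal A} A/\sigma$, and ensuring it genuinely produces a dual-orthogonal CBS-sequence whose ${\cal K}(A)$-infimum exists and carries the required paired factor congruences is the delicate heart of the proof.
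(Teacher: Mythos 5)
Your proposal follows the paper's architecture (the Theorem \ref{CBS1} reduction, the sequence of Eq.(\ref{ALIN1}), the two decompositions from Definition \ref{CBSCOMPLETE}), but both directions contain genuine gaps. In the direction $1\Rightarrow 2$, your pivotal step --- that $\sigma_\zeta$ ``is a fixed point of $\hat f$, so that $A/\sigma_\zeta\cong_{\cal A} A/\hat{f}(\sigma_\zeta)\cong_{\cal A} A/\sigma_\zeta$'' --- is both false in general and vacuous even if granted. Since $\hat{f}$ shifts the CBS-sequence by one index, $\hat{f}(\sigma_\zeta)=\bigsqcap^{{\cal K}(A)}_{n\geq 2}(\sigma_{2n}\lor\neg \sigma_{2n+1})$, an infimum over \emph{fewer} terms, which contains $\sigma_\zeta$ but need not equal it; and the stated conclusion $A/\sigma_\zeta\cong_{\cal A} A/\sigma_\zeta$ is a tautology that cannot drive any telescoping. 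The identity that actually powers the proof concerns $\chi=\neg\zeta\cap\sigma_\zeta$, not $\sigma_\zeta$: by Definition \ref{CBSSEQURNCE}-1 one has $\hat{f}(\neg\zeta)=\hat{f}(\Delta_A)\lor\neg\hat{f}(\zeta)=\theta\lor\neg\sigma_3=\sigma_2\lor\neg\sigma_3$, whence $\hat{f}(\chi)=\hat{f}(\sigma_\zeta)\cap\hat{f}(\neg\zeta)=\bigsqcap^{{\cal K}(A)}_{n\geq 1}(\sigma_{2n+2}\lor\neg\sigma_{2n+3})\cap(\sigma_2\lor\neg\sigma_3)=\bigsqcap^{{\cal K}(A)}_{n\geq 1}(\sigma_{2n}\lor\neg\sigma_{2n+1})=\sigma_\zeta$; intersecting the shifted infimum with the missing first term restores $\sigma_\zeta$ exactly. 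Since $A/\rho\cong_{\cal A} A/\hat{f}(\rho)$ holds for \emph{every} $\rho\in{\cal K}(A)$ (Proposition \ref{AUX}-4 together with the presheaf property; no fixed-point hypothesis is involved), this yields the non-trivial isomorphism $A/\chi\cong_{\cal A} A/\sigma_\zeta$, and then the two assumed decompositions finish the argument: $A\cong_{\cal A} A/\neg\chi\times A/\chi\cong_{\cal A} A/(\zeta\lor\neg\sigma_\zeta)\times A/\sigma_\zeta\cong_{\cal A} A/\zeta\cong_{\cal A} A/\sigma$. Without the identity $\hat{f}(\chi)=\sigma_\zeta$ your argument has no mechanism for trading a factor of $A$ against a factor of $A/\zeta$.

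In the direction $2\Rightarrow 1$ the gap is your choice $\zeta=\sigma$. With that witness the sequence $(\sigma_n)$ is genuinely non-trivial, and --- as you yourself concede --- nothing in the hypotheses lets you ``manufacture'' the infimum $\sigma_\zeta$ or the two pairs of factor congruences of Definition \ref{CBSCOMPLETE}-2; your plan stops exactly where the proof would have to begin. The paper avoids this entirely by a different choice of witness: since the $CBS_{\cal K}$-property gives (via Theorem \ref{CBS1}) $A\cong_{\cal A} A/\sigma$, we have $A/\Delta_A\cong_{\cal A} A/\sigma$, hence $\Delta_A\in\langle\sigma\rangle_\theta$, and one takes $\zeta=\Delta_A$. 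Then $\sigma_0=\sigma_1=\Delta_A$ forces, by an easy induction, $\sigma_{2n}=\sigma_{2n+1}$ for all $n\geq 1$, so every term of the CBS-sequence equals $\sigma_{2n+1}\lor\neg\sigma_{2n+1}=\nabla_A$; the infimum is $\nabla_A$ (trivially existing), and taking $\neg\sigma_\zeta=\Delta_A$ both required pairs collapse to the trivial pair $(\nabla_A,\Delta_A)$. The point you missed is that $CBS_{\cal K}$-completeness only demands the data for \emph{some} $\zeta\in\langle\sigma\rangle_\theta$, and the set $\langle\sigma\rangle_\theta$ was defined precisely so that the isomorphism $A\cong_{\cal A} A/\sigma$ supplied by the $CBS_{\cal K}$-property makes the degenerate witness $\zeta=\Delta_A$ available, rendering all existence requirements trivial.
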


\begin{proof}
$1 \Longrightarrow 2)$ Let us assume that $A$ is $CBS_{\cal K}$-complete. Let $\sigma, \theta \in {\cal K}(A)$ such that $\sigma \subseteq \theta$ and  $f:A \rightarrow A/\theta$ be a ${\cal A}$-isomorphism. By Theorem \ref{CBS1} we shall prove that $A\cong_{\cal A} A/\sigma $. Let us suppose that $(\sigma_{2n}\lor\neg \sigma_{2n+1})_{n\geq 0}$ is a CBS-sequence satisfying the condition of Definition \ref{CBSCOMPLETE}.

By hypothesis $\sigma_{\zeta} = \bigsqcap^{{\cal K}(A)}_{n\geq 1}(\sigma_{2n}\lor\neg \sigma_{2n+1}) \in  {\cal K}(A)\cap [\zeta, \nabla_A]$ and there exists $\neg \sigma_{\zeta} \in {\cal K}(A)$ such that $(\sigma_{\zeta}, \neg \sigma_{\zeta})$  and $(\neg\zeta \cap \sigma_{\zeta}, \zeta \lor \neg \sigma_{\zeta})$ are two pairs of factor congruences in  ${\cal K}(A)$. If we define $\chi = \neg \zeta \cap \sigma_{\zeta}$ and $\neg \chi = \neg \sigma_{\zeta} \lor \zeta $ then 
\begin{equation}\label{1}
 A\cong A/ \neg \chi \times A /\chi.  
\end{equation}
Since $\sigma_{\zeta} \in  {\cal K}(A)\cap [\zeta, \nabla_A]$, by Proposition \ref{AUX4} and by hypothesis, we have that
\begin{eqnarray}\label{2}
A/\zeta & \cong_{\cal A} & A/(\neg \sigma_{\zeta} \lor \zeta) \times A/\sigma_{\zeta} \nonumber \\
& = & A/\neg \chi \times A/\sigma_{\zeta}. 
\end{eqnarray}
Since $f_*(\chi) \in {\cal K}(A/\theta)$, by Theorem \ref{PRESHEAF}-3 there exists $\rho \in {\cal K}(A) \cap [\theta, \nabla_A]$ such that $f_*(\chi) = \rho/\theta$. Therefore  
$\hat{f}(\chi) = u_\theta^{-1} f_*(\chi) = u_\theta^{-1}(\rho/\theta) = \rho$ and, by Proposition \ref{AUX}-4, we have that
\begin{equation}\label{3}
A/\chi  \cong_{\cal A}  (A/\theta)/f_*(\chi) = (A/\theta)/(\rho/\theta ) \cong_{\cal A} A/\rho = A/\hat{f}(\chi).
\end{equation}

Since $\hat{f}: {\cal K}(A) \rightarrow {\cal K}(A) \cap[\theta, \nabla_A]$ is a $\langle \nabla, \Delta, \subseteq \rangle$-isomorphism, by Definition \ref{CBSSEQURNCE}, we have that
\begin{eqnarray}\label{4}
\hat{f}(\chi) &=& \hat{f}(\sigma_{\zeta} \cap \neg \zeta) \nonumber \\
& =& \hat{f}(\mbox{$\bigsqcap^{{\cal K}(A)}_{n\geq 1}$}(\sigma_{2n}\lor\neg \sigma_{2n+1})) \cap \hat{f}(\neg \zeta) \nonumber \\
& =& \hat{f}(\mbox{$\bigsqcap^{{\cal K}(A)}_{n\geq 1}$}(\sigma_{2n}\lor\neg \sigma_{2n+1})) \cap \hat{f}(\hat{f}^{-1}(\neg \hat{f}(\zeta)) \nonumber \\
& =& \hat{f}(\mbox{$\bigsqcap^{{\cal K}(A)}_{n\geq 1}$}(\sigma_{2n}\lor\neg \sigma_{2n+1})) \cap \hat{f}(\Delta_A \lor \hat{f}^{-1}(\neg \hat{f}(\zeta)) \nonumber \\
& =& \hat{f}(\mbox{$\bigsqcap^{{\cal K}(A)}_{n\geq 1}$}(\sigma_{2n}\lor\neg \sigma_{2n+1})) \cap ( \hat{f}(\Delta_A) \lor \hat{f}(\hat{f}^{-1}(\neg \hat{f}(\zeta)) ) \nonumber \\
& = & \mbox{$\bigsqcap^{{\cal K}(A)}_{n\geq 1}$}(\sigma_{2n+2}\lor\neg \sigma_{2n+3}) \cap (\theta \lor \neg \hat{f}(\zeta)) \nonumber \\
& = &  \mbox{$\bigsqcap^{{\cal K}(A)}_{n\geq 1}$}(\sigma_{2n+2}\lor\neg \sigma_{2n+3}) \cap (\sigma_2 \lor \neg \sigma_3) \nonumber \\
& = & \mbox{$\bigsqcap^{{\cal K}(A)}_{n\geq 1}$}(\sigma_{2n} \lor \sigma_{2n+1}) \nonumber \\ 
& = & \sigma_\zeta
\end{eqnarray}

From Eq.(\ref{3}) and Eq.(\ref{4}), $A/\chi \cong_{\cal A} A/\sigma_{\zeta}$. Then, by Eq.(\ref{2}), $A/\zeta \cong_{\cal A} A/\neg \chi \times A/\chi $ and, by equation Eq.(\ref{1}), $A \cong_{\cal A} A/\zeta \cong_{\cal A}  A/\sigma$ since $\zeta \in \langle \sigma \rangle_\theta$. Hence $A$ has the $CBS_{\cal K}$-property.

$2 \Longrightarrow 1)$ Let us assume that $A$ has $CBS_{\cal K}$-property. Let $f:A \rightarrow A/\theta$ be an ${\cal A}$-isomorphism, where $\theta \in {\cal K}(A)$, and $\sigma \in [\Delta_A, \theta ]\cap {\cal K}(A)$. Then, by hypothesis, $A/\Delta_A \cong_{\cal A} A \cong_{\cal A} A/\sigma$ and $\Delta_A \in \langle \sigma \rangle_\theta$ (see Eq.(\ref{SIGMAIS})). Thus, we consider the sequence $(\sigma_n)_{n\in {\mathbb{N}}}$ given by    
\begin{align*}
\sigma_0 &= \Delta_A \,,&   \nonumber \\     
\sigma_1 &= \Delta_A \,,&                 \theta_1 &= f_*(\sigma_0) =  \theta/\theta \,,\ \nonumber\\    
\sigma_2 &= u_\theta^{-1}(\theta_1) = \theta \,,& \theta_2 &= f_*(\sigma_1) = f_*(\Delta_A)= \theta/\theta\,,\ \nonumber \\
\sigma_3 &= u_\theta^{-1}(\theta_2) = \theta \,,&          \theta_3 &= f_*(\sigma_2) \,,\ \nonumber \\
         &\vdots                             \,&                   &\vdots               \,\ \nonumber \\
\sigma_{n+1} &= u_\theta^{-1}(\theta_n) \,,&      \theta_{n+1} &= f_*(\sigma_n) \,.\
\end{align*}
By induction, we show that $\sigma_{2n}$ = $\sigma_{2n+1}$ for all $n\geq 1$. Indeed $\sigma_2 = \sigma_3 = \theta/\theta$. Let us suppose that $\sigma_{2k} = \sigma_{2k+1}$. Then,
\begin{eqnarray*}
\sigma_{2(k+1)} &=& u_\theta^{-1}(\theta_{2k+1}) = u_\theta^{-1} f_*(\sigma_{2k}) \\
& = & u_\theta^{-1} f_*(\sigma_{2k + 1}) =  u_\theta^{-1}(\theta_{2(k+1)})\\
& = & \sigma_{2(k+1) + 1}.
\end{eqnarray*}
In this way, $(\sigma_{2n} \lor \neg\sigma_{2n+1})_{n\geq 1} = (\nabla_A, \nabla_A, \nabla_A, \ldots )$ implying that $\sigma_{\Delta_A} = \bigsqcap^{{\cal K}(A)}_{n\geq 1}(\sigma_{2n}\lor\neg \sigma_{2n+1}) = \nabla_A$. Hence $A$ is $CBS_{\cal K}$-complete. 

\qed        
\end{proof}

In rest of the section we study an special framework for the CBS-theorem based on congruences presheaves defined by sets of factor congruences with a Boolean structure. For this we first introduce the following definition.

\begin{definition}\label{BOOLFACC}
{\rm Let ${\cal A}$ be a category of algebras. A {\it Boolean factor congruences presheaf} is a congruences presheaf ${\cal K}$ such that, for each  $A\in {\cal A}$,

\begin{enumerate}

\item
${\cal K}(A) \subseteq FC(A)$.

\item
$\langle {\cal K}(A), \cap, \lor, \neg, \Delta_A, \nabla_A  \rangle$ is a Boolean sublattice of $Con(A)$ where $\neg$ is the factor complement.

\end{enumerate}
}
\end{definition}

By Proposition \ref{CENTER}-2 and Definition \ref{FACTORCPRESHEAF}-3 we can see that for each $\sigma \in {\cal K}(A)$, the Boolean structure of ${\cal K}(A/\sigma)$ is given by 
\begin{equation}\label{AUX5}
\langle {\cal K}(A/\sigma), \lor, \cap, \neg, \Delta_{A/\sigma}, \nabla_{A/\sigma} \rangle \hspace{0.2cm} where \hspace{0.2cm} \neg(\theta/\sigma) = (\neg_\sigma \theta)/\sigma  
\end{equation}

The following proposition allows us to provide examples of Boolean factor congruences presheaf from the center of congruence lattices of the algebras of a category of algebras.

\begin{prop} \label{PRESHEAF1}
Let ${\cal A}$ be a category of algebras such that for each $A \in {\cal A}$ and $\sigma \in Z(Con(A))$, $A/\sigma \in {\cal A}$. Then, the class operator 
${\cal A}\ni A \mapsto Z(Con(A))$ is a congruences operator over ${\cal A}$ and the following assertions are equivalent:

\begin{enumerate}
\item $Z(Con(-))$ is a Boolean factor congruences presheaf.

\item
For each $A \in {\cal A}$, and $\theta \in Z(Con(A))$, $\theta \circ \neg \theta = \nabla_A$ where $\neg \theta$ is the Boolean complement of $\theta$ in $Z(Con(A))$.

\end{enumerate}

\end{prop}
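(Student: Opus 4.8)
The plan is to dispose of the structural requirements in order and then read off the equivalence almost for free. First I would check that $Z(Con(-))$ is a congruences operator in the sense of Definition \ref{congoperator}. Condition~1 holds because $\Delta_A$ is the bottom of $Con(A)$, and the bottom of any bounded lattice is neutral and complemented (with complement $\nabla_A$), so $\Delta_A \in Z(Con(A))$. Condition~2 is precisely the standing hypothesis that $A/\sigma \in {\cal A}$ for $\sigma \in Z(Con(A))$. For condition~3, given an ${\cal A}$-isomorphism $f:A \to B$, Proposition \ref{AUX}-4 yields that $f^*:Con(B) \to Con(A)$ is an order isomorphism; since the center of a bounded lattice is defined purely lattice-theoretically (neutrality together with complementation), any order isomorphism, being a lattice isomorphism, carries center onto center, so $f^*\upharpoonright_{Z(Con(B))}$ is an order isomorphism onto $Z(Con(A))$.

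The key observation is that $Z(Con(-))$ is \emph{always} a congruences presheaf, independently of condition~(2). I would verify item~3 of Proposition \ref{PRESHEAF}: for $A \in {\cal A}$ and $\sigma \in Z(Con(A))$, that $\theta \in Z(Con(A)) \cap [\sigma, \nabla_A]$ iff $\theta/\sigma \in Z(Con(A/\sigma))$. By Proposition \ref{CENTER}-1 applied to $L = Con(A)$ and $z = \sigma$ we have $Z(Con(A)) \cap [\sigma, \nabla_A] = Z([\sigma, \nabla_A])$, while Theorem \ref{CORRESP}-2 gives that $u_\sigma:[\sigma,\nabla_A] \to Con(A/\sigma)$, $\theta \mapsto \theta/\sigma$, is a lattice isomorphism. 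As lattice isomorphisms preserve centers, $u_\sigma(Z([\sigma,\nabla_A])) = Z(Con(A/\sigma))$, which is exactly the required equivalence. Hence $Z(Con(-))$ is a congruences presheaf in all cases, so that being a \emph{Boolean} factor congruences presheaf reduces to the two extra clauses of Definition \ref{BOOLFACC}.

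With this reduction the equivalence is short. The decisive fact is that for $\theta \in Z(Con(A))$ with Boolean complement $\neg\theta$ one automatically has $\theta \cap \neg\theta = \Delta_A$ and $\theta \lor \neg\theta = \nabla_A$, so by the characterization of factor congruences via \cite[Theorem 5.9]{Bur}, $\theta$ is a factor congruence with factor complement $\neg\theta$ precisely when $\theta \circ \neg\theta = \nabla_A$; moreover, uniqueness of complements of neutral elements forces any factor complement of a central $\theta$ to coincide with its Boolean complement. For $(1)\Rightarrow(2)$ I would note that if $Z(Con(-))$ is a Boolean factor congruences presheaf then $Z(Con(A)) \subseteq FC(A)$ by Definition \ref{BOOLFACC}-1, so every central $\theta$ is a factor congruence and hence $\theta \circ \neg\theta = \nabla_A$. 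For $(2)\Rightarrow(1)$ I would use the same fact in reverse: condition~(2) makes each central $\theta$ a factor congruence whose factor complement is its Boolean complement, giving $Z(Con(A)) \subseteq FC(A)$; since $Z(Con(A))$ is a Boolean sublattice of $Con(A)$ by \cite[Theorem 4.15]{MM} with complement the factor complement, both clauses of Definition \ref{BOOLFACC} hold, and combined with the presheaf property already established, $Z(Con(-))$ is a Boolean factor congruences presheaf.

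The main obstacle I anticipate is not the equivalence but the presheaf step: one must be comfortable transporting the center/Boolean structure across the correspondence isomorphism $u_\sigma$, i.e.\ checking that a lattice isomorphism preserves neutral and complemented elements and that Proposition \ref{CENTER}-1 correctly identifies $Z(Con(A)) \cap [\sigma,\nabla_A]$ with the center of the interval $[\sigma,\nabla_A]$. Everything else amounts to unwinding the definitions of factor congruence and of Boolean factor congruences presheaf.
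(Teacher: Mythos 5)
Your proof is correct and follows essentially the same route as the paper: the congruences-operator part via Proposition \ref{AUX}, the presheaf condition via Proposition \ref{CENTER}-1 combined with the lattice isomorphism $u_\sigma$ of Theorem \ref{CORRESP}-2 and Proposition \ref{PRESHEAF}-3, and the equivalence by identifying the Boolean complement of a central congruence with its factor complement through uniqueness of complements. The only difference is organizational: you isolate up front the fact that $Z(Con(-))$ is a congruences presheaf independently of condition (2), which the paper establishes by the same computation but leaves embedded inside its proof of $2 \Longrightarrow 1$.
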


\begin{proof} 
By Proposition \ref{AUX} it is immediate that $Z(Con(-))$ is a congruences operator over ${\cal A}$.

$1\Longrightarrow 2$) Let us assume that $Z(Con(-))$ is a Boolean factor congruences presheaf. Then, for each $A \in {\cal A}$, $Z(Con(A)) \subseteq FC(A)$. Since $Z(Con(A))$ is a Boolean algebra, the complement of an element in $Z(Con(A))$ is unique. Consequently, by condition 2 in Definition \ref{FACTORCPRESHEAF}, for each $\theta \in Z(Con(A))$ we have that $\theta \circ \neg \theta = \nabla_A$.    

$2\Longrightarrow 1$) Let us assume that for each $\theta \in Z(Con(A))$, $\theta \circ \neg \theta = \nabla_A$. Then $Z(Con(A)) \subseteq FC(A)$ for each $A \in {\cal A}$. Let $\sigma \in Z(Con(A))$. By Proposition \ref{CENTER} and Proposition \ref{CORRESP}-2 we have that $\theta \in [\sigma, \nabla_A]\cap Z(Con(A))$ iff $\theta \in Z[\sigma, \nabla_A]$ iff $\theta/\sigma$ in $Z(Con(A/\sigma))$. Thus, by Proposition \ref{PRESHEAF}-3, $Z(Con(-))$ is a congruences presheaf over $ {\cal A}$.  Hence our claim.

\qed
\end{proof}

\begin{example}\label{CONGPER}
{\rm   Let ${\cal A}$ be a congruence permutable variety. Let us notice that for each $A \in {\cal A}$ and $\theta \in Z(Con(A))$, $\theta \cap \neg \theta = \Delta_A$ and $\theta \circ \neg \theta = \theta \lor \neg \theta = \nabla_A$ because the permutability of $\theta$. Then $Z(Con(A)) \subseteq FC(A)$ and, by Proposition \ref{PRESHEAF1}, $Z(Con(-))$ is a Boolean factor congruences presheaf.
}
\end{example}

\begin{example}\label{ARITMET}
{\rm  Let ${\cal A}$ be an arithmetical variety i.e., ${\cal A}$ is a congruence distributive and congruence permutable variety. By Example \ref{CONGPER}, for each $A \in {\cal A}$, $Z(Con(A)) \subseteq FC(A)$ and  $Z(Con(-))$ is Boolean factor congruences presheaf. Since $A$ congruence distributive, $FC(A)$ is a Boolean sublattice of $Con(A)$ and then, $FC(A) \subseteq Z(Con(A))$. Thus $Z(Con(A)) = FC(A)$. In this way $FC(-) = Z(Con(-))$ is Boolean factor congruences presheaf.  Another interesting categories of algebras in which $FC(-) = Z(Con(-))$ is a Boolean factor congruences presheaf are discriminator varieties since they are arithmetical varieties.    
}
\end{example}

\begin{theo}\label{CBS3}
Let ${\cal A}$ be a category of algebras and ${\cal K}$ be a Boolean factor congruences presheaf. Then the following conditions are equivalent:

\begin{enumerate}
\item
$A$ has the $CBS_{{\cal K}}$-property.

\item
For each ${\cal A}$-isomorphism $f:A \rightarrow A/\theta$, where $\theta \in {\cal K}(A)$, and for each $\sigma \in [\Delta_A, \theta ]\cap {\cal K}(A)$ there exists $\zeta \in \langle \sigma \rangle_{\theta}$ and a CBS-sequence $(\sigma_{2n}\lor\neg \sigma_{2n+1})_{n\geq 0}$ {\rm (see  Definition \ref{CBSSEQURNCE})} such that $\sigma_{\zeta} = \bigsqcap^{{\cal K}(A)}_{n\geq 1}(\sigma_{2n}\lor\neg \sigma_{2n+1})$ exists. 

\end{enumerate}
\end{theo}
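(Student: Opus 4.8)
The plan is to exploit Theorem \ref{CBS2}, which already establishes that, for a \emph{factor} congruences presheaf, $CBS_{\cal K}$-completeness is equivalent to the $CBS_{\cal K}$-property. Since a Boolean factor congruences presheaf is in particular a factor congruences presheaf, both conditions of the present theorem live inside that equivalence. Condition 2 here is precisely condition 1 of Definition \ref{CBSCOMPLETE} ($CBS_{\cal K}$-completeness), \emph{minus} condition 2 of that definition, which demands the existence of a factor complement $\neg\sigma_\zeta$ making $(\sigma_\zeta,\neg\sigma_\zeta)$ and $(\neg\zeta\cap\sigma_\zeta,\zeta\lor\neg\sigma_\zeta)$ pairs of factor congruences. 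So the whole content of the theorem is that, \emph{under the Boolean hypothesis}, the second completeness condition comes for free as soon as the infimum $\sigma_\zeta$ exists.

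\textbf{Key steps.} First I would note that $1\Longrightarrow 2$ is immediate: by Theorem \ref{CBS2}, the $CBS_{\cal K}$-property implies $CBS_{\cal K}$-completeness, and completeness requires in particular that $\sigma_\zeta=\bigsqcap^{{\cal K}(A)}_{n\geq 1}(\sigma_{2n}\lor\neg\sigma_{2n+1})$ exist for a suitable $\zeta$ and CBS-sequence, which is exactly condition 2. The substantive direction is $2\Longrightarrow 1$. Here I would argue that condition 2 already delivers full $CBS_{\cal K}$-completeness, whence the $CBS_{\cal K}$-property follows from Theorem \ref{CBS2}. To upgrade condition 2 to completeness, I must produce the factor complement $\neg\sigma_\zeta$ of Definition \ref{CBSCOMPLETE}-2. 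This is where the Boolean hypothesis is used: by Definition \ref{BOOLFACC}, $\langle{\cal K}(A),\cap,\lor,\neg,\Delta_A,\nabla_A\rangle$ is a Boolean sublattice of $Con(A)$, so every element of ${\cal K}(A)$ — in particular $\sigma_\zeta$, which belongs to ${\cal K}(A)$ by hypothesis — automatically has a Boolean complement $\neg\sigma_\zeta\in{\cal K}(A)$ forming a pair of factor congruences. It then remains to check that $(\neg\zeta\cap\sigma_\zeta,\ \zeta\lor\neg\sigma_\zeta)$ is also a pair of factor congruences; but in a Boolean algebra $\neg\zeta\cap\sigma_\zeta$ and $\zeta\lor\neg\sigma_\zeta=\neg(\neg\zeta\cap\sigma_\zeta)$ are Boolean complements of one another, so this pair is again a pair of factor congruences in ${\cal K}(A)$. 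Hence both clauses of Definition \ref{CBSCOMPLETE}-2 hold.

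\textbf{Main obstacle.} The delicate point is verifying that the Boolean complement supplied by the lattice structure genuinely yields a \emph{pair of factor congruences} in the sense of Definition \ref{FACTORCPRESHEAF}, i.e.\ that $\sigma_\zeta\cap\neg\sigma_\zeta=\Delta_A$, $\sigma_\zeta\lor\neg\sigma_\zeta=\nabla_A$, \emph{and} that the two permute. The first two identities are immediate from the Boolean complement laws, and permutability is guaranteed because Definition \ref{BOOLFACC}-2 asks ${\cal K}(A)$ to sit inside $FC(A)$ with $\neg$ being the factor complement; so every complemented pair in ${\cal K}(A)$ is a pair of factor congruences by construction. The analogous check for $(\neg\zeta\cap\sigma_\zeta,\zeta\lor\neg\sigma_\zeta)$ reduces, via the De Morgan law $\zeta\lor\neg\sigma_\zeta=\neg(\neg\zeta\cap\sigma_\zeta)$ valid in the Boolean sublattice, to the same observation. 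Once this identification is in place, Definition \ref{CBSCOMPLETE} is fully satisfied and Theorem \ref{CBS2} closes the argument.
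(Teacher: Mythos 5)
Your proof is correct and takes essentially the same approach as the paper: both arguments reduce the theorem to Theorem \ref{CBS2} by observing that, since ${\cal K}(A)$ is a Boolean sublattice of $Con(A)$ in which $\neg$ is the factor complement, the pairs $(\sigma_\zeta,\neg\sigma_\zeta)$ and $(\neg\zeta\cap\sigma_\zeta,\,\zeta\lor\neg\sigma_\zeta)=(\neg\zeta\cap\sigma_\zeta,\,\neg(\neg\zeta\cap\sigma_\zeta))$ demanded by Definition \ref{CBSCOMPLETE}-2 exist automatically, so condition 2 is exactly $CBS_{\cal K}$-completeness. Your write-up is merely more explicit (and more careful about permutability and the De Morgan step) than the paper's two-line proof, but the underlying argument is identical.
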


\begin{proof}
Since for each $A \in {\cal A}$, ${\cal K}(A)$ is a Boolean sublattice of $Con(A)$, for all $\zeta, \sigma \in {\cal K}(A)$ we have that $(\neg \zeta \cap \sigma, \neg (\neg \zeta \cap \sigma )) = (\neg \zeta \cap \sigma, \zeta \lor \neg \sigma)$ is a pair of factor congruences in ${\cal K}(A)$. Hence, by Theorem \ref{CBS2}, our claim.

\qed
\end{proof}

By Theorem \ref{CBS3} and Proposition \ref{INCREASINGSIGMAN2}-2 the next result is immediate.

\begin{prop}\label{CBS33}
Let ${\cal A}$ be a category of algebras, ${\cal K}$ be a Boolean factor congruences presheaf and $A\in {\cal A}$ such that ${\cal K}(A)$ is dual orthogonal $\sigma$-complete. Then $A$ has the $CBS_{{\cal K}}$-property. \qed
\end{prop}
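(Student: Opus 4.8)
\textbf{Proof plan for Proposition \ref{CBS33}.}

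The plan is to verify the two conditions of Theorem \ref{CBS2} indirectly by checking the hypotheses of Theorem \ref{CBS3}, whose condition 2 becomes nearly automatic once we know that ${\cal K}(A)$ is dual orthogonal $\sigma$-complete. Concretely, I would fix an ${\cal A}$-isomorphism $f:A \rightarrow A/\theta$ with $\theta \in {\cal K}(A)$ and an arbitrary $\sigma \in [\Delta_A, \theta]\cap {\cal K}(A)$, and then produce the required $\zeta \in \langle \sigma \rangle_\theta$ together with a CBS-sequence whose infimum in ${\cal K}(A)$ exists. The natural choice is simply $\zeta = \sigma$ itself, since $\sigma \in [\Delta_A,\theta]\cap{\cal K}(A)$ and $A/\sigma \cong_{\cal A} A/\sigma$ trivially gives $\sigma \in \langle \sigma \rangle_\theta$ by Eq.(\ref{SIGMAIS}).

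Having chosen $\zeta$, I would form the canonical sequence $(\sigma_n)_{n\in{\mathbb N}}$ of Eq.(\ref{ALIN1}) and fix a complement $\neg\zeta$ of $\zeta$ in the Boolean algebra ${\cal K}(A)$, which exists because ${\cal K}$ is a Boolean factor congruences presheaf; this determines a CBS-sequence $(\sigma_{2n}\lor\neg\sigma_{2n+1})_{n\geq 0}$ in the sense of Definition \ref{CBSSEQURNCE}, using that $\hat{f}$ preserves order, complements and permutability (Proposition \ref{AUX}-5). The decisive step is then Proposition \ref{INCREASINGSIGMAN2}-2, which tells us that $(\sigma_{2n}\lor\neg\sigma_{2n+1})_{n\geq 0}$ is a dual orthogonal sequence in ${\cal K}(A)$. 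By the assumed dual orthogonal $\sigma$-completeness of ${\cal K}(A)$, the infimum $\bigsqcap^{{\cal K}(A)}_{n\geq 1}(\sigma_{2n}\lor\neg\sigma_{2n+1})$ exists in ${\cal K}(A)$, so $\sigma_\zeta$ is well defined. This establishes exactly condition 2 of Theorem \ref{CBS3}, from which condition 1 (the $CBS_{\cal K}$-property of $A$) follows by that theorem.

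The only subtlety I anticipate is bookkeeping rather than mathematical depth: one must confirm that the dual orthogonal sequence indexed as in Proposition \ref{INCREASINGSIGMAN2}-2 is denumerable (it is, being indexed by ${\mathbb N}$) and that the relevant infimum is the one over $n\geq 1$ that appears in the definition of $\sigma_\zeta$, so that ``dual orthogonal $\sigma$-complete'' applies verbatim. Since the completeness hypothesis supplies the infimum directly, no construction of the factor complement $\neg\sigma_\zeta$ or of the additional pair $(\neg\zeta\cap\sigma_\zeta,\zeta\lor\neg\sigma_\zeta)$ is needed here — those are absorbed into Theorem \ref{CBS3}, whose Boolean hypothesis guarantees them automatically. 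Thus the whole argument reduces to pointing out that Proposition \ref{INCREASINGSIGMAN2}-2 produces a dual orthogonal sequence and invoking the completeness assumption, making the proof essentially immediate as the statement claims.
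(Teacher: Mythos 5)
Your proposal is correct and takes essentially the same route as the paper: the paper's entire proof is the remark that the result is immediate from Theorem \ref{CBS3} together with Proposition \ref{INCREASINGSIGMAN2}-2, which is exactly what you spell out (take $\zeta=\sigma$, let the Boolean structure of ${\cal K}(A)$ supply the complements for the CBS-sequence, note the sequence is dual orthogonal, and let dual orthogonal $\sigma$-completeness supply $\sigma_\zeta$). Your closing observation that the extra pairs of factor congruences in Definition \ref{CBSCOMPLETE} are automatic in the Boolean case is likewise precisely the content of Theorem \ref{CBS3}, so nothing is missing.
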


\section{BFC and the $CBS$-property} \label{BFCFC}
Categories of algebras having BFC are examples of categories  in which the class operator $FC(-)$ defines a Boolean factor congruences presheaf. Thus, BFC allows us to establish an interesting  framework for several versions of the CBS-Theorem. Indeed, most of the versions present in literature can be formulated in terms of the $FC(-)$ presheaf.  In this section we deal with this argument and we establish new examples of algebras having the $CBS_{FC}$-property.

\begin{definition}
{\rm 
An algebra $A$ has {\it Boolean factor congruences} (BFC for short) iff $FC(A)$ forms a Boolean sublattice of $Con(A)$.  We say that a {\it category of algebras has BFC} iff each algebra of the category has BFC. 
}
\end{definition}

\begin{prop}\label{PRESHEAF2}
Let ${\cal A}$ be a category of algebras having BFC such that for each $A\in {\cal A}$ and $\sigma \in FC(A)$, $A/\sigma \in {\cal A}$. Then $FC(-)$ is a Boolean factor congruences presheaf. 

\end{prop}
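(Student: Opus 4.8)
The plan is to verify the two defining conditions of a Boolean factor congruences presheaf (Definition \ref{BOOLFACC}) for the class operator $FC(-)$, namely that $FC(-)$ is a congruences presheaf and that each $FC(A)$ is a Boolean sublattice of $Con(A)$ with the factor complement as Boolean complement. The second condition is essentially given: it is exactly the hypothesis that ${\cal A}$ has BFC. So the real content is showing that $FC(-)$ is a congruences presheaf, and the key observation is that a category having BFC is congruence modular (or at least that $FC(-)$ behaves as if it were), which would let me invoke Proposition \ref{MODPERM} directly.

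First I would recall that $FC(-)$ is a congruences operator under the stated hypothesis $A/\sigma \in {\cal A}$ for $\sigma \in FC(A)$, which the text already records (using Proposition \ref{AUX}-5 to handle condition 3 of Definition \ref{congoperator}, since factor congruences are permutable). The remaining task is to verify one of the equivalent conditions of Proposition \ref{PRESHEAF}, most naturally condition 3: for $A \in {\cal A}$ and $\sigma \in FC(A)$, we have $\theta \in FC(A) \cap [\sigma, \nabla_A]$ iff $\theta/\sigma \in FC(A/\sigma)$. The direction ``$\theta/\sigma \in FC(A/\sigma) \Rightarrow \theta \in FC(A)$'' is already supplied by Proposition \ref{BFCQUOT} and needs no BFC hypothesis. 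So everything reduces to the forward direction.

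The crux, then, is to show that if $\theta \in FC(A) \cap [\sigma, \nabla_A]$ then $\theta/\sigma \in FC(A/\sigma)$, and this is exactly the step carried out in the modular case inside the proof of Proposition \ref{MODPERM}. The plan is to replicate that argument, replacing the appeal to modularity by the Boolean structure of $FC(A)$. Concretely, with $(\theta, \neg\theta)$ a pair of factor congruences and $\sigma \subseteq \theta$, I want $\theta \cap (\sigma \lor \neg\theta) = \sigma$. In the modular proof this followed from the modular law; here I would instead use that $FC(A)$ is a \emph{distributive} (Boolean) sublattice of $Con(A)$ containing $\sigma, \theta, \neg\theta$, so that $\theta \cap (\sigma \lor \neg\theta) = (\theta \cap \sigma) \lor (\theta \cap \neg\theta) = \sigma \lor \Delta_A = \sigma$, using $\sigma \subseteq \theta$ and $\theta \cap \neg\theta = \Delta_A$. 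Then Theorem \ref{CORRESP}-2 gives $\theta/\sigma \cap (\neg\theta \lor \sigma)/\sigma = \Delta_{A/\sigma}$, and since $\nabla_A = \theta \circ \neg\theta \subseteq \theta \circ (\neg\theta \lor \sigma)$, Theorem \ref{CORRESP}-3 yields $\theta/\sigma \circ (\neg\theta \lor \sigma)/\sigma = \nabla_{A/\sigma}$; hence $(\theta/\sigma, (\neg\theta\lor\sigma)/\sigma)$ is a pair of factor congruences and $\theta/\sigma \in FC(A/\sigma)$.

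Having verified condition 3 of Proposition \ref{PRESHEAF}, I conclude $FC(-)$ is a congruences presheaf, and combining this with the BFC hypothesis (which gives condition 2 of Definition \ref{BOOLFACC}) and the fact that $\Delta_A, \nabla_A \in FC(A)$ with factor complements inside $FC(A)$, I conclude $FC(-)$ is a Boolean factor congruences presheaf. The main obstacle I anticipate is purely the identity $\theta \cap (\sigma \lor \neg\theta) = \sigma$: in Proposition \ref{MODPERM} this is where congruence modularity is used, and I must make sure the Boolean (hence distributive) structure of $FC(A)$ genuinely applies, i.e.\ that $\sigma \lor \neg\theta$ computed in $FC(A)$ agrees with the join in $Con(A)$ so that the subsequent quotient computations via Theorem \ref{CORRESP} are valid. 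Since $FC(A)$ is assumed to be a \emph{sublattice} of $Con(A)$, joins and meets in $FC(A)$ coincide with those in $Con(A)$, so this causes no difficulty, and the proof goes through essentially verbatim from the modular case.
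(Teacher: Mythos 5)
Your proposal is correct and takes essentially the same route as the paper: the paper also verifies condition 3 of Proposition \ref{PRESHEAF} by showing $(\theta/\sigma,(\neg\theta\lor\sigma)/\sigma)$ is a pair of factor congruences (via Theorem \ref{CORRESP}-2 and \ref{CORRESP}-3) and handles the converse direction with Proposition \ref{BFCQUOT}. The only difference is that the paper simply asserts $\theta/\sigma\cap(\neg\theta\lor\sigma)/\sigma=\Delta_{A/\sigma}$ without comment, whereas you supply the justification explicitly -- the distributivity of the Boolean sublattice $FC(A)$ giving $\theta\cap(\sigma\lor\neg\theta)=\sigma$ in place of the modularity used in Proposition \ref{MODPERM} -- which is precisely the implicit step.
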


\begin{proof}
Let $A \in {\cal A} $ and $\sigma \in FC(A)$. Let us suppose that $\theta \in FC(A) \cap [\sigma, \nabla_A]$. We want to prove that $\theta /\sigma \in FC(A/\sigma)$. We first note that  $\theta/\sigma \cap (\neg \theta \lor \sigma)/\sigma = \Delta_{A/\sigma}$. Moreover, $\nabla_A = \theta \circ \neg \theta \subseteq  \theta \circ (\neg \theta \lor \sigma)$ and, by Theorem \ref{CORRESP}-3, $\theta/\sigma \circ (\neg \theta \lor \sigma)/\sigma = \nabla_{A/\sigma}$. Thus, $(\theta/\sigma, (\neg \theta \lor \sigma)/\sigma )$ is a pair of factor congruences of $A/\sigma$ and $\theta/\sigma \in FC(A/\sigma)$. 
Now if we suppose  that $\theta/\sigma \in FC(A/\sigma)$ then, by Proposition \ref{BFCQUOT}, $\theta \in FC(A)$. Hence, by Proposition \ref{PRESHEAF}, $FC(-)$ is a Boolean factor congruences presheaf. 

\qed
\end{proof}

The next proposition provides a general method to obtain algebras satisfying the $CBS_{FC}$-property in categories of algebras having BFC.

\begin{prop}\label{DIRCETPRODUCT}
${\cal A}$ be a category of algebras closed by direct products having BFC and $(A_i)_{i\in I}$ be a family of directly indecomposable algebras in ${\cal A}$ then:
$$B = \prod_{i\in I} A_i \hspace{0.2cm} \mbox{satisfies the $CBS_{FC}$-property.} $$ In particular if  $I = {\mathbb N}$ and $A_i = A$ for each $i\in {\mathbb N}$ then $B$ satisfies the $CBS_{FC}$-property in a non trivial way {\rm (see Remark \ref{NONTRIVIALCBS})}.
\end{prop}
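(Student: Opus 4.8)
The plan is to reduce the assertion to a completeness property of $FC(B)$ and then to quote Proposition \ref{CBS33}. Since ${\cal A}$ has BFC, $FC(-)$ is a Boolean factor congruences presheaf (Proposition \ref{PRESHEAF2}); note that all quotients of $B$ that occur below are products of subfamilies of $(A_i)_{i\in I}$, hence remain objects of ${\cal A}$, so the machinery of Section \ref{FCPRES} applies to $B$. By Proposition \ref{CBS33} it therefore suffices to prove that $FC(B)$ is dual orthogonal $\sigma$-complete.

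To control $FC(B)$ I would, for each $J \subseteq I$, consider the kernel $\eta_J = Ker(\pi_J)$ of the canonical projection $\pi_J : B \rightarrow \prod_{i\in J} A_i$. A direct computation shows that $\eta_J \in FC(B)$, with factor complement $\eta_{I \setminus J}$ and $B/\eta_J \cong_{\cal A} \prod_{i\in J} A_i$, and that the assignment $J \mapsto \eta_J$ is an order-reversing Boolean embedding of $\mathcal{P}(I)$ into $FC(B)$ sending $\emptyset, I, \cup, \cap$ and set-complement to $\nabla_B, \Delta_B, \cap, \lor$ and $\neg$, respectively. Its image is thus a Boolean subalgebra of $FC(B)$ isomorphic to $\mathcal{P}(I)$, whose coatoms are exactly the $\eta_{\{i\}}$, with indecomposable quotients $B/\eta_{\{i\}} \cong_{\cal A} A_i$.

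The heart of the proof, and the step I expect to be the main obstacle, is to show that this embedding is \emph{onto}: every factor congruence of $B$ has the form $\eta_J$. Both hypotheses are essential here. Direct indecomposability gives $FC(A_i) = \{\Delta_{A_i}, \nabla_{A_i}\}$, so that an arbitrary $\phi \in FC(B)$ can induce only a trivial factor congruence on each quotient $B/\eta_{\{i\}} \cong_{\cal A} A_i$; this pins down, coordinate by coordinate, an all-or-nothing behaviour of $\phi$ and thereby a candidate subset $J = J(\phi) \subseteq I$. BFC is then used to check that these coordinatewise choices cohere into a single factor congruence with $\phi = \eta_J$. The delicate point is the infinite case, where one must exclude ``exotic'' direct decompositions of $B$ not aligned with the coordinates, and this is precisely where the argument needs the most care.

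Granting the identification $FC(B) \cong \mathcal{P}(I)$, the rest is immediate. A denumerable dual orthogonal family $(\theta_n)_n$ in $FC(B)$ is a family $(\eta_{J_n})_n$ with the $J_n$ pairwise disjoint, and its infimum is the $Con(B)$-intersection $\bigcap_n \eta_{J_n} = \eta_{\bigcup_n J_n}$, which again lies in $FC(B)$. Hence $FC(B)$ is dual orthogonal $\sigma$-complete and, by Proposition \ref{CBS33}, $B$ has the $CBS_{FC}$-property. Finally, for $I = {\mathbb N}$ and $A_i = A$ one has $B = \prod_{\mathbb N} A$, and Proposition \ref{NDIRECTPRODUCT} supplies a proper $\sigma \in FC(B)$ with $B \cong_{\cal A} B/\sigma$; combined with the $CBS_{FC}$-property just established and Remark \ref{NONTRIVIALCBS}, this shows that $B$ satisfies the $CBS_{FC}$-property in a non trivial way.
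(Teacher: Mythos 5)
Your overall strategy coincides with the paper's: identify $FC(B)$ with the power-set Boolean algebra $\mathcal{P}(I) \cong \mathbf{2}^I$, observe that this algebra is (more than) dual orthogonal $\sigma$-complete, apply Proposition \ref{CBS33}, and obtain the non-trivial part from Proposition \ref{NDIRECTPRODUCT} together with Remark \ref{NONTRIVIALCBS}. Those outer steps are correct as you state them: the map $J \mapsto \eta_J = Ker(\pi_J)$ is indeed an order-reversing Boolean embedding, the infimum computation $\bigcap_n \eta_{J_n} = \eta_{\bigcup_n J_n}$ is right, and your remark that the relevant quotients $B/\eta_J \cong_{\cal A} \prod_{i\in J} A_i$ stay in ${\cal A}$ (so that the presheaf machinery applies) is a legitimate point of care.

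However, there is a genuine gap at exactly the place you flag as ``the main obstacle'': you never prove that the embedding $J \mapsto \eta_J$ is onto $FC(B)$, i.e.\ that every factor congruence of an infinite product of directly indecomposable algebras is the kernel of a projection onto a subproduct. Your sketch --- indecomposability ``pins down, coordinate by coordinate, an all-or-nothing behaviour'' of an arbitrary $\phi \in FC(B)$, and BFC then makes ``these coordinatewise choices cohere'' --- is a description of what must be shown, not an argument; excluding exotic direct decompositions of an infinite product is precisely the content of the strict refinement property, and the passage from BFC to strict refinement, and thence to $FC\bigl(\prod_{i\in I} A_i\bigr) \cong \prod_{i\in I} FC(A_i) = \mathbf{2}^I$ when each $A_i$ is directly indecomposable, is a substantive theorem. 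The paper does not prove it from scratch either: it closes exactly this step by citing \cite[Theorem 2 and Theorem 11]{ISK}. So your proposal is, in effect, the paper's proof with its mathematical core left as an acknowledged black box; to complete it you must either invoke Iskander's results (or an equivalent formulation of the strict refinement property for algebras with BFC) or reconstruct their proof.
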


\begin{proof}
Note that for each $i\in I$, $FC(A) = \{\Delta_{A_i}, \nabla_{A_i} \}$. Then, by {\rm  \cite[Theorem 2 and Theorem 11]{ISK}}, we can see that $FC(B)$ is lattice isomorphic to $\prod_{i\in I}FC(A_i) = {\bf 2}^I$ where ${\bf 2}$ is the Boolean algebra of two elements. Since ${\bf 2}^I$ is a complete Boolean algebra, by Proposition \ref{CBS33}, $B$ satisfies the $CBS_{FC}$-property. The second part follows by Proposition \ref{NDIRECTPRODUCT}.

\qed
\end{proof}

The rest of the section is devoted to reformulate, in terms of the $CBS_{FC}$-property, several versions of the CBS-theorem  already present in the literature as well as new version of the theorem in categories of algebras having BFC.

\begin{example}
{\rm [Lattice ordered groups] A {\it lattice ordered group} ({\it l-group} for short) is an algebra $\langle A, +, \lor, \land, -, 0 \rangle$ of type $\langle 2,2,2,1,0 \rangle$ such that 
\begin{enumerate}
\item
$\langle A, +,  -, 0 \rangle$ is a group,  \hspace{0.5cm} 3. $x+(s\land t) + y = (x+s+y) \land (x+t+y)$,

\item
$\langle A, \lor, \land \rangle$ is a lattice, \hspace{0.8cm} 4. $x+(s\lor t) + y = (x+s+y) \lor (x+t+y)$.
\end{enumerate}

Thus, l-groups define a variety of algebras denoted by ${\cal LG}$. Let $A \in {\cal LG}$.  If $x\in A$ then we define $\vert x \vert = x \lor -x$. The positive cone of $A$ is given by $A^+ = \{x\in A: x\geq 0 \}$. A set $G \subseteq A$ is said to be orthogonal iff $G \subseteq A^+$ and $x\land y = 0$ for any pair of distinct elements $x,y \in G$. The l-group $A$ is said to be {\it orthogonally $\sigma$-complete} iff each denumerable orthogonal subset of $A$ has supremum in $A$. It is well known that $Con(A)$ is lattice isomorphic to the lattice $I_l(A)$ of all convex normal subgroups (also called l-ideals) of $A$.  Moreover $FC(A)$ is a Boolean sublattice of $Con(A)$ (see \cite[\S XIII-9]{Bir}) identified to a Boolean sublattice of $I_l(A)$, denoted by  $FCI_l(A)$, whose elements are called {\it direct factors} of $A$. Thus, ${\cal LG}$ have BFC and,
by Proposition \ref{PRESHEAF2}, $FC(-)$ is a Boolean factor congruences presheaf. If $I \in FCI_l(A)$ then the set $\neg I$ defined as  $\neg I = \{a\in A: \vert a \vert \land \vert x \vert = 0 \hspace{0.2cm} \mbox{for each $x\in I$} \} $ is the complement of $I$ in $FCI_l(A)$ (see \cite[Eq.(1.3)]{J2}). In order to establish a CBS-theorem for l-groups we need to prove the following result:   
\begin{enumerate}
\item[]
Let $A$ be an orthogonal $\sigma$-complete l-group and $(I_n)_{n \in {\mathbb N}}$ be a dual orthogonal sequence in the Boolean lattice $FCI_l(A)$. Then $\bigcap_{n \in {\mathbb N}} I_n \in FCI_l(A)$.    
\end{enumerate}
Indeed, if $(I_n)_{n \in {\mathbb N}}$ is a dual orthogonal sequence in $FCI_l(A)$ then $(\neg I_n)_{n \in {\mathbb N}}$ is an orthogonal sequence in $FCI_l(A)$ because $FCI_l(A)$ is a Boolean algebra.
By \cite[Lemma 1.5]{J2} $\neg \bigcup _{n \in {\mathbb N}}\neg I_n  \in FCI_l(A)$ and in \cite[Theorem 2.2.5]{SST} it is proved that $\neg \bigcup _{n \in {\mathbb N}}\neg I_n = \bigcap_{n \in {\mathbb N}} \neg \neg I_n = \bigcap_{n \in {\mathbb N}} I_n $. Hence our claim.

Thus, if $A$ is an orthogonal $\sigma$-complete l-group then, by Proposition \ref{CBS33}, $A$ has the $CBS_{FC}$-property. In this example we have reformulated, in terms Boolean factor congruences presheaves, the version of CBS-theorem for l-groups given in \cite{J2}.  

}
\end{example}

\begin{example}
{\rm [$\cal L$-varieties]
$\cal L$-varieties where introduced in \cite{FR} as a general lattice ordered structure in which several versions of CBS-theorem can be formulated. A variety ${\cal A}$ of algebras is a {\it ${\cal L}$-variety} iff 
\begin{enumerate}
\item[(1)]
there are terms of the language of ${\cal A}$ defining on each $A\in {\cal A}$ operations $\lor$, $\land$, 0,1 such that $L(A)=\langle A,\lor,\land,0,1\rangle$ is a bounded lattice;   
\item[(2)]
for all $A\in {\cal A}$ and for all $z\in Z(L(A))$, the binary relation ${\Theta}_z$ on $A$ defined by $a  \Theta_z b$  iff $a\land z = b\land z$  is a congruence on $A$,   such that $A\cong A/{\Theta}_z\times A/{\Theta}_{\neg z}$.   
\end{enumerate}
Example of $\cal L$-varieties are the following (see \cite[\S 2]{FR})
\begin{itemize}
\item
The variety ${\cal L}_{01}$ of bounded lattices and its subvarieties. In particular, distributive lattices and  modular lattices. 

\item
The variety ${\cal LI}_{01}$ of bounded lattices with involution ``$\ninv$" \cite{Ka} satisfying the {\em Kleene equation} $x \land \ninv x = (x \land \ninv x) \land (y \lor \ninv y)$.
Subvarieties of  ${\cal LI}_{01}$ are the variety $\cal OL$ of {\em ortholattices\/} \cite{Bir,MM}, characterized by the equation $x \land \ninv x = 0$, and the variety $\cal K$ of {\em Kleene algebras\/} \cite{BD}, characterized by the distributive law. The intersection $\cal OL \cap K $ is the variety $\cal  B$  of {\it Boolean algebras}. An important subvariety of $\cal OL$ is the variety $\cal OML$ of {\em orthomodular lattices\/} \cite{Bir,MM}.

\item
The variety ${\cal B_\omega}$ of {\it pseudocomplemented distributive lattices} \cite{BD} and the subvariety of Stone algebras ${\cal ST}$ defined as ${\cal ST} = {\cal B_\omega} + \{(x\land y)^* = x^* \lor y^* \}$ where $^*$ is the pseudocomplement (see \cite[\S VIII]{BD}).

\item
The variety ${\cal RL}$ of {\it residuated lattices} \cite{JT} also called {\it
commutative integral residuated $0,1$-lattices} \cite{KO} defined by algebras $ \langle A, \lor, \land, \odot, \rightarrow, 0, 1 \rangle$ of type $ \langle 2, 2, 2, 2, 0, 0 \rangle$ satisfying the following: 
\begin{enumerate}
\item
$\langle A,\odot,1 \rangle$ is an abelian monoid,

\item
$L(A) = \langle A, \lor, \land, 0,1 \rangle$ is a bounded lattice,  

\item
$(x \odot y)\rightarrow z = x\rightarrow (y\rightarrow z)$,

\item
$((x\rightarrow y)\odot x)\land y = (x\rightarrow y)\odot x$,

\item
$(x\land y)\rightarrow y = 1$.

\end{enumerate}
Very important subvarieties of ${\cal RL}$ are: the variety of {\it Heyting algebras} \cite{BD} given by ${\cal H} = {\cal RL} + \{x\odot y = x\land y\}$ and the variety of {\it BL-algebras}, characterized by ${\cal BL} = {\cal RL} +  \{x\land y = x \odot (x\rightarrow y), \hspace{0.2cm} (x\rightarrow y)\lor (y\rightarrow x)= 1\}$. BL-algebras are the algebraic counterpart of the fuzzy logic related to continuous $t$-norm \cite{HAJ}. Important subvarieties of ${\cal BL}$ are: the variety of {\it MV-algebras}, representing the algebraic counterpart of the  infinite-valued \L ukasiewicz logic \cite{CDM, HAJ}, given by ${\cal MV} = {\cal BL} + \{\neg \neg x = x\}$, the variety of {\it linear Heyting algebras}, also known as {\it G\"odel algebras}, given by ${\cal HL} = {\cal H} + \{(x \rightarrow y) \lor (y \rightarrow x) = 1\}$  and the variety of {\it Product logic algebras}, given by ${\cal PL} = {\cal BL} + \{\neg \neg z \odot ((x \odot z)\rightarrow (y \odot z))) \rightarrow (x \rightarrow y) =1, \hspace{0.2cm} x \wedge \neg x = 0\}$.

\item
The varieties of {\it \L ukasiewicz} and of {\it Post algebras} of order $n \geq 2$ {\rm \cite{BD}, as well as the various types of {\it \L ukasiewicz - Moisil} algebras which are considered in \cite{Boi}.}

\item
${\cal PMV}$, the variety of {\it pseudo MV-algebras} \cite{DVU1, GI1}. 
\end{itemize}

Let ${\cal A}$ be a ${\cal L}$-variety. In {\rm \cite[Proposition 1.4]{FR}} it is proved that ${\cal A}$ has BFC where for each $A \in {\cal A}$, $FC(A)$ is Boolean isomorphic to $Z(L(A))$. Then, by  Proposition \ref{PRESHEAF2}, $FC(-)$ is a Boolean factor congruences presheaf. Thus, if $L(A)$ is an orthogonally $\sigma$-complete lattice then, by Proposition \ref{CBS33}, $A$ has the $CBS_{FC}$-property.  Let us notice that the $\sigma$-completeness (orthogonally $\sigma$-completeness) of an algebra $A \in {\cal A}$ does not imply the $\sigma$-completeness (orthogonally $\sigma$-completeness) of  $Z(L(A))$ (see \cite[Example 4.1]{FR}). However, there are  $\cal L$-varieties  where the $\sigma$-completeness (orthogonally $\sigma$-completeness) condition on the algebras  guarantee the corresponding $\sigma$-completeness (orthogonally $\sigma$-completeness) of their centers and then, the $CBS_{FC}$-property. Examples of these particular ${\cal L}$-varieties are: Boolean algebras (where the $CBS_{FC}$-property was obtained by Sikorski and Tarski), Orthomodular lattices (where the $CBS_{FC}$-property was obtained in \cite{DMP}), MV-algebras (where the $CBS_{FC}$-property was obtained in \cite{DMN}), Pseudo MV-algebra (where the $CBS_{FC}$-property was obtained in \cite{J}), Stone algebras {\rm \cite[Proposition 4.3]{FR}}, BL-algebras {\rm \cite[Corollary 4.8]{FR}}, \L ukasiewicz and Post  algebras of order $n$ {\rm \cite[Lemma 3.1]{Cig}}.    
}
\end{example}

\begin{example}
{\rm [Semigroups with $0,1$ and bounded semilattices] A {\it semigroup with $0,1$} is an algebra $\langle A, \cdot, 0,1 \rangle$ of type $\langle 2,0,0 \rangle$  such that the operation $\cdot$ is associative, $0\cdot x = x\cdot 0 =0$ and $1\cdot x = x\cdot 1 =x$. Thus, semigroups with $0,1$ define a variety denoted by ${\cal SG}_{0,1}$. An important subvariety of ${\cal SG}_{0,1}$ is the variety of  {\it bounded semilattices} defined as ${\cal SL}_{0,1} = {\cal SG}_{0,1} + \{x^2 = x, \hspace{0.2cm} x\cdot y = y \cdot x \}$. 
Let ${\cal A}$ be a subvariety of ${\cal SG}_{0,1}$ and $A \in {\cal A}$.  An element $z\in A$ is called {\it central} iff there exist $A_1, A_2 \in {\cal A}$ and a ${\cal SG}_{0,1}$-isomorphism $f: A \rightarrow A_1 \times A_2$ such that $f(z) = (1,0)$. The set of all central elements of $A$ is denoted by $Z(A)$. In \cite{SM0, SM} it is proved that $Z(A)$ is a Boolean algebra in which the meet operation is the semigroup operation. Furthermore, it has been proved that $Z(A)$ is Boolean isomorphic to $FC(A)$. Thus, by Proposition \ref{PRESHEAF2}, $FC(-)$ is a Boolean factor congruences presheaf. Hence, if $A \in {\cal A}$ is an algebra such that $Z(A)$ is orthogonally $\sigma$-complete then, by Proposition \ref{CBS33}, $A$ has the $CBS_{FC}$-property. 
}
\end{example}

\begin{example}
{\rm [Commutative pseudo $BCK$-algebras] A {\it commutative pseudo $BCK$-algebras} ($^{cp}BCK$-algebra for short) \cite{GI1} is an algebra $\langle A, \rightarrow, \rightsquigarrow, 1 \rangle$ of type $\langle 2,2,0 \rangle$ satisfying the following equations: 

\begin{enumerate}
\item
$x\rightarrow (y \rightsquigarrow z) = y\rightarrow (x \rightsquigarrow z)$, \hspace{0.4cm} 4. $(x\rightarrow y) \rightsquigarrow y = (y\rightarrow x) \rightsquigarrow x$,

\item
$x\rightarrow x = x \rightsquigarrow x = 1$, \hspace{1.72cm} 5. $(x \rightsquigarrow y) \rightarrow y = (y \rightsquigarrow x)\rightarrow  x$.

\item
$1\rightarrow x = 1 \rightsquigarrow x = x$,

\end{enumerate}

Thus $^{cp}BCK$-algebras define a variety denoted by $^{cp}{\cal BCK}$. Let $A$ be a $^{cp}BCK$-algebra. The relation $x\leq y$ iff $x \rightarrow y = 1$ iff $x \rightsquigarrow y = 1$ defines a  join semi-lattice order where $x\lor y = (x\rightarrow y) \rightsquigarrow y = (x \rightsquigarrow y) \rightarrow y$. Let us notice that in \cite{KU1} a dually equivalent definition for $^{cp}BCK$-algebras, based on the reverse order, is introduced. In \cite[Corollary 4.4]{EK} it is proved that $^{cp}{\cal BCK}$ is a congruence distributive variety. Then, for each $A \in ^{cp}{\cal BCK}$, $FC(A)$ is a Boolean sub lattice of $Con(A)$. Thus $^{cp}{\cal BCK}$ has BFC and, by Proposition \ref{PRESHEAF2}, $FC(-)$ is a Boolean factor congruences presheaf. By \cite[Lemma 4.1]{KU1} we can dually prove that if $A$ is a dual orthogonal $\sigma$-complete $^{cp}BCK$-algebra then, each dual orthogonal sequences $(\theta_n)_{n\in {\mathbb N}}$ in $FC(A)$ admits infimum $\bigcap_{n\in {\mathbb N}} \theta_n \in FC(A)$. Hence, if $A$ is a dual orthogonal $\sigma$-complete $^{cp}BCK$-algebra then, by Proposition \ref{CBS33}, $A$ has the $CBS_{FC}$-property.
In this example we have reformulated, in terms Boolean factor congruences presheaves, the version of CBS-theorem for $^{cp}BCK$-algebras given in \cite{KU1}.  
}
\end{example}

\begin{example}
{\rm [Church algebras] An algebra $A$ is called {\it Church algebra} \cite{MSA} if there are two constants $0, 1 \in A$ and a ternary term $t(z,x,y)$ called {\it if-then-else term}  in the language of $A$ such that $t(1,x,y) = x$ and  $t(0,x,y) = y$. A variety of algebras ${\cal A}$ is a called a {\it Church variety} iff every algebra in ${\cal A}$ is a Church algebra with respect to the same term $t(z, x, y)$ and constants $0, 1$. Let ${\cal A}$ be a Church variety and $A\in {\cal A}$. An element $e\in A$ is called ${\it central}$ iff the generated congruences $\theta(1,e)$ and $\theta(e,0)$ defines a pair of factor congruences of $A$. We denote by $Z(A)$ the set of al central elements of $A$. It is proved that central elements are equationally characterized in the following way: $e\in A$ is a central element iff whenever   
$a,b \in A$, $\varphi$ is an operation symbol of arity $n$ in the language of ${\cal A}$ and  $\overline{a},  \overline{b} \in A^n$, the following equation are satisfied 
\begin{align*}
t(e,x,x) &= x \,,  & t(e,t(e,x,y),z) &= t(e,x,z) = t(e,x,t(e,y,z)) \,,\   \nonumber \\     
t(e,1,0) &= e  \,,&                 t(e, \varphi^A(\overline{a}),\varphi^A(\overline{b}) ) &= \varphi^A(t(e,a_1, b_1)\ldots t(e,a_n, b_n)  ) . \,\ \nonumber
\end{align*}
Moreover $\langle Z(A), \lor, \land, \neg, 0,1 \rangle$ where $x\lor y = t(x,1,y)$, $x\land y = t(x,y,0)$ and $\neg x = t(x,0,1)$ is a Boolean algebra isomorphic to $FC(A)$. Thus, ${\cal A}$ has BFC and, by Proposition \ref{PRESHEAF2}, $FC(-)$ is a Boolean factor congruences presheaf. In what follows we shall study concrete examples of Church algebras satisfying the $CBS_{FC}$-property.

\begin{itemize}
\item
{\it Rings with identity} define a Church variety denoted by ${\cal R}_1$ where the if-then-else term is given by $t(z,x,y) = (y+z-zy)\cdot(1-z+zx)$. If $A\in {\cal R}_1$ then $Z(A)$ is the set of  central idempotent elements of $A$. Two interesting examples of rings with identity whose its central idempotent elements define a complete Boolean algebra are

\begin{itemize}
\item[-]
{\it Division rings} because they are simple algebras. Then, by Proposition \ref{DIRCETPRODUCT}, denumerable direct products of division rings satisfy the  $CBS_{FC}$-property in a non trivial way. 

\item[-]  
{\it Baer rings} i.e., a ring with identity $A$ such that for every subset $S \subseteq A$ the right annihilator $Ann_r(S) = \{r\in A: \forall s\in S, r\cdot s = 0 \}$ is the principal right ideal generated by an idempotent. In \cite[\S 3, 3.3 ]{Ber1} it is proved that $Z(A)$ is a complete Boolean algebra. Then, by Proposition \ref{CBS33}, Baer rings have the $CBS_{FC}$-property.

\end{itemize} 

\item
{\it $*$-Rings}. They are rings with identity having an involution operation $x\mapsto x^*$ such that $x^{**} = x$, $(x+y^*) = x^*+y^*$ and $(x\cdot y)^* = y^* \cdot x^*$. By the underling ring with unity structure, $*$-rings define a Church variety denoted by ${\cal R}_1^*$. Examples of $*$-rings having the  $CBS_{FC}$-property are the Baer $*$-rings. Indeed: A {\it Baer $*$-ring} is a $*$-rings $A$ such that for every subset $S \subseteq A$, the right annihilator $Ann_r(S) = eA$ where $e$ is a projection (i.e. $e^2 = e^* = e$). By \cite[P18, 4A]{Ber2} we can see that $Z(A)$ is determined by the central projections. Moreover in a Baer $*$-rings its central projections form a complete Boolean algebra \cite[p.30, Corollary ]{KAP}. Thus, by Proposition \ref{CBS3}, Baer $*$-rings have the $CBS_{FC}$-property.

\end{itemize} 
}
\end{example}

\begin{example}
{\rm [Effect and Pseudo-effect algebras] 
Although there are versions of the CBS-theorem related to these structures \cite{JEN, DVU4}, from a strictly formal point of view, these versions can not be framed in our formalism because these algebras are defined by a binary partial operation. However, we can easily extend the notion of Boolean factor congruences presheaf and the CBS-property to these particular algebraic structures.
A {\it pseudo-effect algebra} is a partial algebra $\langle E, +, 0,1 \rangle$ of type $\langle 2,0,0 \rangle$ such that

\begin{enumerate}
\item
$a+b$ and $(a+b)+c$ exist iff $b+c$ and $a+(b+c)$ exist and in this case $(a+b)+c = a+(b+c)$,

\item
for each $a\in E$ there is exactly one $a^- \in E$ and exactly one $a^\sim \in E$ such that $a^- + a = a + a^\sim = 1$,

\item
if $a+b$ exists, there are elements $d,e \in E$ such that $a+b = d+a = b+e$,  

\item
if $1+a$ or $a+1$ exists then $a=0$.

\end{enumerate}

We denote by ${\cal PE}$ the category whose objects are pseudo-effect algebras and whose arrows, called {\it ${\cal PE}$-homomorphisms}, are functions $f:E \rightarrow F$ between pseudo-effect algebras such that $f(0) = 0$, $f(1) = 1$ and $f(a+b) = f(a) + f(b)$ whenever $a+b$ exists in $E$. If $+$ is commutative then $E$ is said to be an {\it effect algebra} and we denote by ${\cal E}$ the sub category of effect algebras. Let $E\in {\cal PE}$. If we define $a\leq b$ iff there exists $x\in E$ such that $a+x = b$ then $\langle E, \leq \rangle$ is a partial order such that $0\leq a \leq 1$ for any $a\in E$. For a given $e\in E$ the interval $[0,e]$ endowed with $+$ restricted to $[0,e]^2$ is a pseudo effect algebra $[0,e] = \langle [0,e], +, 0,e \rangle$. An element $e\in E$ is said to be $\it central$ iff  there exists a ${\cal PE}$-isomorphism $f_e: E \rightarrow [0,e] \times [0,e^\sim]$ such that $f_e (e) = (e,0)$ and, if $f_e(x) = (x_1, x_2)$ then $x = x_1 + x_2 = x_1 \lor x_2$. We denote by $Z(E)$ the set of all central elements of $E$. In  \cite[Proposition 2.2]{DVU4} it is proved that, for any $x\in E$ and $e\in Z(E)$, $x \land e \in E$, $x \land e^\sim \in E$, $f_e(x) = (x\land e, x \land e^\sim)$ and $\pi_e:E \rightarrow [0,e]$ such that $\pi_e(x) = x\land e$ is a surjective ${\cal PE}$-homomorphism. Furthermore, in
\cite[Theorem 2.3]{DVU4}, it is proved that $\langle Z(E),\land, ^\sim, 0,1 \rangle$ is a Boolean algebra.  Let us notice that for each $e\in Z(E)$, $\theta_e = \{(x,y) \in E^2: x\land e = y\land e \}$ defines a congruence on $E$ such that $E/\theta_e \cong_{_{{\cal PE}}} [0,e]$. Let us consider the set $FC(E) = \{\theta_e: e\in Z(E)\}$. It is not very hard to see that for each $e_1, e_2 \in Z(E)$, $\theta_{e_1} \cap \theta_{e_2} = \theta_{e_1 \lor e_2}$. Moreover the ordered set $\langle FC(E), \subseteq \rangle$ defines a Boolean algebra $\langle FC(E), \cap, \lor, \neg, \Delta_E \nabla_E \rangle$ where, $\theta_{e_1} \lor \theta_{e_2} = \theta_{e_1 \land e_2}$, $\neg \theta_e = \theta_{e^\sim}$ and the function $e\mapsto \theta_e$ is an order reverse isomorphism from $Z(E)$ to $FC(E)$. We also note that the class operator $E \mapsto FC(E)$ defines a congruence operator over ${\cal PE}$ in the sense of Definition \ref{congoperator} and the class $Hom_{{\cal PE}_{FC}}$ (see Eq.(\ref{HOMAK})) can be described in the following way:
\begin{equation}\label{HOMPE}
Hom_{{\cal PE}_{FC}} = \bigcup_{E\in {\cal PE}}\{E \stackrel{f_e}{\rightarrow} [0,e]: f_e(x) = x\land e \hspace{0.2cm} and \hspace{0.2cm} e\in Z(E) \}.
\end{equation}
In \cite[Proposition 2.8]{DVU4} it is proved that: 
\begin{equation}\label{ZPE}
\mbox{for each $e \in Z(E)$ and $x\leq e$, $x\in Z([0,e])$ iff $x\in Z(E)$}.
\end{equation}
From Eq.(\ref{ZPE}), it immediately follows that $Hom_{{\cal PE}_{FC}}$ is closed by composition of ${\cal PE}$-homomorphisms and then ${\cal PE}_{FC} = \langle Ob({\cal PE}), Hom_{{\cal PE}_{FC}}\rangle$ defines a category. Let us notice that Eq.(\ref{ZPE}) also implies that if $E \stackrel{f_e}{\rightarrow} [0,e] \in Hom_{{\cal PE}_{FC}} $ and $\theta_a \in FC([0,e])$ then $[FC(f_e)](\theta_a) = f_e^*(\theta_a) = \{(x,y) \in E^2 : x\land a = y \land a \} \in FC(E)$.  Consequently, it is not very hard to see that $FC: {\cal PE}_{FC} \rightarrow Set$ is a presheaf. Thus, following Definition \ref{BOOLFACC}, we can refer to $FC(-)$ as a Boolean factor congruences presheaf. Taking into account the CBS type theorems for pseudo-effect algebras and effect algebras (see \cite[Theorem 6.3]{DVU4} and \cite[Theorem 1.6]{JEN} respectively) and the order reverse identification $Z(E) \cong FC(E)$ for each $E \in {\cal PE}$, the $CBS_{FC}$-property for these partial structures reads:
\begin{itemize}
\item[] A pseudo-effect algebra $E$ has the $CBS_{FC}$-property iff the following holds: Given a pseudo-effect algebra $F$, and $\theta_f \in FC(F)$ such that there is $\theta_e \in FC(E)$ with $E \cong_{_{{\cal PE}}} F/\theta_f$ and $F \cong_{_{{\cal PE}}} E/\theta_e$ it follows that $E \cong_{_{{\cal PE}}} F$.   
\end{itemize}
In \cite[Proposition 6.2]{DVU4} it is proved that if $E,F \in {\cal PE}$ and $h:E \rightarrow [0,f]$ is a ${\cal PE}$-isomorphism where $f\in Z(F)$ then, for each $e\in Z(E)$, $h(e) \in Z(F)$. This result and the order reverse identification $Z(E) \cong FC(E)$ allows us to establish the useful equivalence of the $CBS_{FC}$-property given in Theorem \ref{CBS1}. Finally, in \cite[Theorem 1.6]{JEN} and \cite[Theorem  6.3]{DVU4}, $\sigma$-completeness type conditions imposed on the center of an effect algebra or a pseudo-effect algebra imply $CBS_{FC}$-property in ${\cal E}$ and ${\cal PE}$ respectively. In this way, we have extended our abstract framework for the CBS-theorem to two partial algebraic structures.     
}
\end{example}

\end{document}